\providecommand{\U}[1]{\protect\rule{.1in}{.1in}}
\newtheoremstyle{plainsl}
  {8pt plus 2pt minus 4pt}
  {8pt plus 2pt minus 4pt}
  {\slshape}
  {0pt}
  {\bfseries}
  {.}
  {5pt plus 1pt minus 1pt}
  {}
\theoremstyle{plainsl}
  \newtheorem{theorem}{Theorem}[section]
  \newtheorem{proposition}[theorem]{Proposition}
  \newtheorem{lemma}[theorem]{Lemma}
  \newtheorem{corollary}[theorem]{Corollary}
\theoremstyle{definition}
  \newtheorem{definition}[theorem]{Definition}
  \newtheorem{example}[theorem]{Example}
 \theoremstyle{remark}
  \newtheorem{remark}[theorem]{Remark}
\newenvironment{statement}{\begin{quote}}{\end{quote}}
\let\sumnonlimits\sum
\let\prodnonlimits\prod
\let\cupnonlimits\bigcup
\let\capnonlimits\bigcap
\renewcommand{\sum}{\sumnonlimits\limits}
\renewcommand{\prod}{\prodnonlimits\limits}
\renewcommand{\bigcup}{\cupnonlimits\limits}
\renewcommand{\bigcap}{\capnonlimits\limits}
\newcommand{\kk}{\mathbf{k}}
\newcommand{\FF}{\mathbf{F}}
\newcommand{\NN}{\mathbb{N}}
\newcommand{\QQ}{\mathbb{Q}}
\newcommand{\id}{\operatorname{id}}
\newcommand{\Hom}{\operatorname{Hom}}
\newcommand{\End}{\operatorname{End}}
\newcommand{\conc}{\operatorname{conc}}
\newcommand{\Prim}{\operatorname{Prim}}
\newcommand{\oast}{\circledast}
\newcommand{\full}{\operatorname{full}}
\newcommand{\set}[1]{\left\{ #1 \right\}}
\newcommand{\tup}[1]{\left( #1 \right)}
\newcommand\arxiv[1]{\href{http://www.arxiv.org/abs/#1}{\texttt{arXiv:#1}}}
\newcommand{\bigoast}[1]{%
  \vcenter{\hbox{\scalebox{1}{$\oast$}}}%
}
\def\scal#1#2{\langle #1\mid #2 \rangle}
\newcommand{\kx}{\kk\left[x\right]}
\newcommand\rsetminus{\mathbin{\mathpalette\rsetminusaux\relax}}
\newcommand\rsetminusaux[2]{\mspace{-4mu}
  \raisebox{\rsmraise{#1}\depth}{\rotatebox[origin=c]{-20}{$#1\smallsetminus$}}
 \mspace{-4mu}
}
\newcommand\rsmraise[1]{%
  \ifx#1\displaystyle .8\else
    \ifx#1\textstyle .8\else
      \ifx#1\scriptstyle .6\else
        .45%
      \fi
    \fi
  \fi}
\newcommand{\calA}{{\mathcal A}}
\newcommand{\calU}{{\mathcal U}}
\newcommand{\N}{{\mathbb N}}
\newcommand{\Z}{{\mathbb Z}}
\newcommand{\Q}{{\mathbb Q}}
 \def\shuffle{\mathop{_{^{\sqcup\!\sqcup}}}}
\def\deg{\mathrm{deg}}
\newcommand{\poly}[2]{#1 \langle #2 \rangle}
\def\QY{\poly{\Q}{Y}}
\def\QY_0{\Q\left\langle{Y_0}\right\rangle}
 \newcommand{\calB}{{\cal B}}
\def\path{\rightsquigarrow}
\def\bv{\mid}
\def\deg{\mathop\mathrm{deg}\nolimits}
\def\dbinom#1#2{{#1\choose#2}}
\def\shuffle{\mathop{_{^{\sqcup\!\sqcup}}}} 
\def\2#1{\ifnum#1<10 0\fi\the#1}
\xdef\isodayandtime{
{\2\day-\2\month-\the\year\space\2{\count0}:%
\2{\count2}}}
\def\scal#1#2{\langle #1\bv#2 \rangle}
\def\ncp#1#2{#1\langle #2\rangle}
\def\ncs#1#2{#1\langle \!\langle #2\rangle \!\rangle}
\def\vt{\vartheta}
\newcommand{\arinj}{\ar@{_{(}->}} 
\newcommand{\arinjrev}{\ar@{^{(}->}} 
\newcommand{\arsurj}{\ar@{->>}} 
\newcommand{\arelem}{\ar@{|->}} 
\newcommand{\arback}{\ar@{<-}} 
\begin{document}
\title{Three variations on the linear independence of grouplikes in a coalgebra}
\author[1]{G\'erard H. E. Duchamp\thanks{gheduchamp@gmail.com}} 
\author[2]{Darij Grinberg\thanks{darijgrinberg@gmail.com}}
\author[1,3]{\\Vincel Hoang Ngoc Minh\thanks{minh@lipn.univ-paris13.fr}}
\affil[1]{\small{LIPN, Northern Paris University, Sorbonne Paris City, 93430 Villetaneuse, France}}
\affil[2]{\small{Drexel University, Korman Center, Room 291, 15 S 33rd Street,\break 
Philadelphia PA, 19104, USA}}
\affil[3]{\small{University of Lille, 1 Place D\'eliot, 59024 Lille, France}}
\date{July 3, 2026}
\maketitle
\begin{abstract}
\textbf{Abstract.}
The grouplike elements of a coalgebra over a field are
known to be linearly independent over said field.
Here we prove three variants of this result.
One is a generalization to coalgebras over a commutative ring
(in which case the linear independence has to be replaced by
a weaker statement).
Another is a stronger statement that holds (under stronger
assumptions) in a commutative bialgebra.
The last variant is a linear independence result for
characters (as opposed to grouplike elements) of a
bialgebra.
\end{abstract}

\newpage
\tableofcontents
\section{Introduction}


A classical result in the theory of coalgebras
(\cite[Proposition 3.2.1 b)]{Sweedl69},
\cite[Lemma 2.1.12]{Radford-Hopf},
\cite[Theorem 2.1.2 (i)]{Abe}) says that the grouplike elements
of a coalgebra over a field are linearly independent over said
field.
We shall prove three variants of this result.
The first variant (Theorem~\ref{thm.old1},
in Section~\ref{sect.dg1}) generalizes it to coalgebras over an
arbitrary commutative ring (at the expense of obtaining a
subtler claim than literal linear independence).
The second variant (Theorem~\ref{thm.1},
in Section~\ref{sect.dg2}) gives a stronger independence
claim under a stronger assumption (viz., that the coalgebra
is a commutative bialgebra, and that the grouplike elements
and their pairwise differences are regular).
The third variant (Theorem~\ref{thm.gen3} \textbf{(b)},
in Section~\ref{sect.gen3}) is a linear independence statement
in the dual algebra of a bialgebra;
namely, it claims that (again under certain conditions) a set
of characters of a bialgebra (i.e., algebra homomorphisms from
the bialgebra to the base ring) are linearly independent not
just over the base ring, but over a certain subalgebra of the
dual.
We discuss the connection between grouplike elements and
characters.


\subsection*{Acknowledgments}

We thank Jeremy Rickard for Example~\ref{exa.dual-tensor.not-tf} and Christophe Reutenauer for interesting (electronic) discussions about the Arrow~\ref{DualInjArr} and its link with rational closures.

DG thanks the Mathematisches Forschungsinstitut Oberwolfach for its hospitality at the time this paper was finished. This research was supported through the programme ``Oberwolfach Leibniz Fellows'' by the Mathematisches Forschungsinstitut Oberwolfach in 2020.

\section{Background}

\subsection{Notations and generalities}

\subsubsection{General conventions}

We shall study coalgebras and bialgebras. We refer to
the literature on Hopf algebras and bialgebras -- e.g., \cite[Chapter 1]{GriRei}
or \cite[Section III.11]{Bourbaki-Alg1} -- for these concepts.%
\footnote{We note that terminology is not entirely
standardized across the literature. What we call a
``coalgebra'', for example, is called a
``counital coassociative cogebra'' in
\cite[Section III.11]{Bourbaki-Alg1}.
What we call a ``bialgebra'' is called a ``bigebra''
in \cite[Section III.11]{Bourbaki-Alg1}.}

We let $\NN$ denote the set $\left\{0,1,2,\ldots\right\}$.

Rings are always associative and have unity (but are not
always commutative).

We fix a commutative ring $\kk$. All algebras, linear maps and tensor
signs that appear in the following are over $\kk$ unless specified otherwise.
The symbol $\Hom$ shall always stand for $\kk$-module homomorphisms.


\subsubsection{Algebras, coalgebras, bialgebras}

Recall that

\begin{itemize}

\item a \emph{$\kk$-algebra} can be defined as
      a $\kk$-module $A$ equipped
      with a $\kk$-linear map $\mu : A \otimes A \to A$ (called
      \emph{multiplication}) and a $\kk$-linear map
      $\eta : \kk \to A$ (called \emph{unit map})
      satisfying the associativity axiom
      (which says that
      $\mu \circ \left(\mu \otimes \id\right)
       = \mu \circ \left(\id \otimes \mu\right)$)
      and the unitality axiom
      (which says that
      $\mu \circ \left(\eta \otimes \id\right)$ and
      $\mu \circ \left(\id \otimes \eta\right)$ are the
      canonical isomorphisms from $\kk \otimes A$ and
      $A \otimes \kk$ to $A$).

\item a \emph{$\kk$-coalgebra} is defined as
      a $\kk$-module $C$ equipped
      with a $\kk$-linear map $\Delta : C \to C \otimes C$ (called
      \emph{comultiplication}) and a $\kk$-linear map
      $\epsilon : C \to \kk$ (called \emph{counit map})
      satisfying the coassociativity axiom
      (which says that
      $\left(\Delta \otimes \id\right) \circ \Delta
       = \left(\id \otimes \Delta\right) \circ \Delta$)
      and the counitality axiom
      (which says that
      $\left(\epsilon \otimes \id\right) \circ \Delta$ and
      $\left(\id \otimes \epsilon\right) \circ \Delta$ are the
      canonical isomorphisms from $C$ to $\kk \otimes C$ and
      $C \otimes \kk$).

\item a \emph{$\kk$-bialgebra} means a $\kk$-module $B$
      that is simultaneously a $\kk$-algebra and a $\kk$-coalgebra,
      with the property that the comultiplication $\Delta$ and
      the counit $\epsilon$ are $\kk$-algebra homomorphisms
      (where the $\kk$-algebra structure on $B \otimes B$ is the
      standard one, induced by the one on $B$).
      
\end{itemize}

(Note that Lie algebras are not considered to be $\kk$-algebras.)

The multiplication and the unit map of a $\kk$-algebra $A$
will always be denoted by $\mu_A$ and $\eta_A$.
Likewise, the comultiplication and the counit of a $\kk$-coalgebra
$C$ will always be denoted by $\Delta_C$ and $\epsilon_C$.
We will occasionally omit the subscripts when it is clear what they
should be (e.g., we will write $\Delta$ instead of $\Delta_C$
when it is clear that the only coalgebra we could possibly
be referring to is $C$).

\subsubsection{Convolution and the dual algebra of a coalgebra}

If $C$ is a $\kk$-coalgebra, and if $A$ is a $\kk$-algebra, then
the $\kk$-module $\Hom\left(C,A\right)$ itself becomes a
$\kk$-algebra using a multiplication operation known as
\emph{convolution}.
We denote it by $\oast$, and recall how it is defined:
For any two $\kk$-linear maps $f, g \in \Hom\left(C,A\right)$,
we have
\[
f \oast g = \mu_A \circ \left(f \otimes g\right) \circ \Delta_C
: C \to A.
\]
The map
$\eta_A \circ \epsilon_C : C \to A$ is a neutral element
for this operation $\oast$.

(Note that the operation $\oast$ is denoted by $\star$ in
\cite[Definition 1.4.1]{GriRei}.)

If $f$ is a $\kk$-linear map from a coalgebra $C$ to an
algebra $A$, and if $n\in \NN$, then $f^{\oast n}$
denotes the $n$-th power of $f$ with respect to convolution
(i.e., the $n$-th power of $f$ in the algebra
$\left(\Hom\left(C,A\right), \oast, \eta_A \circ \epsilon_C\right)$).

If $M$ is any $\kk$-module, then the dual $\kk$-module
$\Hom_{\kk} \left( M, \kk \right)$ shall be denoted by $M^\vee$.
Thus, if $C$ is a $\kk$-coalgebra, then its dual
$\kk$-module $C^\vee = \Hom\left(C, \kk\right)$ becomes
a $\kk$-algebra via the convolution product $\oast$.
The unity of this $\kk$-algebra $C^\vee$ is exactly
the counit $\epsilon$ of $C$.

\subsubsection{Grouplike elements}

One of the simplest classes of elements in a coalgebra
are the \emph{grouplike elements}:

\begin{definition}
An element $g$ of a $\kk$-coalgebra $C$ is said to be
\emph{grouplike} if it satisfies $\Delta\left(  g\right)  =g\otimes g$ and
$\epsilon\left(  g\right)  =1$.
\end{definition}

We reject the alternative definition of
``grouplike'' (preferred by some authors) that replaces the
``$\epsilon\left(  g\right)  =1$'' condition by the
weaker requirement ``$g\neq 0$''; this definition is
equivalent to ours when $\kk$ is
a field, but ill-behaved and useless when $\kk$ is
merely a commutative ring.

The following examples illustrate the notion of
grouplike elements in different $\kk$-bialgebras.

\begin{example}
\label{exa.grouplike.pol}
Consider the polynomial ring
$\kx$ in one variable $x$ over $\kk$.
Define two $\kk$-algebra homomorphisms
$\Delta : \kx \to \kx \otimes \kx$
and $\epsilon : \kx \to \kk$ by setting
\[
\Delta \left( x \right)
= x \otimes 1 + 1 \otimes x
\qquad \text{ and } \qquad
\epsilon \left( x \right) = 0 .
\]
Then, it is easy to check that the $\kk$-algebra
$\kx$, equipped with the comultiplication $\Delta$
and the counit $\epsilon$, is a $\kk$-bialgebra.
This is the ``standard'' $\kk$-bialgebra structure
on the polynomial ring $\kx$.
If $\kk$ is a reduced ring (i.e., a commutative
ring with no nonzero nilpotent elements), then
it is not hard to show that $1$ is the only
grouplike element of this $\kk$-bialgebra.
On the other hand, if $u \in \kk$ satisfies
$u^2 = 0$, then the element $1 + ux \in \kx$
is also grouplike.
\end{example}

\begin{example}
\label{exa.grouplike.q-pol}
Let $q \in \kk$. Consider the polynomial ring
$\kx$ in one variable $x$ over $\kk$.
Define two $\kk$-algebra homomorphisms
$\Delta_{\uparrow_q} : \kx \to \kx \otimes \kx$
and $\epsilon : \kx \to \kk$ by setting
\[
\Delta_{\uparrow_q} \left( x \right)
= x \otimes 1 + 1 \otimes x + q x \otimes x
\qquad \text{ and } \qquad
\epsilon \left( x \right) = 0 .
\]
Then, it is easy to check that the $\kk$-algebra
$\kx$, equipped with the comultiplication $\Delta_{\uparrow_q}$
and the counit $\epsilon$, is a $\kk$-bialgebra.
This $\kk$-bialgebra $\tup{\kx, \Delta_{\uparrow_q}, \epsilon}$
is known as the \emph{univariate $q$-infiltration bialgebra}\footnote{See \cite{CFL,Duchamp-Luque,Duchamp-MO} for the multivariate case.}.
Note the following special cases:

\begin{enumerate}

\item If $q = 0$, then the
univariate $q$-infiltration bialgebra
$\tup{\kx, \Delta_{\uparrow_q}, \epsilon}$ is the
``standard'' $\kk$-bialgebra $\tup{\kx, \Delta, \epsilon}$
from Example~\ref{exa.grouplike.pol}.
(Indeed, $\Delta_{\uparrow_q} = \Delta$ when $q = 0$.)

\item When $q$ is nilpotent, the
univariate $q$-infiltration bialgebra
$\tup{\kx, \Delta_{\uparrow_q}, \epsilon}$ is a
\emph{Hopf algebra} (see, e.g., \cite[Definition 1.4.6]{GriRei}
or \cite[Definition 7.1.1]{Radford-Hopf} for the
definition of this concept). In this case, the
antipode of this Hopf algebra sends $x$ to $-x / \left(1+qx\right)$.

If $\kk$ is a $\Q$-algebra, then we can say even more:
Assume that $\kk$ is a $\Q$-algebra and $q$ is
nilpotent.
Let $y$ be the element $\sum_{k=1}^\infty \dfrac{1}{k!} q^{k-1} x^k$
of $\kx$.
(This sum is actually finite, since $q$ is nilpotent.
It can be viewed as the result of evaluating the
formal power series $\dfrac{\exp\tup{tx} - 1}{t} \in \kx[[t]]$
at $t = q$.)
Consider the unique $\kk$-algebra homomorphism
$\Phi : \kx \to \kx$ that sends $x$ to $y$.
Then, $\Phi$ is an isomorphism from
the $\kk$-bialgebra $\tup{\kx, \Delta_{\uparrow_q}, \epsilon}$
to the $\kk$-bialgebra $\tup{\kx, \Delta, \epsilon}$
(defined in Example~\ref{exa.grouplike.pol}).

No such isomorphism exists in general when $\kk$
is not a $\Q$-algebra. For example, if $\kk$ is an
$\mathbb{F}_2$-algebra, and if $q \in \kk$ satisfies
$q \neq 0$ and $q^2 = 0$, then
the Hopf algebra $\tup{\kx, \Delta, \epsilon}$ has
the property that its antipode is the identity, but
the Hopf algebra $\tup{\kx, \Delta_{\uparrow_q}, \epsilon}$
does not have this property.
Since any $\kk$-bialgebra isomorphism between
Hopf algebras must respect their antipodes
(see, e.g., \cite[Corollary 1.4.27]{GriRei}), this
precludes any $\kk$-bialgebra isomorphism.
\end{enumerate}

Let us return to the general case (with $\kk$
and $q$ arbitrary).
From the above definitions of $\Delta_{\uparrow_q}$
and $\epsilon$, it is easy to see that
\[
\Delta_{\uparrow_q} \left( 1 + qx\right)
= \underbrace{\Delta_{\uparrow_q}\left(1\right)}_{= 1 \otimes 1}
+ q \underbrace{\Delta_{\uparrow_q}\left(x\right)}_{= x \otimes 1 + 1 \otimes x + q x \otimes x}
= \left( 1 + qx\right) \otimes \left( 1 + qx\right)
\]
and
\[
\epsilon \left( 1 + qx\right)
= \underbrace{\epsilon\left(1\right)}_{= 1}
+ q \underbrace{\epsilon\left(x\right)}_{= 0}
= 1 .
\]
Thus, the element $1 + qx$ of the
univariate $q$-infiltration bialgebra is grouplike.
The element $1$ is grouplike as well (in fact,
$1$ is grouplike in any $\kk$-bialgebra).
\end{example}

\begin{example}
\label{exa.grouplike.frob-q}
Let $p$ be a prime.
Let $\kk$ be a commutative $\mathbb{F}_p$-algebra.
Let $q \in \kk$.
Let $B$ be the quotient ring of the polynomial ring
$\kx$ by the ideal $I$ generated by $x^p$.
For any $f \in \kx$, we let
$\overline{f}$ denote the projection $f+I$ of $f$
onto this quotient $B$.
(Thus, $\overline{x}^p = \overline{x^p} = 0$.)
Define two $\kk$-algebra homomorphisms
$\Delta_{\uparrow_q} : B \to B \otimes B$
and $\epsilon : B \to \kk$ by setting
\[
\Delta_{\uparrow_q} \left( \overline{x} \right)
= \overline{x} \otimes 1 + 1 \otimes \overline{x} + q \overline{x} \otimes \overline{x}
\qquad \text{ and } \qquad
\epsilon \left( \overline{x} \right) = 0 .
\]
These homomorphisms are well-defined, because Freshman's Dream
(i.e., the property of the Frobenius homomorphism
to be a $\mathbb{F}_p$-algebra homomorphism) shows
that
\begin{align*}
\tup{\overline{x} \otimes 1 + 1 \otimes \overline{x} + q \overline{x} \otimes \overline{x}}^p
&= \tup{\overline{x} \otimes 1}^p + \tup{1 \otimes \overline{x}}^p + q^p \tup{\overline{x} \otimes \overline{x}}^p \\
&= \underbrace{\overline{x}^p}_{=0} \otimes 1 + 1 \otimes \underbrace{\overline{x}^p}_{=0} + q^p \underbrace{\overline{x}^p}_{=0} \otimes \underbrace{\overline{x}^p}_{=0} = 0
\end{align*}
in the commutative $\mathbb{F}_p$-algebra $B \otimes B$.
The $\kk$-algebra $B$, equipped with the comultiplication $\Delta_{\uparrow_q}$
and the counit $\epsilon$, is a $\kk$-bialgebra
-- actually a quotient of the
univariate $q$-infiltration bialgebra defined in
Example~\ref{exa.grouplike.q-pol}.
Unlike the latter, it is always a Hopf algebra
(whether or not $q$ is nilpotent).
Examples of grouplike elements in $B$ are
$1$, $1 + q\overline{x}$ and
$\tup{1 + q\overline{x}}^{-1}
= 1 - q\overline{x} + q^2\overline{x}^2 - q^3\overline{x}^3 \pm \cdots
+ \tup{-q}^{p-1} \overline{x}^{p-1}$.
\end{example}

\begin{example}
\label{exa.grouplike.gx}
Consider the polynomial ring $\kk\left[g,x\right]$
in two (commuting) variables $g$ and $x$ over $\kk$.
Let $B$ be the quotient ring of this ring by the ideal
$J$ generated by $gx$.
For any $f \in \kk\left[g,x\right]$, we let
$\overline{f}$ denote the projection $f+J$ of $f$
onto this quotient $B$.
Define a $\kk$-algebra homomorphism
$\Delta : B \to B \otimes B$
by
\[
\Delta\left(\overline{g}\right) = \overline{g} \otimes \overline{g}
\qquad \text{ and } \qquad
\Delta\left(\overline{x}\right) = \overline{x} \otimes 1 + 1 \otimes \overline{x} ,
\]
and define a $\kk$-algebra homomorphism
$\epsilon : B \to \kk$ by
\[
\epsilon\left(\overline{g}\right) = 1
\qquad \text{ and } \qquad
\epsilon\left(\overline{x}\right) = 0 .
\]
(It is straightforward to see that both of these
homomorphisms are well-defined, since
$\left(\overline{g} \otimes \overline{g}\right)
\left(\overline{x} \otimes 1 + 1 \otimes \overline{x}\right)
= 0$ and $1 \cdot 0 = 0$.)
These homomorphisms (taken as comultiplication and
counit) turn the $\kk$-algebra $B$ into a
$\kk$-bialgebra. Its element $\overline{g}$ is grouplike,
and so are all its powers $\overline{g}^i$.
\end{example}

\begin{example}\label{co-Hadamard}
Let $M$ be a monoid, and let $\tup{\kk[M], \mu_M, 1_M}$ be the monoid algebra of $M$.
As a $\kk$-module, $\kk[M]$ is $\kk^{(M)}$, the set of finitely supported functions $M\to \kk$.
As usual, we identify each $w \in M$ with the indicator function $\delta_w \in \kk^{(M)} = \kk[M]$ that sends $w$ to $1_\kk$ and all other elements of $M$ to $0$.
Thus, $M$ becomes a basis of $\kk[M]$.
The multiplication on $\kk[M]$ is given by $\kk$-bilinearly extending the multiplication of $M$ from this basis to the entire $\kk$-module $\kk[M]$.
Thus, $\kk[M]$ becomes a $\kk$-algebra, with unity equal to the neutral element of $M$ (or, more precisely, its indicator function).

The $\kk$-module dual $\tup{\kk[M]}^\vee$ of $\kk[M]$ can be identified with $\kk^M$, the set of all functions $M\to \kk$, via the natural $\kk$-bilinear pairing between $\kk^{M}$ and $\kk^{(M)}$ given by
\begin{equation}\label{main_pairing_M}
\scal{S}{P}=\sum_{w\in M}S(w)P(w)
\qquad \text{ for all $S \in \kk^{M}$ and $P \in \kk[M] = \kk^{(M)}$.}
\end{equation}

The $\kk$-linear map $\Delta_{\odot} : \kk[M] \to \kk[M] \otimes \kk[M]$ that sends each $w \in M$ to $w \otimes w$ is a $\kk$-algebra homomorphism; it is called the diagonal comultiplication.
The $\kk$-linear map $\epsilon : \kk[M] \to \kk$ sending each $w \in M$ to $1_\kk$ is a $\kk$-algebra homomorphism as well.
Equipping $\kk[M]$ with these two maps $\Delta_{\odot}$ and $\epsilon$, we obtain a $\kk$-bialgebra $\calB = \tup{\kk[M],\mu_M,1_M,\Delta_{\odot},\epsilon}$, in which every $w\in M$ is grouplike.
More generally, a $\kk$-linear combination $\sum_{w \in M} a_w w$ (with $a_w \in \kk$) is grouplike in $\calB$ if and only if the $a_w$ are orthogonal idempotents (i.e., satisfy $a_w^2 = a_w$ for all $w \in M$, and $a_v a_w = 0$ for any distinct $v, w \in M$) and satisfy $\sum_{w \in M} a_w = 1$.
We call $\calB$ the \emph{monoid bialgebra} of $M$.

If $M$ is a group, then $\calB$ is a Hopf algebra.
The converse is also true when $\kk$ is nontrivial.

We remark that there is some overlap between monoid bialgebras and $q$-infiltration bialgebras.
Indeed, let $M$ be the monoid $\left\{x^n \mid n \in \NN\right\}$ of all monomials in one variable $x$.
Let $q \in \kk$, and let
$\tup{\kx, \Delta_{\uparrow_q}, \epsilon}$ be the univariate $q$-infiltration bialgebra as in Example \ref{exa.grouplike.q-pol}.
Then, the $\kk$-algebra homomorphism $\kx \to \kk[M]$ that sends $x$ to $1 + qx$ is a homomorphism of $\kk$-bialgebras from the univariate $q$-infiltration bialgebra $\tup{\kx, \Delta_{\uparrow_q}, \epsilon}$ to the monoid bialgebra
$\calB = \tup{\kk[M],\mu_M,1_M,\Delta_{\odot},\epsilon}$.
When $q$ is invertible, this homomorphism is an isomorphism.
\end{example}

\section{\label{sect.dg1}Grouplikes in a coalgebra}

Let us first state a simple fact in commutative algebra:

\begin{theorem}
\label{thm.old1}Let $g_{1},g_{2},\ldots,g_{n}$ be $n$ elements of a
commutative ring $A$. Let $c_{1},c_{2},\ldots,c_{n}$ be $n$ further elements
of $A$. Assume that
\begin{equation}
\sum_{i=1}^{n}c_{i}g_{i}^{k}=0\ \ \ \ \ \ \ \ \ \ \text{for all }
k\in \NN. \label{eq.thm.old1.ass}
\end{equation}
Then,
\begin{equation}
c_{i}\prod_{j\neq i}\left(  g_{i}-g_{j}\right)
=0\ \ \ \ \ \ \ \ \ \ \text{for all }i\in\left\{  1,2,\ldots,n\right\}  .
\label{eq.thm.old1.claim}
\end{equation}

\end{theorem}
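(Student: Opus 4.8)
The plan is to prove the identity \eqref{eq.thm.old1.claim} by exhibiting, for each fixed $i$, an explicit $A$-linear combination of the equations \eqref{eq.thm.old1.ass} (over $k = 0, 1, \ldots, n-1$) whose left-hand side collapses to $c_i \prod_{j \neq i}(g_i - g_j)$. The natural tool is a Vandermonde/Lagrange-interpolation argument carried out formally over the commutative ring $A$. First I would reduce to showing that there exist coefficients $\lambda_0, \lambda_1, \ldots, \lambda_{n-1} \in A$ such that $\sum_{k=0}^{n-1} \lambda_k g_m^k = \delta_{i,m} \prod_{j \neq i}(g_i - g_j)$ for every $m \in \{1, 2, \ldots, n\}$; granting this, multiplying the $k$-th hypothesis by $\lambda_k$ and summing over $k$ yields $0 = \sum_{m=1}^n c_m \sum_{k=0}^{n-1} \lambda_k g_m^k = c_i \prod_{j \neq i}(g_i - g_j)$, which is exactly \eqref{eq.thm.old1.claim}.

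The coefficients $\lambda_k$ are the coefficients of the polynomial $P_i(T) := \prod_{j \neq i}(T - g_j) \in A[T]$. Indeed, $P_i$ is monic of degree $n-1$, so $P_i(T) = \sum_{k=0}^{n-1} \lambda_k T^k$ with $\lambda_k \in A$; evaluating at $T = g_m$ gives $P_i(g_m) = \prod_{j \neq i}(g_m - g_j)$, which vanishes whenever $m \neq i$ (since then the factor with $j = m$ is zero) and equals $\prod_{j \neq i}(g_i - g_j)$ when $m = i$. This is precisely the interpolation property needed above. So the second step is just to spell out this evaluation and plug it into the linear combination of the hypotheses.

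The only subtlety — and it is mild — is bookkeeping: the hypothesis \eqref{eq.thm.old1.ass} is indexed by all $k \in \NN$, but we only use $k \in \{0, 1, \ldots, n-1\}$, and one must be careful that the "diagonal" term $m = i$ is the sole survivor. There is no genuine obstacle here, since no division is performed and everything takes place in $A[T]$ and then specializes; in particular the argument never requires $A$ to be a field or a domain, nor the $g_i - g_j$ to be regular. I would therefore expect the write-up to be short: define $P_i$, read off its coefficients $\lambda_k$, form $\sum_{k=0}^{n-1} \lambda_k \cdot \left(\sum_{m=1}^n c_m g_m^k\right) = 0$, interchange the two sums, and recognize the inner sum $\sum_{k=0}^{n-1} \lambda_k g_m^k$ as $P_i(g_m)$. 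The one place to be slightly careful in the exposition is to treat the edge case $n = 1$ (where the empty product $\prod_{j \neq 1}(g_1 - g_j)$ is $1$, and the claim $c_1 = 0$ follows already from the $k = 0$ hypothesis), though it is in fact covered by the general argument with $P_1(T) = 1$.
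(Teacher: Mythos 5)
Your proof is correct. It arrives at the same key computation as the paper---evaluating $\prod_{j\neq i}\left(T-g_{j}\right)$ at each $g_{m}$ and observing that only the $m=i$ term survives---but by a genuinely more direct route. The paper forms the power series identity $\sum_{i}c_{i}\left(1-tg_{i}\right)^{-1}=0$ in $A\left[\left[t\right]\right]$, clears denominators to get $\sum_{i}c_{i}\prod_{j\neq i}\left(1-tg_{j}\right)=0$ in $A\left[t\right]$, passes to the Laurent polynomial ring via $t\mapsto t^{-1}$, multiplies by $t^{n-1}$ to obtain $\sum_{i}c_{i}\prod_{j\neq i}\left(t-g_{j}\right)=0$, and only then substitutes $t=g_{h}$. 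You instead read off the coefficients $\lambda_{0},\lambda_{1},\ldots,\lambda_{n-1}$ of $P_{i}\left(T\right)=\prod_{j\neq i}\left(T-g_{j}\right)$ and take the corresponding finite $A$-linear combination of the hypotheses, which collapses at once to $\sum_{m}c_{m}P_{i}\left(g_{m}\right)=c_{i}\prod_{j\neq i}\left(g_{i}-g_{j}\right)=0$. Your version buys elementarity: it avoids the auxiliary rings $A\left[\left[t\right]\right]$ and $A\left[t,t^{-1}\right]$ and the attendant bookkeeping (checking that the Laurent expression is really a polynomial before substituting a possibly non-invertible element), and it makes explicit that only the hypotheses for $k\in\left\{0,1,\ldots,n-1\right\}$ are needed, a slight sharpening of the statement. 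What the paper's generating-function formulation buys is continuity with the rest of the article: essentially the same $t\mapsto t^{-1}$ and Laurent-polynomial machinery is reused in the proof of Theorem~\ref{thm.1}, where the coefficients $b_{i}$ live in the bialgebra itself and the rational functions $Q_{i}\left(t\right)/\left(1-tg_{i}\right)^{m_{i}+1}$ genuinely require the power-series setting.
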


\begin{proof}
Consider the ring $A\left[  \left[  t\right]  \right]  $ of formal power
series in one variable $t$ over $A$.
In this ring, we have
\begin{align*}
\sum_{i=1}^{n}c_{i}\cdot\underbrace{\dfrac{1}{1-tg_{i}}}_{=\sum_{k\in
 \NN}\left(  tg_{i}\right)  ^{k}}  &  =\sum_{i=1}^{n}c_{i}\cdot
\sum_{k\in \NN}
\underbrace{\left(  tg_{i}\right)  ^{k}}_{=t^{k}g_{i}^{k}}
=\sum_{i=1}^{n}c_{i}\cdot\sum_{k\in \NN}t^{k}g_{i}^{k} =\sum
_{k\in \NN}\underbrace{\sum_{i=1}^{n}c_{i}g_{i}^{k}}%
_{\substack{=0\\\text{(by (\ref{eq.thm.old1.ass}))}}} t^k = 0.
\end{align*}
Multiplying this equality by $\prod_{j=1}^{n}\left(  1-tg_{j}\right)  $, we
obtain
\begin{equation}
\sum_{i=1}^{n}c_{i}\prod_{j\neq i}\left(  1-tg_{j}\right)  =0.
\label{pf.thm.old1.polyeq}
\end{equation}
This is an equality in the polynomial ring $A\left[  t\right]  $ (which is a
subring of $A\left[  \left[  t\right]  \right]  $).

Now consider the ring $A\left[t, t^{-1}\right]$ of Laurent
polynomials in $t$ over $A$. There is an $A$-algebra
homomorphism $A\left[  t\right]  \rightarrow A\left[ t, t^{-1}\right]
,\ t\mapsto t^{-1}$ (by the universal property of $A\left[t\right]$).
Applying this homomorphism to both sides of the equality
\eqref{pf.thm.old1.polyeq}, we obtain
\[
\sum_{i=1}^{n}c_{i}\prod_{j\neq i}\left(  1-t^{-1}g_{j}\right)  =0.
\]
Hence,
\begin{align}
\sum_{i=1}^{n}c_{i}\prod_{j\neq i}
 \underbrace{\left(  t-g_{j}\right)}_{= t \left(1 - t^{-1} g_j\right)}
&= \sum_{i=1}^{n}c_{i}\prod_{j\neq i} \left( t \left(1 - t^{-1} g_j\right) \right)
 = \sum_{i=1}^{n}c_{i} t^{n-1} \prod_{j\neq i} \left(1 - t^{-1} g_j\right)
 \nonumber \\
&= t^{n-1} \underbrace{\sum_{i=1}^{n}c_{i}\prod_{j\neq i}\left(  1-t^{-1}g_{j}\right)}_{=0}
 = 0.
\label{pf.thm.old1.4}
\end{align}
This is an equality in the polynomial ring $A\left[  t\right]  $
(which is a subring of $A\left[ t, t^{-1}\right]$); hence, we
can substitute arbitrary values for $t$ in it.

Now, fix $h\in\left\{  1,2,\ldots,n\right\}  $. Substitute $g_h$ for $t$ in
the equality (\ref{pf.thm.old1.4}). The result is
\[
\sum_{i=1}^{n}c_{i}\prod_{j\neq i}\left(  g_{h}-g_{j}\right)  =0.
\]
Hence,
\begin{align*}
0  &  =\sum_{i=1}^{n}c_{i}\prod_{j\neq i}\left(  g_{h}-g_{j}\right)
=c_{h}\prod_{j\neq h}\left(  g_{h}-g_{j}\right)  +\sum_{\substack{i\in\left\{
1,2,\ldots,n\right\}  ;\\i\neq h}}c_{i}\underbrace{\prod_{j\neq i}\left(
g_{h}-g_{j}\right)  }_{\substack{=0\\\text{(since }g_{h}-g_{h}=0\text{ is
among}\\\text{the factors of this product)}}}\\
& \qquad \qquad
\left(\text{here, we have split off the addend for $i=h$ from the sum}\right)
\\
&= c_{h}\prod_{j\neq h}\left(  g_{h}-g_{j}\right)
+ \underbrace{ \sum_{\substack{i\in\left\{
1,2,\ldots,n\right\}  ;\\i\neq h}}c_{i} 0}_{=0}
 =c_{h}\prod_{j\neq h}\left(  g_{h}-g_{j}\right)  .
\end{align*}
Now forget that we fixed $h$. Thus, we have shown that $c_{h}\prod_{j\neq
h}\left(  g_{h}-g_{j}\right)  =0$ for all $h\in\left\{  1,2,\ldots,n\right\}
$. Renaming $h$ as $i$, we obtain (\ref{eq.thm.old1.claim}). Theorem
\ref{thm.old1} is proven.
\end{proof}

Our first main result is now the following:

\begin{theorem}
\label{thm.old2}
Let $g_{1},g_{2},\ldots,g_{n}$ be $n$ grouplike elements of a
$\kk$-coalgebra $C$. Let $c_{1},c_{2},\ldots,c_{n}\in \kk$.
Assume that $\sum_{i=1}^{n}c_{i}g_{i}=0$. Then,
\[
c_{i}\prod_{j\neq i}\left(  g_{i}-g_{j}\right)  =0\ \ \ \ \ \ \ \ \ \ \text{in
}\operatorname*{Sym}C\text{ for all }i\in\left\{  1,2,\ldots,n\right\}  .
\]
Here, $\operatorname*{Sym}C$ denotes the symmetric algebra of the
$\kk$-module $C$.
\end{theorem}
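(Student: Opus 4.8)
The plan is to reduce Theorem~\ref{thm.old2} to Theorem~\ref{thm.old1}, applied to the commutative ring $A = \operatorname{Sym} C$. For this we must manufacture, out of the single relation $\sum_{i=1}^n c_i g_i = 0$, the whole family of relations $\sum_{i=1}^n c_i g_i^k = 0$ in $\operatorname{Sym} C$ for all $k \in \NN$, where $g_i^k$ denotes the $k$-th power in the commutative algebra $\operatorname{Sym} C$ and each $c_i$ is understood via the canonical ring homomorphism $\kk \to \operatorname{Sym} C$. The engine is the fact that a grouplike element is ``set-like'' under iterated comultiplication: writing $\Delta^{(k-1)} : C \to C^{\otimes k}$ for the $(k-1)$-fold iterated comultiplication (with the conventions $\Delta^{(-1)} = \epsilon_C$, $\Delta^{(0)} = \id_C$, and $\Delta^{(k-1)} = \tup{\Delta_C \otimes \id_C^{\otimes (k-2)}} \circ \Delta^{(k-2)}$ for $k \geq 2$, which is well-defined by coassociativity), a straightforward induction on $k$ using $\Delta_C(g) = g \otimes g$ and $\epsilon_C(g) = 1$ shows that $\Delta^{(k-1)}(g) = g^{\otimes k}$ for every grouplike $g \in C$.

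First I would introduce, for each $k \in \NN$, the $\kk$-linear map $m_k : C^{\otimes k} \to \operatorname{Sym} C$ sending a pure tensor $v_1 \otimes v_2 \otimes \cdots \otimes v_k$ to the product $v_1 v_2 \cdots v_k$ computed in $\operatorname{Sym} C$ (with $m_0 : \kk \to \operatorname{Sym} C$ taken to be the structure map, and $m_1 : C \to \operatorname{Sym} C$ the canonical inclusion). Composing, I obtain a $\kk$-linear map $m_k \circ \Delta^{(k-1)} : C \to \operatorname{Sym} C$, and by the previous paragraph this map sends each grouplike $g_i$ to $m_k\tup{g_i^{\otimes k}} = g_i^k$. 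Since $m_k \circ \Delta^{(k-1)}$ is $\kk$-linear, applying it to the hypothesis $\sum_{i=1}^n c_i g_i = 0$ gives
\[
\sum_{i=1}^n c_i g_i^k = \tup{m_k \circ \Delta^{(k-1)}}\tup{\sum_{i=1}^n c_i g_i} = \tup{m_k \circ \Delta^{(k-1)}}(0) = 0
\]
in $\operatorname{Sym} C$. (For $k = 0$ this reads $\sum_{i=1}^n c_i = 0$, which is also what one gets directly by applying $\epsilon_C$ to the hypothesis; for $k = 1$ it is the original relation.)

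Now Theorem~\ref{thm.old1}, applied to the commutative ring $A = \operatorname{Sym} C$, to the elements $g_1, g_2, \ldots, g_n \in \operatorname{Sym} C$, and to the scalars $c_1, c_2, \ldots, c_n$ viewed in $\operatorname{Sym} C$, has its hypothesis \eqref{eq.thm.old1.ass} verified by the previous step, and hence yields $c_i \prod_{j \neq i}(g_i - g_j) = 0$ in $\operatorname{Sym} C$ for all $i$, which is exactly the assertion of Theorem~\ref{thm.old2}. The only non-formal ingredient is the well-definedness of the iterated comultiplication $\Delta^{(k-1)}$ and the identity $\Delta^{(k-1)}(g) = g^{\otimes k}$ for grouplike $g$; these are standard coalgebra facts proved by induction on $k$ from coassociativity and counitality, and I expect them — together with cleanly handling the degenerate cases $k = 0$ and $k = 1$ — to be the main (minor) obstacle in writing the argument out.
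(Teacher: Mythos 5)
Your proposal is correct and follows essentially the same route as the paper's proof: establish $\Delta^{(k-1)}(g_i) = g_i^{\otimes k}$ by induction, apply the iterated comultiplication to the relation $\sum_i c_i g_i = 0$, push the result into $\operatorname{Sym} C$ via the canonical multiplication/projection map to obtain $\sum_i c_i g_i^k = 0$ for all $k$, and invoke Theorem~\ref{thm.old1} with $A = \operatorname{Sym} C$. The only (immaterial) difference is that you compose the projection with $\Delta^{(k-1)}$ before applying it, while the paper applies the two maps in succession.
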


Our proof of this theorem will rely on a certain family of
maps defined for any $\kk$-coalgebra:

\begin{definition} \label{def.iterated-com}
Let $C$ be a $\kk$-coalgebra.
We then define a sequence of $\kk$-linear maps
$\Delta^{\left(-1\right)} ,
\Delta^{\left(0\right)} ,
\Delta^{\left(1\right)} , \ldots$, where
$\Delta^{\left(k\right)}$ is a $\kk$-linear map
from $C$ to $C^{\otimes \left(k+1\right)}$ for any integer $k \geq -1$.
Namely, we define them recursively by setting
$\Delta^{\left(-1\right)} = \epsilon$
and $\Delta^{\left(0\right)} = \id_C$
and $\Delta^{\left(k\right)} = \left(\id_C \otimes \Delta^{\left(k-1\right)}\right) \circ \Delta$
for all $k \geq 1$.
These maps $\Delta^{\left(  k\right)  }$ are known as the
\emph{iterated comultiplications} of $C$, and are studied
further, e.g., in \cite[Exercise 1.4.20]{GriRei}. 
\end{definition}

\begin{proof}[Proof of Theorem~\ref{thm.old2}.]
Every grouplike element $g\in C$ and every $k\in \NN$
satisfy
\begin{equation}
\Delta^{\left(  k-1\right)  }g=g^{\otimes k} \label{pf.thm.old2.Delkg}
\end{equation}
(where $g^{\otimes k}$ means $\underbrace{g\otimes g\otimes\cdots\otimes
g}_{k\text{ times}}\in C^{\otimes k}$). Indeed, this can be easily proven by
induction on $k$.

The $g_{i}$ (for all $i \in \left\{1,2,\ldots,n\right\}$)
are grouplike. Thus, (\ref{pf.thm.old2.Delkg}) (applied to $g = g_i$)
shows that $\Delta^{\left(  k-1\right)  }g_{i}=g_{i}^{\otimes k}$
for each $k\in \NN$ and each $i \in \left\{1,2,\ldots,n\right\}$.
Hence, applying the $\kk$-linear map $\Delta^{\left(  k-1\right)  }$ to
both sides of the equality $\sum_{i=1}^{n}c_{i}g_{i}=0$, we obtain
\begin{align}
\sum_{i=1}^{n}c_{i}g_{i}^{\otimes k}=0
\label{pf.thm.old2.tensor-sum}
\end{align}
for each $k\in \NN$.

But recall that the symmetric algebra $\operatorname*{Sym}C$ is
defined as a quotient of the tensor algebra
$T\left(  C\right)  $. Hence, there is a canonical projection
from $T\left(  C\right)$ to $\operatorname*{Sym}C$ that sends
each tensor $x_1 \otimes x_2 \otimes \cdots \otimes x_m
\in T\left(  C\right)$ to $x_1 x_2 \cdots x_m \in
\operatorname*{Sym}C$. In particular, this projection sends
$g_{i}^{\otimes k}$ to $g_{i}^{k}$ for each
$i \in \left\{1,2,\ldots,n\right\}$ and each $k \in \NN$.
Thus, applying this projection to both sides of
\eqref{pf.thm.old2.tensor-sum}, we obtain
$\sum_{i=1}^{n}c_{i}g_{i}^{k}=0$ in $\operatorname*{Sym}C$ for all
$k\in \NN$. Thus, Theorem \ref{thm.old1} can be
applied to $A=\operatorname*{Sym}C$.
We conclude that
\[
c_{i}\prod_{j\neq i}\left(  g_{i}-g_{j}\right)  =0\ \ \ \ \ \ \ \ \ \ \text{in
}\operatorname*{Sym}C\text{ for all }i\in\left\{  1,2,\ldots,n\right\}  .
\]
This proves Theorem \ref{thm.old2}.
\end{proof}

From Theorem \ref{thm.old2}, we obtain the following classical fact
\cite[Lemma 2.1.12]{Radford-Hopf}:

\begin{corollary}
\label{cor.old2-field}Assume that $\kk$ is a field. Let
$g_{1},g_{2},\ldots,g_{n}$ be $n$ distinct grouplike elements
of a $\kk$-coalgebra $C$.
Then, $g_{1},g_{2},\ldots,g_{n}$ are $\kk$-linearly independent.
\end{corollary}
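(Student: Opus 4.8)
The plan is to derive Corollary~\ref{cor.old2-field} directly from Theorem~\ref{thm.old2}. Suppose $g_1, g_2, \ldots, g_n$ are $n$ distinct grouplike elements of the $\kk$-coalgebra $C$, and suppose $c_1, c_2, \ldots, c_n \in \kk$ satisfy $\sum_{i=1}^n c_i g_i = 0$. I must show that all $c_i$ are zero. Theorem~\ref{thm.old2} applies and yields $c_i \prod_{j \neq i} (g_i - g_j) = 0$ in $\operatorname{Sym} C$ for each $i$.

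First I would fix an index $i \in \{1, 2, \ldots, n\}$ and work inside the symmetric algebra $\operatorname{Sym} C$, which is a $\kk$-algebra and in particular a $\kk$-module. Since $\kk$ is a field, $C$ is a $\kk$-vector space; the elements $g_i - g_j$ for $j \neq i$ are all nonzero (because the $g_j$ are distinct), so each can be extended to a basis of $C$, and hence each is a nonzero element of degree $1$ in $\operatorname{Sym} C$. The key algebraic point is that $\operatorname{Sym} C$ is an integral domain: indeed, $C$ being a $\kk$-vector space is free, so $\operatorname{Sym} C$ is a polynomial ring over the field $\kk$ in a (possibly infinite) set of variables, and such a polynomial ring has no zero divisors. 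Therefore the product $\prod_{j \neq i} (g_i - g_j)$, being a product of nonzero elements of the domain $\operatorname{Sym} C$, is itself nonzero.

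From $c_i \prod_{j \neq i} (g_i - g_j) = 0$ in the domain $\operatorname{Sym} C$ and $\prod_{j \neq i}(g_i - g_j) \neq 0$, I conclude $c_i = 0$ (viewing $c_i$ as the scalar $c_i \cdot 1 \in \operatorname{Sym} C$, and noting that $c_i \mapsto c_i \cdot 1_{\operatorname{Sym} C}$ is injective since $\kk$ is a field and $\operatorname{Sym} C$ is nontrivial — or simply cancelling in the domain). Since $i$ was arbitrary, all $c_i$ vanish, which is exactly the assertion that $g_1, g_2, \ldots, g_n$ are $\kk$-linearly independent.

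I expect no real obstacle here; the only points requiring a word of justification are that a symmetric algebra over a field is an integral domain (equivalently, a polynomial ring in possibly infinitely many indeterminates over a field is a domain — which reduces to the finitely-many-variables case since any finite set of elements lies in a finitely generated subalgebra) and that the factors $g_i - g_j$ are genuinely nonzero in degree $1$, which uses that distinct vectors in a vector space have nonzero difference. Everything else is immediate from Theorem~\ref{thm.old2}.
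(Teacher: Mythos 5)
Your proposal is correct and follows essentially the same route as the paper: apply Theorem~\ref{thm.old2}, observe that $\operatorname{Sym} C$ is an integral domain because $C$ is free over the field $\kk$ (so $\operatorname{Sym} C$ is a polynomial ring), note that the factors $g_i-g_j$ are nonzero, and cancel them to get $c_i=0$, using that $\kk$ embeds into $\operatorname{Sym} C$. The only difference is cosmetic — you spell out the reduction to finitely many variables and the injectivity of $C\hookrightarrow\operatorname{Sym} C$ in degree $1$, which the paper leaves implicit.
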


\begin{proof}
[Proof of Corollary \ref{cor.old2-field}.]The $\kk$-module $C$
is free (since $\kk$ is a field). Thus, the $\kk$-algebra
$\operatorname*{Sym}C$ is isomorphic to a polynomial algebra
over $\kk$,
and thus is an integral domain (again since $\kk$ is a field). 
Its elements
$g_{i}-g_{j}$ for $j\neq i$ are nonzero (since $g_{1},g_{2},\ldots,g_{n}$ are
distinct), and thus their products $\prod_{j\neq i}\left(  g_{i}-g_{j}\right)
$ (for $i\in\left\{  1,2,\ldots,n\right\}  $) are nonzero as well (since
$\operatorname*{Sym}C$ is an integral domain).

Let $c_{1},c_{2},\ldots,c_{n}\in \kk$ be such that $\sum_{i=1}^{n}
c_{i}g_{i}=0$. Then, Theorem \ref{thm.old2} yields
\[
c_{i}\prod_{j\neq i}\left(  g_{i}-g_{j}\right)  =0\ \ \ \ \ \ \ \ \ \ \text{in
}\operatorname*{Sym}C\text{ for all }i\in\left\{  1,2,\ldots,n\right\}  .
\]
We can cancel the $\prod_{j\neq i}\left(  g_{i}-g_{j}\right)  $ factors from
this equality (since these factors $\prod_{j\neq i}\left(  g_{i}-g_{j}\right)
$ are nonzero, and since $\operatorname*{Sym}C$ is an integral domain). Thus,
we obtain the equalities $c_{i}=0$ in $\operatorname*{Sym}C$ for all
$i\in\left\{  1,2,\ldots,n\right\}  $. In other words, $c_{i}=0$ in
$\kk$ for all $i\in\left\{  1,2,\ldots,n\right\}  $ (since $\kk$
embeds into $\operatorname*{Sym}C$).

Now, forget that we fixed $c_{1},c_{2},\ldots,c_{n}$. We thus have proven that
if $c_{1},c_{2},\ldots,c_{n} \in \kk$
are such that $\sum_{i=1}^{n}c_{i}g_{i}=0$, then
$c_{i}=0$ for all $i\in\left\{  1,2,\ldots,n\right\}  $. In other words,
$g_{1},g_{2},\ldots,g_{n}$ are $\kk$-linearly independent. This proves
Corollary \ref{cor.old2-field}.
\end{proof}

\begin{remark}
\label{rmk.old2-field.necc}
Example~\ref{exa.grouplike.q-pol} illustrates why
we required $\kk$ to be a field in
Corollary \ref{cor.old2-field}. Indeed, if $q \in \kk$
is a zero-divisor but nonzero, then the two grouplike
elements $1$ and $1 + qx$ of the $\kk$-bialgebra in
Example~\ref{exa.grouplike.q-pol} fail to be
$\kk$-linearly independent.
Theorem \ref{thm.old2}, however, provides a weaker
version of linear independence that is still
satisfied.
\end{remark}

\begin{remark}
A different generalization of Corollary~\ref{cor.old2-field}
to commutative rings was found by Mr\v cun in
\cite[Proposition 1.1]{Mrcun}.
We do not know whether it can be derived from our
Theorem \ref{thm.old2}.
\end{remark}

\section{\label{sect.dg2}Grouplikes over id-unipotents in a bialgebra}

\subsection{The notion of id-unipotence}

In this section, we shall use the following concept:

\begin{definition}\label{local_nilp}
Let $B$ be a $\kk$-bialgebra.
An element $b\in B$ is said to be \emph{id-unipotent} if there exists
some $m\in\mathbb{Z}$ such that every nonnegative integer $n>m$ satisfies
\begin{equation}
\left(  \eta\epsilon-\id\right)  ^{\oast n}\left(  b\right)
=0 \label{eq.locnil}
\end{equation}
(where $\id$ means $\id_B$). We
shall refer to such an $m$ as a \emph{degree-upper bound} of $b$.
\end{definition}

Before we move on to studying id-unipotent elements, let us show
several examples:

\begin{example}
\label{exam.id-unipotence.graded}
Let $B$ be a connected graded $\kk$-bialgebra.
(The word ``graded'' here means ``$\NN$-graded'', and it is assumed
that all structure maps $\mu, \eta, \Delta, \epsilon$ preserve the
grading. The word ``connected'' means that the $0$-th homogeneous
component $B_0$ is isomorphic to $\kk$.)
Then, each $b\in B$ is id-unipotent, and if $b\in B$ is
homogeneous of degree $k$, then $k$ is a degree-upper bound of $b$.
\end{example}

\begin{example}
\label{exam.id-unipotence.x}
Let $q$, $\kx$, $\Delta_{\uparrow_q}$ and $\epsilon$ be as in
Example~\ref{exa.grouplike.q-pol}.
Assume that $q$ is nilpotent, with $q^m = 0$ for some $m \in \NN$.
It is easy to see (by induction on $n$) that
$\tup{\eta \epsilon - \id}^{\oast n}\tup{x} = -(-q)^{n-1} x^n$
in $\kx$ for each $n \geq 1$.
Thus, for each $n > m$, we have
$\tup{\eta \epsilon - \id}^{\oast n}\tup{x} = -(-q)^{n-1} x^n = 0$
(since $q^m = 0$ and thus $(-q)^{n-1} = 0$).
Hence, the element $x$ is id-unipotent in $\kx$, with $m$
being a degree-upper bound of $x$.
Combining this observation with
Corollary~\ref{cor.L.subalg} further below, it follows
easily that every element of $\kx$ is id-unipotent
(albeit $m$ will not always be a degree-upper bound).
\end{example}

\begin{example}
\label{exam.id-unipotence.xquot}
Let $p$, $\kk$, $q$ and $B$ be as in Example~\ref{exa.grouplike.frob-q}.
It is easy to see (by induction on $n$) that
$\tup{\eta \epsilon - \id}^{\oast n}\tup{\overline{x}} = -(-q)^{n-1} \overline{x}^n$
for each $n \geq 1$.
Thus, for each $n \geq p$, we have
$\tup{\eta \epsilon - \id}^{\oast n}\tup{\overline{x}} = -(-q)^{n-1} \overline{x}^n = 0$
(since $\overline{x}^p = 0$ and thus $\overline{x}^n = 0$).
Hence, the element $\overline{x}$ is id-unipotent in $B$, with $p-1$
being a degree-upper bound of $\overline{x}$.
\end{example}

\begin{remark} \label{rmk.Deltaplusnilp}
The notion of id-unipotence is connected with the
classical notion of \emph{$\Delta_+$-nilpotence}, but it
is a weaker notion.
Let us briefly describe the latter notion and the connection.

Consider a $\kk$-bialgebra $B$ with comultiplication $\Delta$,
counit $\epsilon$ and unit map $\eta$.
Then, we have a canonical direct sum decomposition
$B= \kk 1_B\oplus B_+$ of the $\kk$-module $B$,
where $B_+=\ker(\epsilon)$ (and where $1_B = \eta(1_{\kk})$
is the unity of $B$).
The corresponding projections are
$\eta \epsilon$ (projecting $B$ onto $\kk 1_B$)
and $\id-\eta \epsilon$ (projecting $B$ onto $B_+$).
Thus, we set
$\overline{\id} = \id-\eta \epsilon : B \to B$
(this projection, in a way, ``eliminates the constant term'').
For each $k \in \NN$, we define a map
\[
\Delta_+^{\left(k\right)}
= \overline{\id}^{\otimes \left(k+1\right)} \circ \Delta^{\left(k\right)}
: B \to B^{\otimes \left(k+1\right)} ,
\]
where $\overline{\id}^{\otimes \left(k+1\right)}
= \overline{\id} \otimes \overline{\id} \otimes \cdots \otimes \overline{\id}$
(with $k+1$ tensorands) and where
$\Delta^{\left(k\right)} : B \to B^{\otimes \left(k+1\right)}$
is the iterated comultiplication
from Definition~\ref{def.iterated-com}.

Now, an element $b$ of $B$ is said to be \emph{$\Delta_+$-nilpotent} if there exists
some $m\in\mathbb{Z}$ such that every nonnegative integer $k>m$ satisfies
\begin{equation}
\Delta_+^{\left(k\right)} \left(  b\right) =0 .
\label{eq.locnil-Deltaplus}
\end{equation}

It is easy to see that every $n \geq 1$ satisfies
\[
\left(\eta\epsilon-\id\right)^{\oast n}=(-\overline{\id})^{\oast n}
=(-1)^n\mu^{\left(n-1\right)} \circ \Delta_+^{\left(n-1\right)},
\]
where $\mu^{\left(n-1\right)} : B^{\otimes n} \to B$ is the
``iterated multiplication'' map that sends each pure tensor
$b_1 \otimes b_2 \otimes \cdots \otimes b_n$ to
$b_1 b_2 \cdots b_n \in B$.
Thus, every $\Delta_+$-nilpotent element of $B$ is
id-unipotent.
The converse, however, is not true.
For instance,
the element $\overline{x}$ in Example~\ref{exam.id-unipotence.xquot}
is id-unipotent but (in general) not $\Delta_+$-nilpotent.
(But the elements $x$ in Example~\ref{exam.id-unipotence.x}
and $b$ in Example~\ref{exam.id-unipotence.graded} are
$\Delta_+$-nilpotent.)%
\footnote{Nevertheless, at least in one important case, the
two concepts are closely intertwined.
Namely, if $\kk$ is a $\Q$-algebra and $B$ is cocommutative,
then we have the following chain of equivalences:
\[
\tup{\text{every $b\in B$ is id-unipotent}}
\Longleftrightarrow
\tup{B=\calU(\Prim(B))}
\Longleftrightarrow
\tup{\text{every $b\in B$ is }\Delta_+\mbox{-nilpotent}} ,
\]
where ``$B=\calU(\Prim(B))$'' means that $B$ is the universal enveloping bialgebra
(see \cite[ch II \S 1]{Bourbaki-Lie1} for the definition) of its primitive elements.
The two equivalence signs follow easily from \cite[Theorem 1]{DMTCN14}
(noticing that a filtration as in \cite[Theorem 1 condition 1]{DMTCN14}
forces every $b \in B$ to be $\Delta_+$-nilpotent).
This is a variant of the well-known
Cartier--Quillen--Milnor--Moore theorem \cite{Cartier,MM65}
that relies on
(formal) convergence of infinite sums instead of primitive
generation of the Hopf algebra.}
\end{remark}


\subsection{The linear independence}

We shall also use a slightly generalized notion of linear independence:

\begin{definition}
Let $g_{1},g_{2},\ldots,g_{n}$ be some elements of a $\kk$-algebra $A$.
Let $S$ be a subset of $A$.

\begin{enumerate}

\item[\textbf{(a)}] We say that the elements $g_{1},g_{2},\ldots,g_{n}$ are
\emph{left $S$-linearly independent} if every $n$-tuple $\left(
s_{1},s_{2},\ldots,s_{n}\right)  \in S^{n}$ of elements of $S$ satisfying
$\sum_{i=1}^{n}s_{i}g_{i}=0$ must satisfy $\left(  s_{i}=0\text{ for all
}i\right)  $.

\item[\textbf{(b)}] The notion of ``\emph{right $S$-linearly
independent}'' is defined in the same way 
as ``left $S$-linearly independent'', except
that the sum $\sum_{i=1}^{n}s_{i}g_{i}$ is replaced by $\sum_{i=1}^{n}
g_{i}s_{i}$.

\item[\textbf{(c)}] If the $\kk$-algebra $A$ is commutative, then these two
notions are identical, and we just call them
``\emph{$S$-linearly independent}''.

\end{enumerate}
\end{definition}

We also recall that an element $a$ of a commutative
ring $A$ is said to be \emph{regular} if it is a
non-zero-divisor -- i.e., if it has the property that
whenever $b$ is an element of $A$ satisfying $ab = 0$,
then $b = 0$.

We can now state the second of our main theorems:

\begin{theorem}
\label{thm.1}
Let $B$ be a commutative $\kk$-bialgebra.
Let $L$ denote the set of all id-unipotent
elements of $B$.

Let $g_{1},g_{2},\ldots,g_{n}$ be $n$ grouplike elements of $B$.
Let us make two assumptions:
\begin{itemize}
\item \textit{Assumption 1:} For any $1 \leq i < j \leq n$, the element
$g_i - g_j$ of $B$ is regular. 
\item \textit{Assumption 2:} For any $1 \leq i \leq n$, the element
$g_i$ of $B$ is regular.
\end{itemize}
Then, $g_{1},g_{2},\ldots,g_{n}$ are $L$-linearly independent.
\end{theorem}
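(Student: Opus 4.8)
The plan is to deduce from the hypothesis $\sum_{i=1}^{n} s_i g_i = 0$ a polynomial identity of the same shape as the one used in the proof of Theorem~\ref{thm.old1} (but now with coefficients in $B$), and then to extract the conclusion using the two regularity assumptions. The first step is to observe that, because $B$ is commutative, for each $k\in\NN$ the $k$-th convolution power $\id_B^{\oast k}$ of $\id_B$ is a $\kk$-algebra endomorphism of $B$: indeed, for $k\geq 1$ we have $\id_B^{\oast k} = \mu^{(k-1)} \circ \Delta^{(k-1)}$, a composite of the iterated comultiplication $\Delta^{(k-1)}\colon B\to B^{\otimes k}$ (an algebra homomorphism since $B$ is a bialgebra) and the iterated multiplication $\mu^{(k-1)}\colon B^{\otimes k}\to B$ (an algebra homomorphism since $B$ is commutative), while for $k=0$ we have $\id_B^{\oast 0}=\eta_B\epsilon_B$, which is immediate. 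Since each $g_i$ is grouplike, $\Delta^{(k-1)}(g_i)=g_i^{\otimes k}$ and hence $\id_B^{\oast k}(g_i)=g_i^{k}$ for all $k$. Applying the algebra homomorphism $\id_B^{\oast k}$ to $\sum_{i=1}^{n} s_i g_i = 0$ therefore yields $\sum_{i=1}^{n} \id_B^{\oast k}(s_i)\, g_i^{k} = 0$ for every $k\in\NN$.

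The second step is to expand $\id_B^{\oast k}(s_i)$ as a polynomial in $k$. Writing $f = \eta_B\epsilon_B - \id_B \in \End(B)$ and using that $\eta_B\epsilon_B$ is the convolution unit, the binomial theorem gives $\id_B^{\oast k} = (\eta_B\epsilon_B - f)^{\oast k} = \sum_{j\geq 0} \binom{k}{j}(-1)^{j} f^{\oast j}$. Here id-unipotence enters: fixing a common degree-upper bound $m\geq 0$ of $s_1,\dots,s_n$ (possible since there are finitely many of them), we have $f^{\oast j}(s_i)=0$ for all $j>m$ and all $i$, so $\id_B^{\oast k}(s_i) = \sum_{j=0}^{m}\binom{k}{j}\,d_{i,j}$, where $d_{i,j} := (-1)^{j} f^{\oast j}(s_i)\in B$. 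Thus $\sum_{i=1}^{n}\sum_{j=0}^{m}\binom{k}{j}\,d_{i,j}\,g_i^{k}=0$ for all $k\in\NN$, and now I would copy the argument of Theorem~\ref{thm.old1}: pass to $B[[t]]$, multiply by $t^{k}$ and sum over $k$ (using $\sum_{k\geq 0}\binom{k}{j}z^{k}=z^{j}(1-z)^{-j-1}$) to get $\sum_{i=1}^{n}\sum_{j=0}^{m}d_{i,j}\,(g_i t)^{j}(1-g_i t)^{-j-1}=0$; multiply by $\prod_{l=1}^{n}(1-g_l t)^{m+1}$, which clears all denominators legitimately because $j\leq m$, obtaining an identity in $B[t]$; and apply the $B$-algebra homomorphism $t\mapsto t^{-1}$ into $B[t,t^{-1}]$ followed by multiplication by a sufficiently high power of $t$ (exactly as in the proof of Theorem~\ref{thm.old1}) to arrive at
\begin{equation}
\sum_{i=1}^{n} \Bigl( \sum_{j=0}^{m} d_{i,j}\, g_i^{j}\, (t-g_i)^{m-j} \Bigr) \prod_{l\neq i}(t-g_l)^{m+1} = 0 \qquad\text{in } B[t].
\label{pf.thm.1.plan.key}
\end{equation}

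The third step is to fix $h\in\{1,\dots,n\}$ and reduce \eqref{pf.thm.1.plan.key} modulo $(t-g_h)^{m+1}$. Every summand with $i\neq h$ is divisible by $(t-g_h)^{m+1}$, so only the $i=h$ summand survives; writing everything in powers of $u := t-g_h$, it reads $\bigl(\sum_{e=0}^{m}(d_{h,m-e}\,g_h^{m-e})\,u^{e}\bigr)\cdot\pi(u)\equiv 0\pmod{u^{m+1}}$, where $\pi(u)=\prod_{l\neq h}\bigl(u+(g_h-g_l)\bigr)^{m+1}$ has constant term $\pi(0)=\prod_{l\neq h}(g_h-g_l)^{m+1}$. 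By Assumption 1 (and since a product of regular elements is regular), $\pi(0)$ is regular. Comparing the coefficients of $u^{0},u^{1},\dots,u^{m}$ and solving the resulting triangular linear system, whose diagonal entries are multiplication by the regular element $\pi(0)$, forces $d_{h,m-e}\,g_h^{m-e}=0$ for $e=0,1,\dots,m$; cancelling the regular element $g_h^{m-e}$ (Assumption 2) gives $d_{h,j}=0$, that is $f^{\oast j}(s_h)=0$, for all $j\in\{0,\dots,m\}$. Together with $f^{\oast j}(s_h)=0$ for $j>m$, this gives $f^{\oast j}(s_h)=0$ for all $j\geq 0$; since $f^{\oast 0}-f^{\oast 1}=\eta_B\epsilon_B-(\eta_B\epsilon_B-\id_B)=\id_B$, we conclude $s_h=f^{\oast 0}(s_h)-f^{\oast 1}(s_h)=0$. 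As $h$ was arbitrary, this proves the theorem.

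I expect the main obstacle to be bookkeeping rather than conceptual difficulty: one must check carefully that $\id_B^{\oast k}$ really is an algebra homomorphism, and run the passage through $B[[t]]$ and the Laurent substitution $t\mapsto t^{-1}$ rigorously over an arbitrary commutative ring (no convergence input is needed, since after multiplying by $\prod_{l}(1-g_l t)^{m+1}$ everything is a genuine polynomial). The two ingredients that go beyond the proof of Theorem~\ref{thm.old1} are the commutativity of $B$, which makes $\id_B^{\oast k}$ multiplicative so that the powers $g_i^{k}$ appear, and the id-unipotence of the $s_i$, which truncates the binomial series so that the relevant generating functions are rational with denominators dividing $\prod_{l}(1-g_l t)^{m+1}$.
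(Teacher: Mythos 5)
Your proof is correct. Its skeleton coincides with the paper's: the observation that $\id_B^{\oast k}$ is a $\kk$-algebra homomorphism because $B$ is commutative, the use of id-unipotence to truncate the binomial expansion $\id_B^{\oast k}=\sum_{j}\binom{k}{j}(-1)^{j}f^{\oast j}$ so that $\sum_{k}\id_B^{\oast k}(s_i)t^{k}$ is rational with denominator a power of $1-t$, the passage through $B[[t]]$, the clearing of denominators by $\prod_{l}(1-g_l t)^{m+1}$, and the reversal $t\mapsto t^{-1}$ are all exactly the paper's steps. Where you genuinely diverge is the endgame. The paper only \emph{evaluates} the polynomial identity at $t=g_h$; after cancelling the regular factors (and invoking Lemma~\ref{lem.laurent-sub}) this yields the single equation $(\eta\epsilon-\id)^{\oast m_h}(b_h)=0$ for the top convolution power, so the paper must take each $m_i$ to be the \emph{smallest} degree-upper bound of $b_i$ and derive a contradiction with that minimality. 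You instead reduce the identity modulo $(t-g_h)^{m+1}$, retaining the coefficients of $u^{0},\dots,u^{m}$ in $u=t-g_h$ (evaluation at $t=g_h$ is just the $u^{0}$ coefficient), and the triangular system with regular diagonal $\prod_{l\neq h}(g_h-g_l)^{m+1}$ kills all of $f^{\oast 0}(s_h),\dots,f^{\oast m}(s_h)$ in one pass, whence $s_h=0$ directly. This buys you a direct (non-contradiction) argument with no minimality bookkeeping and no separate Laurent-substitution lemma, at the cost of the slightly heavier coefficient extraction; the two regularity assumptions are used in exactly the same places. One small point to write out explicitly: a common degree-upper bound $m\ge 0$ for $s_1,\dots,s_n$ exists because any integer larger than a degree-upper bound is again one (immediate from Definition~\ref{local_nilp}, but worth saying).
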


Before we prove this theorem, some remarks are in order.

\begin{remark}
\label{rmk.L.submodule}
The $L$ in Theorem~\ref{thm.1} is a $\kk$-submodule
of $B$. (This is easily seen directly: If $u \in B$
and $v \in B$ are id-unipotent with degree-upper
bounds $p$ and $q$, respectively, then $\lambda u
+ \mu v$ is id-unipotent with degree-upper
bound $\max\left\{p,q\right\}$ for any
$\lambda, \mu \in \kk$.)

It can be shown that $L$ is a $\kk$-subalgebra of $B$.
(See Corollary~\ref{cor.L.subalg} further below;
we will not need this to prove Theorem~\ref{thm.1}.)
In general, $L$ is not a $\kk$-subcoalgebra of $B$.
(See Example~\ref{exa.L.not-subcoalg} for a
counterexample.)
\end{remark}

\begin{remark}
\label{rmk.thm.1.gx}
Example~\ref{exa.grouplike.gx} illustrates why
we required Assumption 2 to hold in
Theorem~\ref{thm.1}. Indeed, in this example,
$\overline{g}$ is grouplike, and $\overline{x}$ is
id-unipotent (with degree-upper bound $1$), so
the equality
$\overline{x} \overline{g} = \overline{g} \overline{x} = 0$
shows that even the single grouplike element
$\overline{g}$ is not $L$-linearly independent.

The necessity of Assumption 1 can be illustrated by
Remark~\ref{rmk.old2-field.necc} again.
\end{remark}

To prove Theorem~\ref{thm.1}, we shall use a few lemmas.
The first one is a classical result \cite[Exercise 1.5.11(a)]{GriRei}:

\begin{lemma}
\label{lem.BtoA.bimap*bimap}Let $C$ be a $\kk$-bialgebra. Let $A$ be a
commutative $\kk$-algebra. Let $p:C\rightarrow A$ and $q:C\rightarrow
A$ be two $\kk$-algebra homomorphisms. Then, the map $p\oast
q:C\rightarrow A$ is also a $\kk$-algebra homomorphism.
\end{lemma}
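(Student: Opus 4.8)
The plan is to verify directly that $p \oast q$ respects multiplication and the unit. Recall that $p \oast q = \mu_A \circ (p \otimes q) \circ \Delta_C$. Since $p$ and $q$ are $\kk$-algebra homomorphisms, they send $1_C$ to $1_A$; combined with the fact that $\Delta_C(1_C) = 1_C \otimes 1_C$ (as $\Delta_C$ is a $\kk$-algebra homomorphism by the bialgebra axioms), this immediately gives $(p \oast q)(1_C) = \mu_A(1_A \otimes 1_A) = 1_A$, so the unit is preserved.

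For multiplicativity, I would fix $x, y \in C$ and use Sweedler notation, writing $\Delta_C(x) = \sum_{(x)} x_1 \otimes x_2$ and $\Delta_C(y) = \sum_{(y)} y_1 \otimes y_2$. Because $\Delta_C$ is a $\kk$-algebra homomorphism, $\Delta_C(xy) = \Delta_C(x) \Delta_C(y) = \sum_{(x),(y)} x_1 y_1 \otimes x_2 y_2$ (using the standard algebra structure on $C \otimes C$). Applying $\mu_A \circ (p \otimes q)$, we get $(p \oast q)(xy) = \sum_{(x),(y)} p(x_1 y_1) q(x_2 y_2) = \sum_{(x),(y)} p(x_1) p(y_1) q(x_2) q(y_2)$, where the last step uses that $p$ and $q$ are algebra homomorphisms. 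On the other hand, $(p \oast q)(x) \cdot (p \oast q)(y) = \left(\sum_{(x)} p(x_1) q(x_2)\right)\left(\sum_{(y)} p(y_1) q(y_2)\right) = \sum_{(x),(y)} p(x_1) q(x_2) p(y_1) q(y_2)$. These two expressions agree precisely because $A$ is commutative, which lets us swap the middle factors $q(x_2)$ and $p(y_1)$.

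The commutativity of $A$ is thus the only nontrivial ingredient, and it enters exactly at this swap; without it the claim genuinely fails. Since the statement is quoted as \cite[Exercise 1.5.11(a)]{GriRei}, I would keep the proof brief — essentially the Sweedler computation above — and remark that the key point is the commutativity of $A$. There is no real obstacle here; the only thing to be careful about is being explicit that the algebra structure on $C \otimes C$ used in ``$\Delta_C$ is an algebra homomorphism'' is the same one implicitly used when expanding the product $(p \oast q)(x) \cdot (p \oast q)(y)$, so that the two Sweedler expansions are literally comparable term by term.
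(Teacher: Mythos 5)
Your proof is correct and follows essentially the same route as the paper's: expand $\Delta_C(ab)$ via the multiplicativity of $\Delta_C$, apply $\mu_A \circ (p \otimes q)$, use that $p$ and $q$ are algebra homomorphisms, and invoke the commutativity of $A$ to reorder the middle factors. The unit computation you spell out is exactly the "even simpler argument" the paper alludes to.
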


\begin{proof}
[Proof of Lemma \ref{lem.BtoA.bimap*bimap}.]Let $a\in C$ and $b\in C$. We
shall prove that $\left(  p\oast q\right)  \left(  ab\right)  =\left(  p\oast
q\right)  \left(  a\right)  \cdot\left(  p\oast q\right)  \left(  b\right)  $.

Using Sweedler notation, write $\Delta\left(  a\right)  =\sum_{\left(
a\right)  }a_{\left(  1\right)  }\otimes a_{\left(  2\right)  }$ and
$\Delta\left(  b\right)  =\sum_{\left(  b\right)  }b_{\left(  1\right)
}\otimes b_{\left(  2\right)  }$. Then, the multiplicativity of $\Delta$
yields $\Delta\left(  ab\right)  =\sum_{\left(  a\right)  }\sum_{\left(
b\right)  }a_{\left(  1\right)  }b_{\left(  1\right)  }\otimes a_{\left(
2\right)  }b_{\left(  2\right)  }$. Thus,
\begin{align*}
\left(  p\oast q\right)  \left(  ab\right)   &  =\sum_{\left(  a\right)  }
\sum_{\left(  b\right)  }\underbrace{p\left(  a_{\left(  1\right)  }b_{\left(
1\right)  }\right)  }_{\substack{=p\left(  a_{\left(  1\right)  }\right)
p\left(  b_{\left(  1\right)  }\right)  \\\text{(since }p\text{ is a
}\kk\text{-algebra}\\\text{homomorphism)}}}\quad \underbrace{q\left(
a_{\left(  2\right)  }b_{\left(  2\right)  }\right)  }_{\substack{=q\left(
a_{\left(  2\right)  }\right)  q\left(  b_{\left(  2\right)  }\right)
\\\text{(since }q\text{ is a }\kk\text{-algebra}\\\text{homomorphism)}}}
\\
&  =\sum_{\left(  a\right)  }\sum_{\left(  b\right)  }p\left(  a_{\left(
1\right)  }\right)  p\left(  b_{\left(  1\right)  }\right)  q\left(
a_{\left(  2\right)  }\right)  q\left(  b_{\left(  2\right)  }\right)  \\
&  =\underbrace{\left(  \sum_{\left(  a\right)  }p\left(  a_{\left(  1\right)
}\right)  q\left(  a_{\left(  2\right)  }\right)  \right)  }_{=\left(  p\oast
q\right)  \left(  a\right)  }\cdot\underbrace{\left(  \sum_{\left(  b\right)
}p\left(  b_{\left(  1\right)  }\right)  q\left(  b_{\left(  2\right)
}\right)  \right)  }_{=\left(  p\oast q\right)  \left(  b\right)  }\\
&  \qquad\qquad\left(  \text{since }A\text{ is commutative}\right)  \\
&  =\left(  p\oast q\right)  \left(  a\right)  \cdot\left(  p\oast q\right)
\left(  b\right)  .
\end{align*}
Now, forget that we fixed $a$ and $b$. We thus have proven that $\left(  p\oast
q\right)  \left(  ab\right)  =\left(  p\oast q\right)  \left(  a\right)
\cdot\left(  p\oast q\right)  \left(  b\right)  $ for each $a,b\in C$. An even
simpler argument shows that $\left(  p\oast q\right)  \left(  1\right)  =1$.
Combining these results, we conclude that $p\oast q:C\rightarrow A$ is a
$\kk$-algebra homomorphism. This completes the proof of Lemma
\ref{lem.BtoA.bimap*bimap}.
\end{proof}

\begin{lemma}
\label{lem.BtoA.preserveidk}
Let $B$ be a commutative $\kk$-bialgebra.
Let $k\in \NN$. Then, $\id^{\oast k}:B\rightarrow B$
is a $\kk$-algebra homomorphism.
\end{lemma}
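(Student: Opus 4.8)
The plan is to prove Lemma~\ref{lem.BtoA.preserveidk} by induction on $k$, using Lemma~\ref{lem.BtoA.bimap*bimap} as the engine. First I would recall that $\id = \id_B : B \to B$ is a $\kk$-algebra homomorphism (it is the identity map on the $\kk$-algebra $B$), and that $\eta \circ \epsilon : B \to B$ is also a $\kk$-algebra homomorphism (it is the composite of the $\kk$-algebra homomorphisms $\epsilon : B \to \kk$ and $\eta : \kk \to B$). Recall furthermore that $\eta \circ \epsilon$ is precisely the unity of the convolution algebra $\Hom(B,B)$, so that $\id^{\oast 0} = \eta \circ \epsilon$.

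The induction then runs as follows. For the base case $k = 0$, the map $\id^{\oast 0} = \eta \circ \epsilon$ is a $\kk$-algebra homomorphism, as just noted. For the induction step, fix $k \geq 1$ and assume that $\id^{\oast\left(k-1\right)} : B \to B$ is a $\kk$-algebra homomorphism. Then $\id^{\oast k} = \id^{\oast\left(k-1\right)} \oast \id$. Since $B$ is commutative by hypothesis, Lemma~\ref{lem.BtoA.bimap*bimap} (applied with $C = B$, $A = B$, $p = \id^{\oast\left(k-1\right)}$ and $q = \id$) tells us that $\id^{\oast\left(k-1\right)} \oast \id$ is a $\kk$-algebra homomorphism. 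Hence $\id^{\oast k}$ is a $\kk$-algebra homomorphism, completing the induction.

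There is essentially no obstacle here: the lemma is a direct and entirely routine consequence of Lemma~\ref{lem.BtoA.bimap*bimap}. The only points that require a word of care are (i) verifying that $\eta \circ \epsilon$ really is a $\kk$-algebra homomorphism and coincides with $\id^{\oast 0}$ (both facts are standard properties of the convolution product recalled in the ``Convolution and the dual algebra of a coalgebra'' subsection), and (ii) making sure the commutativity hypothesis on $B$ is invoked exactly where Lemma~\ref{lem.BtoA.bimap*bimap} requires its target algebra to be commutative — here the target is $B$ itself, which is commutative by assumption. No further machinery is needed.
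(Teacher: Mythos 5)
Your proof is correct and follows exactly the same route as the paper's: induction on $k$, with the base case $\id^{\oast 0} = \eta\epsilon$ being a $\kk$-algebra homomorphism and the induction step supplied by Lemma~\ref{lem.BtoA.bimap*bimap} applied to $C = B$ and $A = B$. You have merely written out the details that the paper leaves as ``straightforward''.
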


\begin{proof}
[Proof of Lemma \ref{lem.BtoA.preserveidk}.]This follows by straightforward
induction on $k$. Indeed, the base case $k=0$ follows from the fact that
$\eta_B \epsilon_B$ is a $\kk$-algebra homomorphism, while the
induction step uses Lemma \ref{lem.BtoA.bimap*bimap} (applied to $C=B$ and
$A=B$) and the fact that $\id_B$ is a $\kk$-algebra homomorphism.
\end{proof}

\begin{lemma}
\label{lem.id*k.grouplike}
Let $B$ be a $\kk$-bialgebra.
Let $g\in B$ be any grouplike element.
Let $k\in \NN$.
Then, $\id^{\oast k}\left(  g\right) =g^{k}$.
(Here, $\id$ means $\id_B$.)
\end{lemma}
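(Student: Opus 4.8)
The plan is to prove $\id^{\oast k}(g) = g^k$ by straightforward induction on $k$, using the grouplike property of $g$ together with the recursive definition of convolution powers. First I would handle the base case $k = 0$: here $\id^{\oast 0}$ is by definition the unity of the convolution algebra $\End(B) = \Hom(B,B)$, which is $\eta_B \epsilon_B$, so $\id^{\oast 0}(g) = \eta_B\epsilon_B(g) = \eta_B(\epsilon(g)) = \eta_B(1) = 1_B = g^0$, where I have used $\epsilon(g) = 1$ from the definition of a grouplike element.

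For the induction step, I would assume $\id^{\oast k}(g) = g^k$ for some $k \in \NN$ and compute $\id^{\oast (k+1)}(g)$. Writing $\id^{\oast (k+1)} = \id \oast \id^{\oast k} = \mu_B \circ \left(\id \otimes \id^{\oast k}\right) \circ \Delta$ and applying this to $g$, the key input is $\Delta(g) = g \otimes g$ (again from the definition of grouplike). Thus $\id^{\oast (k+1)}(g) = \mu_B\left(\left(\id \otimes \id^{\oast k}\right)(g \otimes g)\right) = \mu_B\left(\id(g) \otimes \id^{\oast k}(g)\right) = \mu_B\left(g \otimes g^k\right) = g \cdot g^k = g^{k+1}$, where the third equality uses the induction hypothesis. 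This completes the induction.

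There is no real obstacle here; the lemma is essentially immediate from unwinding the definitions. The only points requiring a modicum of care are: (i) remembering that the neutral element of the convolution algebra is $\eta_B \epsilon_B$ rather than something else, so that the base case comes out as $g^0 = 1_B$; and (ii) observing that the grouplike property $\Delta(g) = g \otimes g$ is exactly what makes the tensor factor in the convolution formula split into $g \otimes g^k$ rather than requiring Sweedler notation. One could equally phrase this as: the map $B \to B,\ b \mapsto$ (the value obtained) is determined, and on $g$ it collapses because $g$ is "diagonal" under $\Delta$. I would write the proof in two short paragraphs (base case, induction step) and conclude.

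\begin{proof}[Proof of Lemma~\ref{lem.id*k.grouplike}.]
We induct on $k$.

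\textit{Base case:} If $k = 0$, then $\id^{\oast 0}$ is the unity of the
$\kk$-algebra $\left(\Hom\left(B,B\right), \oast, \eta_B \epsilon_B\right)$,
which is $\eta_B \epsilon_B$. Hence,
$\id^{\oast 0}\left(g\right) = \eta_B\left(\epsilon\left(g\right)\right)
= \eta_B\left(1\right) = 1_B = g^0$ (here we used $\epsilon\left(g\right) = 1$,
since $g$ is grouplike). Thus, the claim holds for $k = 0$.

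\textit{Induction step:} Let $k \in \NN$, and assume that
$\id^{\oast k}\left(g\right) = g^k$. We must prove that
$\id^{\oast \left(k+1\right)}\left(g\right) = g^{k+1}$.

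From the definition of convolution, we have
$\id^{\oast \left(k+1\right)} = \id \oast \id^{\oast k}
= \mu_B \circ \left(\id \otimes \id^{\oast k}\right) \circ \Delta$.
Applying both sides to $g$, and using $\Delta\left(g\right) = g \otimes g$
(since $g$ is grouplike), we obtain
\[
\id^{\oast \left(k+1\right)}\left(g\right)
= \mu_B\left(\left(\id \otimes \id^{\oast k}\right)\left(g \otimes g\right)\right)
= \mu_B\left(g \otimes \id^{\oast k}\left(g\right)\right)
= \mu_B\left(g \otimes g^k\right)
= g \cdot g^k = g^{k+1} ,
\]
where the third equality used the induction hypothesis
$\id^{\oast k}\left(g\right) = g^k$. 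This completes the induction step,
and hence Lemma~\ref{lem.id*k.grouplike} is proven.
\end{proof}
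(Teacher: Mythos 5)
Your proof is correct and is exactly the induction on $k$ that the paper invokes (the paper only says ``this follows easily by induction on $k$''): the base case via $\epsilon(g)=1$ and the unity $\eta_B\epsilon_B$ of the convolution algebra, and the induction step via $\Delta(g)=g\otimes g$. Nothing to add.
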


\begin{proof}
[Proof of Lemma \ref{lem.id*k.grouplike}.]This follows easily by induction on
$k$.
\end{proof}

\begin{lemma}
\label{lem.dub.nonneg}Let $B$ be a $\kk$-bialgebra.
Let $b\in B$ be id-unipotent and nonzero. Let $m$
be a degree-upper bound of $b$. Then, $m\in \NN$.
\end{lemma}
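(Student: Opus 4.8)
Lemma \ref{lem.dub.nonneg} claims that a nonzero id-unipotent element $b$ of a $\kk$-bialgebra $B$ cannot have a negative integer as a degree-upper bound.

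The plan is to argue by contradiction: suppose $m$ is a degree-upper bound of $b$ and $m \notin \NN$, i.e., $m \leq -1$. By Definition~\ref{local_nilp}, this means that \emph{every} nonnegative integer $n > m$ satisfies $\left(\eta\epsilon - \id\right)^{\oast n}(b) = 0$. Since $m \leq -1$, the condition "$n > m$" is satisfied by \emph{all} $n \in \NN$, including $n = 1$. So in particular we get $\left(\eta\epsilon - \id\right)^{\oast 1}(b) = 0$, which is just $\left(\eta\epsilon - \id\right)(b) = 0$, i.e., $\eta\epsilon(b) = b$. Separately, the case $n = 0$ gives $\left(\eta\epsilon - \id\right)^{\oast 0}(b) = 0$; but $\left(\eta\epsilon - \id\right)^{\oast 0}$ is the neutral element of the convolution algebra, namely $\eta_B \epsilon_B$, so this says $\eta\epsilon(b) = 0$. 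Combining the two, $b = \eta\epsilon(b) = 0$, contradicting the assumption that $b$ is nonzero.

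So the one-line version is: if $m \leq -1$, then $n = 0 > m$ is allowed, and $\left(\eta\epsilon - \id\right)^{\oast 0} = \eta_B\epsilon_B$ (the convolution identity), so \eqref{eq.locnil} at $n = 0$ forces $\eta\epsilon(b) = 0$; then also taking $n = 1 > m$ gives $\eta\epsilon(b) - b = 0$, whence $b = 0$. (In fact one could even get $b = 0$ just from $n = 0$ together with $n = 1$, or alternatively just observe that $\left(\eta\epsilon - \id\right)^{\oast 0}(b) = 0$ directly gives $\eta\epsilon(b) = 0$ and then $\left(\eta\epsilon - \id\right)^{\oast 1}(b) = \eta\epsilon(b) - b = -b = 0$.)

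There is no real obstacle here; the only point requiring a moment's care is remembering that the $0$-th convolution power of \emph{any} map is $\eta_B\epsilon_B$ (not the $0$ map), which is exactly what makes $n = 0$ informative rather than vacuous, and that Definition~\ref{local_nilp} quantifies over nonnegative integers $n$, so $n=0$ and $n=1$ are both in scope once $m < 0$. I would write the proof as a short contradiction argument spelling out these two instances.
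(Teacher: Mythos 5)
Your proof is correct and is essentially the same as the paper's: both arguments observe that a negative $m$ forces \eqref{eq.locnil} to hold at $n=0$ and $n=1$, use the fact that $\left(\eta\epsilon-\id\right)^{\oast 0}=\eta\epsilon$ is the convolution identity, and combine the two instances to conclude $b=0$, contradicting the hypothesis. No issues.
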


\begin{proof}
[Proof of Lemma \ref{lem.dub.nonneg}.]Assume the contrary. Thus, $m$ is
negative (since $m$ is an integer). Therefore, every $n \in \NN$
satisfies $n > m$.
Hence, by the definition of a degree-upper
bound, we must have $\left(  \eta\epsilon-\id\right)  ^{\oast
n}\left(  b\right)  =0$ for each $n\in \NN$. Thus, in particular, we
have $\left(  \eta\epsilon-\id\right)  ^{\oast0}\left(
b\right)  =0$ and $\left(  \eta\epsilon-\id\right)  ^{\oast
1}\left(  b\right)  =0$. Hence,
\[
\underbrace{\left(  \eta\epsilon
-\id\right)  ^{\oast0}\left(  b\right)  }_{=0}
-\underbrace{\left(  \eta\epsilon-\id\right)  ^{\oast
1}\left(  b\right)  }_{=0}=0 .
\]
This contradicts
\[
\underbrace{\left(  \eta\epsilon-\id\right)  ^{\oast0}}_{=\eta\epsilon}
\left(  b\right)  -\underbrace{\left(  \eta\epsilon
-\id\right)  ^{\oast1}}_{=\eta\epsilon-\id}\left(  b\right)
=\left(  \eta\epsilon\right)  \left(  b\right)  -\left(
\eta\epsilon-\id\right)  \left(  b\right)
=\id\left(  b\right)  =b\neq0
\]
(since $b$ is nonzero). This contradiction completes the proof of Lemma
\ref{lem.dub.nonneg}.
\end{proof}

\begin{lemma}
\label{lem.2}Let $B$ be a $\kk$-bialgebra.
Let $b\in B$ be id-unipotent. Let $m$ be a degree-upper
bound of $b$. Then, in the ring $B\left[  \left[  t\right]  \right]  $ of
power series\footnote{This ring is defined in the usual way even
if $B$ is not commutative. Thus, $t$ will commute with every
power series in $B\left[\left[t\right]\right]$.}, we have
\[
\left(  1-t\right)  ^{m+1}\sum_{k\in \NN}
\id^{\oast k}\left(  b\right)  t^{k}
=
\sum_{k=0}^{m} \left(-1\right)^k \left(  \eta
\epsilon-\id\right)  ^{\oast k}\left(  b\right)  t^{k}\left(
1-t\right)  ^{m-k}.
\]
(Here, $\id$ means $\id_B$.)
\end{lemma}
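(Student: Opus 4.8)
The plan is to work in the power series ring $B\left[\left[t\right]\right]$ and exploit the identity $\id = \eta\epsilon - \left(\eta\epsilon - \id\right)$ to rewrite the generating function $\sum_{k\in\NN} \id^{\oast k}\left(b\right) t^k$ in terms of the convolution powers of $\eta\epsilon - \id$. The starting observation is that $\sum_{k\in\NN} \id^{\oast k} t^k$ is, formally, the geometric-series inverse of $\left(\eta\epsilon\right)\epsilon_C - t\cdot\id$-type expression; more precisely, since $\eta\epsilon$ is the convolution unit, we have $\sum_{k\in\NN} f^{\oast k} t^k = \left(\eta\epsilon - tf\right)^{\oast\left(-1\right)}$ in $\left(\Hom\left(B,B\right),\oast\right)\left[\left[t\right]\right]$ whenever this inverse exists, and it does exist here because the constant term (in $t$) is $\eta\epsilon$, the convolution unit. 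Taking $f = \id$ and writing $\id = \eta\epsilon - \overline{\id}$ where $\overline{\id} = \eta\epsilon - \id$ (following Remark~\ref{rmk.Deltaplusnilp}), we get $\eta\epsilon - t\,\id = \eta\epsilon - t\left(\eta\epsilon - \overline{\id}\right) = \left(1-t\right)\eta\epsilon + t\,\overline{\id}$.

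Next I would invert this. Since $\overline{\id}$ is id-unipotent-friendly — applied to $b$, its convolution powers $\overline{\id}^{\oast k}\left(b\right)$ vanish for $k > m$ — the series $\sum_{k\in\NN}\left(-1\right)^k\left(\dfrac{t}{1-t}\right)^k \overline{\id}^{\oast k}$, when evaluated at $b$, is actually a finite sum running only up to $k = m$, hence lives in $B\left[t, \left(1-t\right)^{-1}\right]$ rather than needing convergence. Concretely: from $\eta\epsilon - t\,\id = \left(1-t\right)\left(\eta\epsilon + \dfrac{t}{1-t}\overline{\id}\right)$ (valid after inverting $\left(1-t\right)$, which is a unit in $B\left[\left[t\right]\right]$) we obtain
\[
\sum_{k\in\NN}\id^{\oast k}\left(b\right) t^k
= \frac{1}{1-t}\left(\eta\epsilon + \frac{t}{1-t}\overline{\id}\right)^{\oast\left(-1\right)}\left(b\right)
= \frac{1}{1-t}\sum_{k\in\NN}\left(-\frac{t}{1-t}\right)^k \overline{\id}^{\oast k}\left(b\right) ,
\]
where the last sum terminates at $k = m$ because $\overline{\id}^{\oast k}\left(b\right) = 0$ for $k > m$. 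Multiplying both sides by $\left(1-t\right)^{m+1}$ distributes the denominators so that the $k$-th term picks up $\left(1-t\right)^{m+1}\cdot\left(1-t\right)^{-1}\cdot\left(1-t\right)^{-k} = \left(1-t\right)^{m-k}$, and the sign $\left(-1\right)^k$ from $\left(-t/(1-t)\right)^k$ gives the claimed $\left(-1\right)^k\left(\eta\epsilon - \id\right)^{\oast k}\left(b\right) t^k \left(1-t\right)^{m-k}$.

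The one point requiring care — and the main (mild) obstacle — is justifying the manipulations with $\left(\eta\epsilon + \frac{t}{1-t}\overline{\id}\right)^{\oast\left(-1\right)}$ rigorously without circular reasoning: the cleanest route is to avoid the rational function $t/(1-t)$ altogether and instead verify the asserted identity directly by multiplying the proposed right-hand side by $\sum_{k\in\NN}\id^{\oast k}\left(b\right) t^k$'s defining relation, i.e., check that $\left(\left(1-t\right)\eta\epsilon + t\,\overline{\id}\right) \oast \left(\sum_{k\in\NN}\id^{\oast k} t^k\right) = \eta\epsilon$ in $\left(\Hom\left(B,B\right),\oast\right)\left[\left[t\right]\right]$ (a one-line telescoping, using $\id^{\oast 0} = \eta\epsilon$), then multiply by $\left(1-t\right)^m$ and regroup using the finite support of $k \mapsto \overline{\id}^{\oast k}\left(b\right)$. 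Alternatively — and perhaps most transparently for a self-contained write-up — one can simply prove the claimed equality by induction on $m$: the base case $m$ a valid degree-upper bound with $\overline{\id}^{\oast k}\left(b\right) = 0$ for all $k > m$ but checking $m$ and $m+1$ against each other reduces to the single relation $\left(1-t\right)\sum_{k}\id^{\oast k}\left(b\right) t^k = \sum_k \id^{\oast k}\left(b\right) t^k - \sum_k \id^{\oast k}\left(b\right) t^{k+1}$ combined with $\id^{\oast\left(k+1\right)} = \id^{\oast k} \oast \id = \id^{\oast k}\oast\left(\eta\epsilon - \overline{\id}\right) = \id^{\oast k} - \id^{\oast k}\oast\overline{\id}$, which gives the recursion tying $\left(\eta\epsilon - \id\right)^{\oast k}$ to $\left(\eta\epsilon - \id\right)^{\oast\left(k+1\right)}$ that matches the telescoping on the right-hand side. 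Either way the computation is routine; the substance is just bookkeeping the powers of $\left(1-t\right)$.
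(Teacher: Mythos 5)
Your proposal is correct and follows essentially the same route as the paper's proof: both express $\sum_{k\in \NN}\id^{\oast k}\left(b\right)t^{k}$ as $\left(\eta\epsilon-t\,\id\right)^{\oast\left(-1\right)}\left(b\right)$ via the geometric series in the convolution algebra $\Hom\left(B,B\left[\left[t\right]\right]\right)$, factor out $\left(1-t\right)$ to re-expand in convolution powers of $\eta\epsilon-\id$ with argument $t/\left(1-t\right)$, truncate the sum at $k=m$ using the degree-upper bound, and multiply through by $\left(1-t\right)^{m+1}$. The only difference is your sign convention $\overline{\id}=\eta\epsilon-\id$ versus the paper's $\id-\eta\epsilon$, and the paper handles the inversion you flag as delicate by applying the geometric-series identity directly to $f=\left(\id-\eta\epsilon\right)\dfrac{1}{1-t}$, which is legitimate since $\left(1-t\right)^{-1}$ exists in $B\left[\left[t\right]\right]$ and $tf$ has no constant term in $t$.
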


\begin{proof}
[Proof of Lemma \ref{lem.2}.]
We know that $m$ is a degree-upper bound of $b$.
In other words, we have
$\left(  \eta\epsilon-\id\right)  ^{\oast n}\left(  b\right)
=0$ for every nonnegative integer $n > m$
(by \eqref{eq.locnil}).
In other words, we have
\begin{equation}
\left(
\eta\epsilon-\id\right)  ^{\oast k}\left(  b\right) = 0
\qquad \text{for every nonnegative integer $k > m$.}
\label{pf.lem.2.locnil}
\end{equation}

Let $\iota$ denote the canonical inclusion
$B \hookrightarrow B\left[\left[t\right]\right]$.
This is an injective $\kk$-algebra homomorphism, and
will be regarded as an inclusion.

The convolution algebra
$\left(  \Hom\left(  B,B  \right),\oast\right)  $
embeds in the convolution algebra
$\left(  \Hom\left(  B,B\left[\left[t\right]\right]\right)
,\oast\right)  $
(indeed, there is an injective $\kk$-algebra
homomorphism from the former to the latter, which sends
each
$f \in \Hom\left(  B,B  \right)$
to
$\iota \circ f \in \Hom\left(  B,B\left[\left[t\right]\right]\right)$).
Again, we shall regard this embedding as an inclusion
(so we will identify each
$f \in \Hom\left(  B,B  \right)$ with
$\iota \circ f$).

The convolution algebra
$\left(  \Hom\left(  B,B\left[\left[t\right]\right]\right)
,\oast\right)  $
has unity
$\eta\epsilon = \eta_{B\left[\left[t\right]\right]} \epsilon_B$.
Thus, from the classical power-series identity
$\left(1-u\right)^{-1} = \sum_{k\in \NN} u^k$, we obtain the equality
\begin{align}
\left(\eta\epsilon - t f\right)^{\oast\left(-1\right)}
= \sum_{k \in \NN} \left(tf\right)^{\oast k}
= \sum_{k \in \NN} t^k f^{\oast k}
\label{pf.lem.2.geom-series}
\end{align}
for every $f \in \Hom\left(  B,B\left[\left[t\right]\right]\right)$.

Now,
\begin{align}
\sum_{k\in \NN}\id^{\oast k}\left(  b\right)
t^{k}
&=\underbrace{\left(  \sum_{k\in \NN} t^k \id^{\oast k}\right)
}_{\substack{=\left(  \eta\epsilon
-t\id\right)  ^{\oast\left(  -1\right)  }
\\ \text{(by \eqref{pf.lem.2.geom-series}, applied to $f = \id$)}
}}\left(  b\right) \nonumber\\
&=\left(  \eta\epsilon-t\id\right)  ^{\oast\left(  -1\right)
}\left(  b\right)  .
\label{pf.lem.2.1}
\end{align}
But
\[
\eta\epsilon-t\id
= \eta\epsilon\left(  1-t\right)
- t \left( \id - \eta\epsilon \right)
= \left(  \eta\epsilon
- t \left( \id - \eta\epsilon \right)
\dfrac{1}{1-t} \right) \left(1-t\right)
\]
and therefore
\begin{align*}
\left(  \eta\epsilon-t\id\right)  ^{\oast\left(  -1\right)
}  &  = \left( \left(  \eta\epsilon
- t \left( \id - \eta\epsilon \right)
\dfrac{1}{1-t} \right) \left(1-t\right) \right)^{\oast\left(-1\right)} \\
&  =\left(  1-t\right)  ^{-1}\cdot\left( \eta\epsilon
- t \left( \id - \eta\epsilon \right)  \dfrac{1}{1-t} \right)
^{\oast\left(  -1\right)  }\\
&  =\left(  1-t\right)  ^{-1}\cdot\sum_{k\in \NN} t^k
\left( \left( \id - \eta\epsilon \right)  \dfrac{1}{1-t} \right)^{\oast k}
\end{align*}
(by \eqref{pf.lem.2.geom-series}, applied
to $f = \left( \id - \eta\epsilon \right)  \dfrac{1}{1-t}$).
Hence, \eqref{pf.lem.2.1} becomes
\begin{align*}
\sum_{k\in \NN}\id^{\oast k}\left(  b\right)
t^{k}
&  =\underbrace{\left(  \eta\epsilon-t\id\right)
^{\oast\left(  -1\right)  }}_{=
\left(  1-t\right)  ^{-1}\cdot\sum_{k\in \NN} t^k
\left( \left( \id - \eta\epsilon \right)  \dfrac{1}{1-t} \right)^{\oast k}
}\left(  b\right) \\
&= \left(  1-t\right)  ^{-1}\cdot\sum_{k\in \NN} t^k
\underbrace{\left( \left( \id - \eta\epsilon \right)
\dfrac{1}{1-t} \right)^{\oast k} \left(b\right)}_{
= \left( \id - \eta\epsilon \right)^{\oast k} \left(b\right)
\left(\dfrac{1}{1-t} \right)^k} \\
&  =\left(  1-t\right)  ^{-1}\cdot
\sum_{k\in \NN}\left(
\underbrace{\id - \eta\epsilon}_{= -\left(\eta\epsilon - \id\right)}
\right)  ^{\oast k}\left(  b\right)  \left(
\dfrac{t}{1-t}\right)  ^{k} \\
&  =\left(  1-t\right)  ^{-1}\cdot
\sum_{k\in \NN}\underbrace{\left(
-\left(\eta\epsilon - \id\right)
\right)  ^{\oast k}}_{=\left(-1\right)^k
\left(\eta \epsilon - \id\right)^{\oast k}}
\left(  b\right)  \left(
\dfrac{t}{1-t}\right)  ^{k} \\
&  =\left(  1-t\right)  ^{-1}\cdot\underbrace{\sum_{k\in \NN}
\left(-1\right)^k\left(
\eta\epsilon-\id\right)  ^{\oast k}\left(  b\right)  \left(
\dfrac{t}{1-t}\right)  ^{k}}_{\substack{\text{All addends of this sum for
}k>m\text{ are}\\\text{zero, due to (\ref{pf.lem.2.locnil}).}}}\\
&  =\left(  1-t\right)  ^{-1}\cdot\sum_{k=0}^{m}
\left(-1\right)^k \left(  \eta\epsilon
-\id\right)  ^{\oast k}\left(  b\right)
\left(  \dfrac{t}{1-t}\right)  ^{k}.
\end{align*}
Multiplying both sides of this equality by $\left(  1-t\right)  ^{m+1}$,
we get
\begin{align*}
\left(  1-t\right)  ^{m+1}
\sum_{k\in \NN}\id^{\oast k}\left(  b\right)  t^{k}
&= \left(  1-t\right)  ^m\cdot\sum_{k=0}^{m}
\left(-1\right)^k \left(  \eta\epsilon
-\id\right)  ^{\oast k}\left(  b\right)
\left(  \dfrac{t}{1-t}\right)  ^{k}
\\
&=\sum_{k=0}^{m} \left(-1\right)^k \left(  \eta
\epsilon-\id\right)  ^{\oast k}\left(  b\right)  t^{k}\left(
1-t\right)  ^{m-k}.
\end{align*}
This proves Lemma \ref{lem.2}.
\end{proof}

\begin{lemma}
\label{lem.laurent-sub}
Let $A$ be a commutative ring.
Let $m \in \NN$ and $u \in A$.
Let $F\left(t\right) \in A\left[t\right]$ be a polynomial of degree $\leq m$.
Set
\[
G\left(t\right) = t^m F\left(\dfrac{u}{t}\right) \in A\left[t,t^{-1}\right] .
\]
Then, $G\left(t\right)$ is actually a polynomial in $A\left[t\right]$ and
satisfies
\begin{equation}
G\left(u\right) = u^m F\left(1\right) .
\label{eq.lem.laurent-sub.Gu=}
\end{equation}
\end{lemma}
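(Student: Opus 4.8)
The plan is to reduce everything to an explicit computation with the coefficients of $F$. Since $F\left(t\right)$ has degree $\leq m$, I can write $F\left(t\right) = \sum_{k=0}^{m} a_k t^k$ for some coefficients $a_0, a_1, \ldots, a_m \in A$. First I would work inside the Laurent polynomial ring $A\left[t, t^{-1}\right]$, where $t$ is invertible, and substitute $u/t = u t^{-1}$ into $F$, then multiply by $t^m$:
\[
G\left(t\right) = t^m F\left(\frac{u}{t}\right) = t^m \sum_{k=0}^{m} a_k \left(u t^{-1}\right)^{k} = \sum_{k=0}^{m} a_k u^k t^{m-k} .
\]
Because the summation index $k$ ranges only over $\left\{0, 1, \ldots, m\right\}$, each exponent $m - k$ is a nonnegative integer, so the right-hand side already lies in the polynomial ring $A\left[t\right]$. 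This settles the first assertion of the lemma.

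Once I know $G\left(t\right) \in A\left[t\right]$, I can apply the $A$-algebra evaluation homomorphism $A\left[t\right] \to A$, $t \mapsto u$ (the universal property of the polynomial ring), and compute
\[
G\left(u\right) = \sum_{k=0}^{m} a_k u^k u^{m-k} = \sum_{k=0}^{m} a_k u^m = u^m \sum_{k=0}^{m} a_k = u^m F\left(1\right) ,
\]
where the last step uses $F\left(1\right) = \sum_{k=0}^{m} a_k$. This is exactly \eqref{eq.lem.laurent-sub.Gu=}, so the lemma follows.

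There is essentially no serious obstacle; the computation is just the reindexing $k \mapsto m-k$ after rescaling. The one point that genuinely requires attention is the order of operations: I must verify $G\left(t\right) \in A\left[t\right]$ \emph{before} substituting $t = u$, since the assignment $t \mapsto u$ does not extend to a ring homomorphism on $A\left[t, t^{-1}\right]$ unless $u$ is a unit of $A$ — and in the intended application (Lemma~\ref{lem.2} and Theorem~\ref{thm.1}) the element $u$ will in general be a non-unit. The hypothesis $\deg F \leq m$ is precisely what guarantees no negative powers of $t$ survive, and hence what makes this legitimate.
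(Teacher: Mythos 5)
Your proof is correct and follows essentially the same route as the paper's: expand $F$ in coefficients, observe that $G(t)=\sum_{k=0}^{m}a_k u^k t^{m-k}$ lies in $A[t]$, and only then evaluate at $t=u$. The paper makes exactly the same cautionary remark you do about why the substitution cannot be performed directly in $A[t,t^{-1}]$ when $u$ is not invertible.
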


\begin{proof}
[Proof of Lemma \ref{lem.laurent-sub}.]
We have $G\left(t\right) = t^m F\left(\dfrac{u}{t}\right) \in A\left[t\right]$
because $F\left(t\right)$ has degree $\leq m$.
It remains to show that $G\left(u\right) = u^m F\left(1\right)$.
It is tempting to argue this by substituting
$u$ for $t$ in the equality $G\left(t\right) = t^m F\left(\dfrac{u}{t}\right)$,
but this is not completely justified (we cannot arbitrarily
substitute $u$ for $t$ in an equality of Laurent polynomials unless we
know that $u$ is invertible\footnote{since the universal property of the Laurent
polynomial ring $A\left[  t,t^{-1}\right]  $ is only stated for invertible
elements}). But we can argue as follows instead:

The polynomial $F\left(t\right)$ has degree $\leq m$. Hence,
we can write it in the form $F\left(t\right)=a_{0}+a_{1}t+a_{2}%
t^{2}+\cdots+a_{m}t^{m}$ for some $a_{0},a_{1},\ldots,a_{m}\in A$.
Consider these $a_{0},a_{1},\ldots,a_{m}$. Thus, $F\left(  1\right)
=a_{0}+a_{1}1+a_{2}1^{2}+\cdots+a_{m}1^{m}=a_{0}+a_{1}+a_{2}+\cdots+a_{m}$ and
$F\left(  \dfrac{u}{t}\right)  =a_{0}+a_{1}\dfrac{u}{t}%
+a_{2}\left(  \dfrac{u}{t}\right)  ^{2}+\cdots+a_{m}\left(  \dfrac{u%
}{t}\right)  ^{m}$. The definition of $G\left(t\right)$ yields
\begin{align*}
G\left(  t\right)   &  =t^m\underbrace{F\left(  \dfrac{u}%
{t}\right)  }_{=a_{0}+a_{1}\dfrac{u}{t}+a_{2}\left(  \dfrac{u}%
{t}\right)  ^{2}+\cdots+a_{m}\left(  \dfrac{u}{t}\right)  ^{m}}%
=t^{m}\left(  a_{0}+a_{1}\dfrac{u}{t}+a_{2}\left(  \dfrac{u}%
{t}\right)  ^{2}+\cdots+a_{m}\left(  \dfrac{u}{t}\right)  ^{m}\right) \\
&  =a_{0}t^{m}+a_{1}ut^{m-1}+a_{2}u^{2}t^{m-2}+\cdots+a_{m}u%
^{m}t^{0}.
\end{align*}
Substituting $u$ for $t$ in this equality of polynomials, we obtain%
\begin{align*}
G\left(  u\right)   &  =a_{0}u^{m}+a_{1}uu^{m-1}%
+a_{2}u^{2}u^{m-2}+\cdots+a_{m}u^{m}u^{0}\\
&  =a_{0}u^{m}+a_{1}u^{m}+a_{2}u^{m}+\cdots+a_{m}u^{m}%
=u^{m}\underbrace{\left(  a_{0}+a_{1}+a_{2}+\cdots+a_{m}\right)  }%
_{=F\left(  1\right)  }=u^{m}F\left(  1\right)  ,
\end{align*}
and thus Lemma \ref{lem.laurent-sub} is proven.
\end{proof}

\begin{proof}
[Proof of Theorem \ref{thm.1}.]Let $b_{1},b_{2},\ldots,b_{n}\in L$
be such that $\sum_{i=1}^{n}b_{i}g_{i}=0$. We must show that $b_{i}=0$ for all
$i$.

Assume the contrary. Thus, there exists some $i$ such that $b_{i}\neq0$. We
WLOG assume that \textbf{each} $i$ satisfies $b_{i}\neq0$, since otherwise we
can just drop the violating $b_{i}$ and the corresponding $g_{i}$ and reduce
the problem to a smaller value of $n$.

For each $i\in\left\{  1,2,\ldots,n\right\}  $, the element $b_{i}\in B$ is
id-unipotent (since it belongs to $L$) and thus has a degree-upper bound.
We let $m_{i}$ be the \textbf{smallest} degree-upper bound of $b_{i}$. This is
well-defined, because Lemma \ref{lem.dub.nonneg} (applied to $b=b_{i}$) shows
that every degree-upper bound of $b_{i}$ must be $\in \NN$.
Thus, each $m_i$ belongs to $\NN$.

For each $i\in\left\{  1,2,\ldots,n\right\}  $, Lemma \ref{lem.2}
(applied to $b = b_i$ and $m = m_i$) shows
that in the ring $B\left[  \left[  t\right]  \right]  $, we have
\begin{align}
& \left(  1-t\right)  ^{m_{i}+1}\sum_{k\in \NN}\id^{\oast k}
\left(  b_{i}\right)  t^{k}
\nonumber\\
&=\sum_{k=0}^{m_{i}} \left(-1\right)^k \left(
\eta\epsilon-\id\right)  ^{\oast k}\left(  b_{i}\right)
t^{k}\left(  1-t\right)  ^{m_{i}-k}
\label{pf.thm.1.0}
\end{align}
(since $m_i$ is a degree-upper bound of $b_i$).
The right-hand side of this equality is a polynomial in $t$ of degree $\leq
m_{i}$. Denote this polynomial by $Q_{i}\left(  t\right)  $. Hence,
(\ref{pf.thm.1.0}) becomes%
\[
\left(  1-t\right)  ^{m_{i}+1}\sum_{k\in \NN}\id^{\oast k}
\left(  b_{i}\right)  t^{k}=Q_{i}\left(  t\right)  ,
\]
so that%
\begin{equation}
\sum_{k\in \NN}\id^{\oast k}\left(  b_{i}\right)
t^{k}=\dfrac{Q_{i}\left(  t\right)  }{\left(  1-t\right)  ^{m_{i}+1}}.
\label{pf.thm.1.1}%
\end{equation}

Now, let $k\in \NN$. Lemma \ref{lem.BtoA.preserveidk} yields that the
map $\id^{\oast k}$ is a $\kk$-algebra
homomorphism. Thus,%
\[
\id^{\oast k}\left(  \sum_{i=1}^{n}b_{i}g_{i}\right)
=\sum_{i=1}^{n}\id^{\oast k}\left(  b_{i}\right)
\underbrace{\id^{\oast k}\left(  g_{i}\right)
}_{\substack{=g_{i}^{k}\\\text{(by Lemma \ref{lem.id*k.grouplike}%
,}\\\text{since }g_{i}\text{ is grouplike)}}}
=\sum_{i=1}^{n}\id^{\oast k}\left(  b_{i}\right)  g_{i}^{k}.
\]
Hence,%
\begin{equation}
\sum_{i=1}^{n}\id^{\oast k}\left(  b_{i}\right)
g_{i}^{k}=\id^{\oast k}\left(  \underbrace{\sum
_{i=1}^{n}b_{i}g_{i}}_{=0}\right)  =0. \label{pf.thm.1.2}%
\end{equation}

Forget that we fixed $k$. We thus have proved \eqref{pf.thm.1.2} for each
$k\in \NN$. Now, in the commutative ring $B\left[  \left[  t\right]
\right]  $, we have%
\begin{align*}
\sum_{i=1}^{n}\underbrace{\dfrac{Q_{i}\left(  tg_{i}\right)  }{\left(
1-tg_{i}\right)  ^{m_{i}+1}}}_{\substack{=\sum_{k\in \NN}%
\id^{\oast k}\left(  b_{i}\right)  \left(
tg_{i}\right)  ^{k}\\\text{(by substituting }tg_{i}\\\text{for }t\text{ in
(\ref{pf.thm.1.1}))}}}  &  =\sum_{i=1}^{n}\sum_{k\in \NN}%
\id^{\oast k}\left(  b_{i}\right)  \underbrace{\left(
tg_{i}\right)  ^{k}}_{=t^{k}g_{i}^{k}}=\sum_{i=1}^{n}\sum_{k\in \NN%
}\id^{\oast k}\left(  b_{i}\right)  t^{k}g_{i}^{k}\\
&  =\sum_{k\in \NN}\underbrace{\sum_{i=1}^{n}\id^{\oast k}
\left(  b_{i}\right)  g_{i}^{k}}_{\substack{=0\\\text{(by
(\ref{pf.thm.1.2}))}}}t^{k}=0.
\end{align*}
Multiplying this equality by $\prod_{j=1}^{n}\left(  1-tg_{j}\right)
^{m_{j}+1}$, we obtain
\[
\sum_{i=1}^{n}Q_{i}\left(  tg_{i}\right)  \prod_{j\neq i}\left(
1-tg_{j}\right)  ^{m_{j}+1}=0.
\]
This is an equality in the polynomial ring $B\left[  t\right]  $. Hence, we
can apply the $B$-algebra homomorphism%
\[
B\left[  t\right]  \rightarrow B\left[  t,t^{-1}\right]
,\ \ \ \ \ \ \ \ \ \ t\mapsto t^{-1}%
\]
(into the Laurent polynomial ring $B\left[  t,t^{-1}\right]$)
to this equality. We obtain
\[
\sum_{i=1}^{n}Q_{i}\left(  \dfrac{g_{i}}{t}\right)  \prod_{j\neq i}\left(
1-\dfrac{g_{j}}{t}\right)  ^{m_{j}+1}=0.
\]
Multiplying this equality with $t^{m_{1}+m_{2}+\cdots+m_{n}+n-1}$, we
transform this equality into
\begin{equation}
\sum_{i=1}^{n}t^{m_{i}}Q_{i}\left(  \dfrac{g_{i}}{t}\right)  \prod_{j\neq
i}\left(  t-g_{j}\right)  ^{m_{j}+1}=0 \label{pf.thm.1.4a}%
\end{equation}
(an equality in the Laurent polynomial ring $B\left[  t,t^{-1}\right]  $).

For each $i\in\left\{  1,2,\ldots,n\right\}  $, we set
\[
R_{i}\left(  t\right)
=t^{m_{i}}Q_{i}\left(  \dfrac{g_{i}}{t}\right)  \in B\left[  t,t^{-1}\right]
.
\]
This $R_{i}\left(  t\right)  $ is actually a polynomial in $B\left[
t\right]  $ 
(since $Q_{i}$ is a polynomial of degree
$\leq m_{i}$). Using the definition of $R_{i}\left(  t\right)  $, the equality
(\ref{pf.thm.1.4a}) rewrites as
\begin{equation}
\sum_{i=1}^{n}R_{i}\left(  t\right)  \prod_{j\neq i}\left(  t-g_{j}\right)
^{m_{j}+1}=0.
\label{pf.thm.1.5}
\end{equation}

Now fix $h\in\left\{  1,2,\ldots,n\right\}  $. We have
$m_h \in \NN$ (since each $m_i$ belongs
to $\NN$), so that $m_h + 1$ is a positive integer. Therefore,
$0^{m_h+1} = 0$. In other words,
$\left(g_{h}-g_{h}\right)^{m_h+1}=0$.

The equality (\ref{pf.thm.1.5})
is an equality in the polynomial ring $B\left[  t\right]  $ (since
each $m_i$ belongs to $\NN$, and since each $R_i \left( t \right)$ is a
polynomial in $B\left[t\right]$), so we can substitute $g_{h}$ for $t$ in it.
We thus obtain
\begin{equation}
\sum_{i=1}^{n}R_{i}\left(  g_{h}\right)  \prod_{j\neq i}\left(  g_{h}%
-g_{j}\right)  ^{m_{j}+1}=0.
\label{pf.thm.1.6}
\end{equation}
But all addends on the left-hand side of this equality
are $0$ except for the addend with $i=h$
(since the product $\prod_{j\neq i}\left(  g_{h}-g_{j}\right)  ^{m_{j}+1}$
contains the factor $\left(g_{h}-g_{h}\right)^{m_h+1}=0$
unless $i=h$).
Thus, the equality
\eqref{pf.thm.1.6} simplifies to
\[
R_{h}\left(  g_{h}\right)  \prod_{j\neq h}\left(  g_{h}-g_{j}\right)
^{m_{j}+1}=0.
\]
Since all the factors $g_{h}-g_{j}$ (with $j\neq h$) are regular elements of
$B$ (by Assumption 1),
we can cancel $\prod_{j\neq h}\left(  g_{h}-g_{j}\right)  ^{m_{j}+1}$
from this equality, and obtain $R_{h}\left(  g_{h}\right)  =0$.

But recall that $Q_{h}\left(t\right) \in B\left[t\right]$ is a polynomial
of degree $\leq m_h$, and we have
$R_{h}\left(  t\right)  =t^{m_{h}}Q_{h}\left(  \dfrac{g_{h}%
}{t}\right)  $ (by the definition of $R_{h}$). Hence,
\eqref{eq.lem.laurent-sub.Gu=} (applied to $A = B$, $m = m_h$, $u = g_h$,
$F\left(t\right) = Q_h \left(t\right)$ and
$G\left(t\right) = R_h \left(t\right)$) yields
$R_{h}\left(  g_{h}\right)  = g_{h}^{m_{h}}Q_{h}\left(  1\right)  $.
Hence, $g_{h}^{m_{h}}Q_{h}\left(  1\right)  =R_{h}\left(  g_{h}\right)
=0$, so that $Q_{h}\left(  1\right)  =0$ (since Assumption 2 shows that
$g_{h}\in B$ is regular).

But the definition of $Q_{h}$ yields%
\[
Q_{h}\left(  t\right)  =\sum_{k=0}^{m_{h}} \left(-1\right)^k
\left(  \eta\epsilon
-\id\right)  ^{\oast k}\left(  b_{h}\right)  t^{k}\left(
1-t\right)  ^{m_{h}-k}.
\]
Substituting $1$ for $t$ in this equality, we find%
\begin{align*}
Q_{h}\left(  1\right)
&  =\sum_{k=0}^{m_{h}}
\left(-1\right)^k 
\left(  \eta\epsilon
-\id\right)  ^{\oast k}\left(  b_{h}\right)
\underbrace{1^{k}}_{=1}
\underbrace{\left(  1-1\right)  ^{m_{h}-k}}_{= 0^{m_{h}-k}=%
\begin{cases}
1, & \text{if }k=m_{h};\\
0, & \text{if }k\neq m_{h}%
\end{cases}
}\\
&= \sum_{k=0}^{m_{h}}
\left(-1\right)^k 
\left(  \eta\epsilon
-\id\right)  ^{\oast k}\left(  b_{h}\right)
\begin{cases}
1, & \text{if }k=m_{h};\\
0, & \text{if }k\neq m_{h}%
\end{cases}
\\
&  = \left(-1\right)^{m_h}
\left(  \eta\epsilon-\id\right)  ^{\oast m_{h}}\left(
b_{h}\right)  .
\end{align*}
Hence, $\left(-1\right)^{m_h} \left(  \eta\epsilon-\id\right)  ^{\oast m_{h}%
}\left(  b_{h}\right)  =Q_{h}\left(  1\right)  =0$.
Therefore, $\left(  \eta\epsilon-\id\right)  ^{\oast m_{h}}
\left(  b_{h}\right)  =0$.

Recall that $m_h$ is the smallest degree-upper bound of $b_h$
(since this is how $m_h$ was defined).
Thus, every nonnegative integer $n > m_h$ satisfies the equality
$\left(  \eta\epsilon-\id\right)  ^{\oast n}\left(  b_h\right)
=0$ (by the definition of a degree-upper bound).
Since $n = m_h$ also satisfies this equality
(because we just showed that
$\left(  \eta\epsilon-\id\right)  ^{\oast m_{h}}
\left(  b_{h}\right)  =0$),
we thus conclude that every nonnegative integer $n > m_h - 1$
satisfies $\left(  \eta\epsilon-\id\right)  ^{\oast n}
\left(  b_{h}\right)  =0$.
In other words, $m_h - 1$ is a degree-upper bound of $b_h$
(by the definition of a degree-upper bound).
Thus, $m_{h}$ is not the
\textbf{smallest} degree-upper bound of $b_{h}$ (since $m_{h}-1$
is smaller). This contradicts our definition of $m_{h}$. This
contradiction completes our proof of Theorem \ref{thm.1}.
\end{proof}

\subsection{\label{subsect.id-subalg}Appendix: The id-unipotents form a subalgebra}

In this subsection, we shall show that the id-unipotent elements in a
commutative $\kk$-bialgebra form a $\kk$-subalgebra. The proof
will rely on a sequence of lemmas. We begin with two identities for binomial coefficients:

\begin{lemma}
\label{lem.binom-altsum}Let $N\in \NN$. Let $\left(  a_{0},a_{1}%
,\ldots,a_{N}\right)  $ and $\left(  b_{0},b_{1},\ldots,b_{N}\right)  $ be two
$\left(  N+1\right)  $-tuples of rational numbers. Assume that
\[
b_{n}=\sum_{i=0}^{n}\left(  -1\right)  ^{i}\dbinom{n}{i}a_{i}
\ \ \ \ \ \ \ \ \ \ \text{for each }n\in\left\{  0,1,\ldots,N\right\}  .
\]
Then,
\[
a_{n}=\sum_{i=0}^{n}\left(  -1\right)  ^{i}\dbinom{n}{i}b_{i}
\ \ \ \ \ \ \ \ \ \ \text{for each }n\in\left\{  0,1,\ldots,N\right\}  .
\]

\end{lemma}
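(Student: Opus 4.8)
This is the classical binomial inversion formula, so the plan is to prove it by a direct double-sum manipulation rather than anything fancy. The statement is symmetric in $a$ and $b$ up to the sign convention, and in fact the map $(c_0,c_1,\ldots,c_N) \mapsto \left(\sum_{i=0}^n (-1)^i \binom{n}{i} c_i\right)_{0 \le n \le N}$ is an involution on $\Q^{N+1}$; proving this involution property is exactly what is asked. So the single computation to carry out is: assuming $b_n = \sum_{i=0}^n (-1)^i \binom{n}{i} a_i$ for all $n$, substitute this into $\sum_{i=0}^n (-1)^i \binom{n}{i} b_i$ and show the result collapses to $a_n$.

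\medskip

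\textbf{Key steps.} First I would fix $n \in \{0,1,\ldots,N\}$ and write
\[
\sum_{i=0}^{n}\left(-1\right)^{i}\dbinom{n}{i}b_{i}
= \sum_{i=0}^{n}\left(-1\right)^{i}\dbinom{n}{i}\sum_{j=0}^{i}\left(-1\right)^{j}\dbinom{i}{j}a_{j}
= \sum_{j=0}^{n}a_j \sum_{i=j}^{n}\left(-1\right)^{i+j}\dbinom{n}{i}\dbinom{i}{j},
\]
interchanging the order of summation (legitimate since the index set is the finite triangle $0 \le j \le i \le n$). Next I would apply the subset-of-a-subset identity $\dbinom{n}{i}\dbinom{i}{j} = \dbinom{n}{j}\dbinom{n-j}{i-j}$, pull $\dbinom{n}{j}$ out of the inner sum, and reindex the inner sum via $\ell = i - j$ to get $\sum_{\ell=0}^{n-j}(-1)^{\ell}\dbinom{n-j}{\ell}$ (the sign $(-1)^{i+j} = (-1)^{\ell+2j} = (-1)^{\ell}$). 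This inner sum is $(1-1)^{n-j} = 0^{n-j}$, which equals $1$ when $j = n$ and $0$ otherwise. Hence only the $j = n$ term survives, leaving $a_n \dbinom{n}{n} \cdot 1 = a_n$, as desired. Finally I would note that $n$ was arbitrary, completing the proof.

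\medskip

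\textbf{Main obstacle.} There is no real obstacle here — every step is a standard finite-sum manipulation over $\Q$, with no convergence or positivity issues. The only point requiring a modicum of care is the interchange of summation order and the reindexing, and making sure the sign bookkeeping $(-1)^{i+j}$ simplifies correctly to $(-1)^{i-j}$ before the Vandermonde-type collapse. If one preferred to avoid even writing out the double sum, one could instead invoke the fact that the lower-triangular matrix $\left((-1)^{j}\dbinom{i}{j}\right)_{0\le i,j\le N}$ is its own inverse (a well-known fact), but since the paper is being self-contained I would just do the three-line computation above.
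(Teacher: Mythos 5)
Your computation is correct: the interchange of summation over the triangle $0\le j\le i\le n$, the identity $\dbinom{n}{i}\dbinom{i}{j}=\dbinom{n}{j}\dbinom{n-j}{i-j}$, and the collapse of the inner sum to $0^{n-j}$ (which is $1$ exactly when $j=n$) together give $\sum_{i=0}^{n}(-1)^{i}\dbinom{n}{i}b_{i}=a_{n}$, and the sign bookkeeping $(-1)^{i+j}=(-1)^{\ell}$ with $\ell=i-j$ is right. The paper does not actually prove this lemma in the text --- it cites it to the literature ([Grinbe17, Proposition 7.26]) --- so your self-contained three-line argument is a genuine addition rather than a retread. It is worth comparing with what the paper does for the generalized version (Lemma~\ref{lem.binom-altsum2}, the same statement for infinite sequences in an arbitrary abelian group): there the authors encode the transformation as a row-finite matrix $M_1[n,i]=(-1)^i\dbinom{n}{i}$ and verify $M_1^2=I$ by transporting the action to $\Q[[z]]$ via exponential generating functions, where it becomes the visibly involutive operator $f(z)\mapsto f(-z)e^z$. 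Your direct double-sum proof establishes the same involution property $M_1^2=I$ entry by entry; since it uses only integer binomial coefficients and additive structure, it proves Lemma~\ref{lem.binom-altsum2} verbatim as well (no restriction to $\Q$ or to finite tuples is needed), whereas the generating-function route buys a slicker conceptual explanation at the cost of the detour through $\Q[[z]]$ and the injectivity of $\Phi_2$. One cosmetic remark: the inner-sum collapse $\sum_{\ell=0}^{n-j}(-1)^{\ell}\dbinom{n-j}{\ell}=(1-1)^{n-j}$ is just the binomial theorem rather than a Vandermonde-type identity, but this does not affect the argument.
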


\begin{lemma}
\label{lem.binom-prod}Let $a,b,m\in \NN$. Then,
\[
\dbinom{m}{a}\dbinom{m}{b}
= \sum_{i=a}^{a+b}\dbinom{i}{a}\dbinom{a}{a+b-i}\dbinom{m}{i}.
\]

\end{lemma}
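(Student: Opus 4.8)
The plan is to prove Lemma~\ref{lem.binom-prod} by a double-counting argument: I will show that both sides of the asserted identity count the number $N$ of pairs $\left(A,B\right)$ of subsets of $\left\{1,2,\ldots,m\right\}$ with $|A| = a$ and $|B| = b$. For the left-hand side this is immediate, since $A$ and $B$ may be chosen independently, giving $N = \binom{m}{a}\binom{m}{b}$. So the work lies in recounting $N$ in a way that produces the right-hand side.

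To do this, I would partition the pairs $\left(A,B\right)$ according to the value $i := |A \cup B|$. Given such a pair, put $S = A \cup B$ and $T = A \cap B$; then $|S| = i$, and inclusion--exclusion gives $|T| = |A| + |B| - |A \cup B| = a + b - i$, while $a \le i \le a+b$ (for $i$ outside this range the corresponding summand on the right is $0$ anyway). The key step is to check that, for each fixed $i$, the map $\left(A,B\right) \mapsto \left(S,A,T\right)$ is a bijection from the set of pairs with $|A \cup B| = i$ onto the set of triples $\left(S,A,T\right)$ with $S \subseteq \left\{1,\ldots,m\right\}$, $|S| = i$, $A \subseteq S$, $|A| = a$, $T \subseteq A$, $|T| = a+b-i$. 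The inverse sends $\left(S,A,T\right)$ to $\left(A,\, \left(S \setminus A\right) \cup T\right)$; one must verify that $\left(S\setminus A\right)\cup T$ indeed has $b$ elements (here one uses that $S\setminus A$ and $T$ are disjoint, as $T\subseteq A$) and that its union with $A$ equals $S$, and that the two maps are mutually inverse. Granting the bijection, the number of triples for a fixed $i$ is $\binom{m}{i}\binom{i}{a}\binom{a}{a+b-i}$, and summing over $i$ from $a$ to $a+b$ yields exactly the right-hand side, completing the proof.

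The only part that requires genuine care is the verification of this bijection; everything else is bookkeeping, and the degenerate cases (e.g.\ $m < a$, where both sides vanish) are handled automatically. As an alternative, a purely algebraic proof is also available: assuming $m \ge a$ (otherwise both sides are $0$), substitute $i = a + j$, use the ``subset of a subset'' identity $\binom{m}{a+j}\binom{a+j}{a} = \binom{m}{a}\binom{m-a}{j}$ to factor $\binom{m}{a}$ out of the sum, and then apply the Vandermonde convolution identity $\sum_{j}\binom{m-a}{j}\binom{a}{b-j} = \binom{m}{b}$. I would likely present the bijective proof as the main argument and mention this algebraic route as a remark.
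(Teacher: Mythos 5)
Your proof is correct. Note that the paper itself does not prove Lemma~\ref{lem.binom-prod}; it simply cites it from the literature (as \cite[Proposition 3.37]{detnotes}), so there is no in-paper argument to compare against. Your double-counting argument is sound: both sides count pairs $\left(A,B\right)$ of subsets of $\left\{1,\ldots,m\right\}$ with $\left|A\right|=a$ and $\left|B\right|=b$, the right-hand side by stratifying according to $i=\left|A\cup B\right|$ and encoding such a pair as the triple $\left(A\cup B,\ A,\ A\cap B\right)$; the inverse map $\left(S,A,T\right)\mapsto\left(A,\left(S\setminus A\right)\cup T\right)$ works exactly as you describe, since $S\setminus A$ and $T\subseteq A$ are disjoint, giving $\left|B\right|=\left(i-a\right)+\left(a+b-i\right)=b$, and the out-of-range or oversized indices are killed by the vanishing of the corresponding binomial coefficients. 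The algebraic route you sketch (shifting $i=a+j$, applying the subset-of-a-subset identity $\binom{m}{a+j}\binom{a+j}{a}=\binom{m}{a}\binom{m-a}{j}$, and finishing with Vandermonde) is likewise correct and is essentially the standard textbook proof; either presentation would serve as a complete substitute for the citation.
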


Both of these lemmas are not hard to prove; they are also easily found in the
literature (e.g., Lemma \ref{lem.binom-altsum} is \cite[Proposition
7.26]{detnotes}, while Lemma \ref{lem.binom-prod} is \cite[Proposition
3.37]{detnotes}). Note that we have
\begin{equation}
\dbinom{n}{i}=0 \label{eq.binom.ni=0}
\end{equation}
for any $n,i\in \NN$ satisfying $i>n$. Thus, the nonzero addends on the
right-hand side in Lemma \ref{lem.binom-prod} don't start until $i=\max
\left\{  a,b\right\}  $.

The following modified version of Lemma \ref{lem.binom-altsum} will be useful
to us:\footnote{An ``abelian group, written
additively'' means an abelian group whose binary operation is
denoted by $+$ and whose neutral element is denoted by $0$.}

\begin{lemma}
\label{lem.binom-altsum2}Let $A$ be an abelian group, written additively. Let
$\left(  a_{0},a_{1},a_{2},\ldots\right)  $ and $\left(  b_{0},b_{1}%
,b_{2},\ldots\right)  $ be two sequences of elements of $A$. Assume that
\[
b_{n}=\sum_{i=0}^{n}\left(  -1\right)  ^{i}\dbinom{n}{i}a_{i}
\ \ \ \ \ \ \ \ \ \ \text{for each }n\in \NN.
\]
Then,
\[
a_{n}=\sum_{i=0}^{n}\left(  -1\right)  ^{i}\dbinom{n}{i}b_{i}
\ \ \ \ \ \ \ \ \ \ \text{for each }n\in \NN.
\]

\end{lemma}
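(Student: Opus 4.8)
The plan is to derive Lemma~\ref{lem.binom-altsum2} directly from Lemma~\ref{lem.binom-altsum} by a reduction argument, rather than re-proving the binomial identity from scratch. The key observation is that Lemma~\ref{lem.binom-altsum} is a statement about rational numbers, while Lemma~\ref{lem.binom-altsum2} is about an arbitrary abelian group $A$. To bridge this gap, I would fix a single value $n \in \NN$ and observe that both the hypothesis relation and the desired conclusion, when restricted to indices $0, 1, \ldots, n$, involve only the finitely many elements $a_0, a_1, \ldots, a_n$ and $b_0, b_1, \ldots, b_n$ of $A$, together with integer coefficients $(-1)^i \binom{m}{i}$.

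The cleanest way to carry this out is a ``universal coefficients'' style argument. First I would note that the claim $a_n = \sum_{i=0}^{n} (-1)^i \binom{n}{i} b_i$ is equivalent (after substituting the hypotheses $b_i = \sum_{j=0}^{i} (-1)^j \binom{i}{j} a_j$ into the right-hand side) to a single identity of the form $a_n = \sum_{j=0}^{n} \left( \sum_{i=j}^{n} (-1)^{i+j} \binom{n}{i}\binom{i}{j} \right) a_j$ in $A$; that is, it suffices to prove the integer identity $\sum_{i=j}^{n} (-1)^{i+j} \binom{n}{i}\binom{i}{j} = [j = n]$ for all $j \in \{0,1,\ldots,n\}$. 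But this integer identity is exactly what Lemma~\ref{lem.binom-altsum} gives: apply that lemma with $N = n$ to the specific $\Q$-valued tuple $(a_0,\ldots,a_N)$ defined by $a_j = [j = n]$ (the Kronecker delta), and read off the conclusion for index $n$, or more simply apply it to the tuples coming from the standard basis. Alternatively, and perhaps even more transparently, one can invoke Lemma~\ref{lem.binom-altsum} with $a_i$ replaced by a formal indeterminate $x_i$ in the polynomial ring $\Q[x_0,\ldots,x_N]$ (which is abelian as an additive group, and the lemma applies since we can run it coordinate-wise in $\Q$), obtaining the polynomial identity $x_n = \sum_{i=0}^n (-1)^i \binom{n}{i} b_i$ where $b_i = \sum_{j=0}^i (-1)^j \binom{i}{j} x_j$; then substitute $x_i \mapsto a_i$ via the group homomorphism $\Z^{N+1} \to A$, $e_i \mapsto a_i$.

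Concretely, the steps in order: (1)~Fix $n \in \NN$ and set $N = n$. (2)~Introduce the free abelian group $\Z^{N+1}$ with basis $e_0, \ldots, e_N$, and let $\varphi : \Z^{N+1} \to A$ be the unique group homomorphism with $\varphi(e_i) = a_i$. (3)~Apply Lemma~\ref{lem.binom-altsum} to each of the $N+1$ coordinate sequences of the tuple $(e_0, \ldots, e_N)$ viewed in $\Q^{N+1}$ — i.e., prove the needed identity first in $\Q$, hence in $\Q^{N+1}$, hence in its subgroup $\Z^{N+1}$ — to get $e_n = \sum_{i=0}^{n} (-1)^i \binom{n}{i} \tilde b_i$ in $\Z^{N+1}$, where $\tilde b_i = \sum_{j=0}^{i} (-1)^j \binom{i}{j} e_j$. (4)~Apply $\varphi$ to this equality; since $\varphi(\tilde b_i) = \sum_{j=0}^i (-1)^j \binom{i}{j} a_j = b_i$ by the hypothesis of Lemma~\ref{lem.binom-altsum2}, we obtain $a_n = \sum_{i=0}^{n} (-1)^i \binom{n}{i} b_i$. (5)~Since $n$ was arbitrary, this proves Lemma~\ref{lem.binom-altsum2}.

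I do not expect a genuine obstacle here; the only thing requiring a little care is the passage from $\Q$ to $\Z$ in step~(3) — one must make sure the intermediate quantities $\tilde b_i$ and the final combination actually lie in the integer lattice, which is automatic since all the binomial coefficients are integers and the $e_i$ are integer vectors. (If one prefers to avoid even this, one can run the whole argument with $A$ replaced by $A \otimes_{\Z} \Q$ only in the case where $A$ is torsion-free, and then handle torsion separately — but the free-abelian-group route above sidesteps this entirely and works for any $A$.) The mild subtlety, and the one point worth stating explicitly in the write-up, is that we are allowed to apply Lemma~\ref{lem.binom-altsum} coordinate-wise because its hypothesis and conclusion are both $\Q$-linear in the $a_i$, so they hold for a tuple of vectors iff they hold for each coordinate.
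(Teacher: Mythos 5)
Your proof is correct, but it takes a genuinely different route from the paper. The paper gives two proofs: the first simply observes that the proof of Lemma~\ref{lem.binom-altsum} in the cited source goes through verbatim over an arbitrary abelian group (after dropping the bound $N$), and the second encodes the transform as the row-finite matrix $M_1[n,i]=(-1)^i\binom{n}{i}$ and proves $M_1^2=I$ by transporting $M_1$ to the involution $f(z)\mapsto f(-z)e^z$ on exponential generating functions. You instead keep Lemma~\ref{lem.binom-altsum} as a black box and reduce the abelian-group statement to it by a base-change argument: fix $n$, work in the free abelian group $\Z^{n+1}$ with basis $e_0,\ldots,e_n$, establish $e_n=\sum_{i=0}^{n}(-1)^i\binom{n}{i}\tilde b_i$ with $\tilde b_i=\sum_{j=0}^{i}(-1)^j\binom{i}{j}e_j$ by applying Lemma~\ref{lem.binom-altsum} coordinate-wise over $\Q$ (legitimate, as hypothesis and conclusion are linear in the entries), observe that all quantities lie in the integer lattice because binomial coefficients are integers, and push the identity forward along the homomorphism $e_i\mapsto a_i$, under which $\tilde b_i$ maps to $b_i$ precisely by the hypothesis of Lemma~\ref{lem.binom-altsum2}. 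This is sound and complete. What each approach buys: yours is modular --- you never need to reopen the proof of Lemma~\ref{lem.binom-altsum} to check that it avoids division or ordering, since the universal free-abelian case carries all the information --- at the cost of one extra layer of specialization; the paper's first proof is shorter but requires inspecting the cited argument, and its second proof is heavier machinery but explains conceptually why the transform is an involution (and reuses that machinery elsewhere).
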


\begin{proof}
[First proof of Lemma \ref{lem.binom-altsum2}.]The proof of Lemma
\ref{lem.binom-altsum} that was given in \cite[proof of Proposition
7.26]{detnotes} can be immediately reused as a proof of Lemma
\ref{lem.binom-altsum2} (after removing all inequalities of the form
``$n\leq N$'', and after replacing every
appearance of ``$\left\{  0,1,\ldots,N\right\}  $'' by
``$\NN$'').
\end{proof}

\begin{proof}[Second proof of Lemma~\ref{lem.binom-altsum2} (sketched).]
We will interpret binomial transforms in terms of sequence transformations (see \cite{GoF04} for motivations and details). 
A \emph{row-finite matrix} will mean a family
\[
\left(M[n,k]\right)_{n,k\geq 0}
\qquad \text{with } M[n,k] \in \kk
\]
(i.e., a matrix of type $(\N,\N)$ with elements in $\kk$, in the terminology of \cite[Chapter II, \S 10]{Bourbaki-Alg1}) with the property that for every fixed row index $n$, the sequence 
$\left(M[n,k]\right)_{k\geq 0}$ has finite support.
We let $\mathcal{L}(\kk^{\N})$ be the algebra of row-finite matrices;
its product is given by the 
usual formula\footnote{This is just the definition in \cite[Chapter II, \S 10, (3)]{Bourbaki-Alg1}, extended a bit.}
\begin{equation}
\tup{M_1M_2}[i,j]=\sum_{k\in \N}M_1[i,k]M_2[k,j]\ .
\end{equation}
Any row-finite matrix $M \in \mathcal{L}(\kk^{\N})$ canonically acts on $A^\N$ for any $\kk$-module $A$:
Namely, if $a=(a_n)_{n\geq 0} \in A^\N$ is a sequence of elements of $A$, then $Ma$ is defined to be the sequence $(b_n)_{n\geq 0} \in A^\N$ with
\begin{equation}
b_n=\sum_{k\geq 0} M[n,k]\,a_k \qquad \text{for all $n \in \N$.}
\end{equation}
This action gives rise to a $\kk$-algebra homomorphism
\begin{equation}\label{Phi_1}
\Phi_1:\ \mathcal{L}(\kk^{\N})\to \End(A^\N) ,
\end{equation}
which is injective in the case when $A=\kk$
(but neither injective nor surjective in the general case).
Applying this to $\kk = \Q$ and $A = \Q$, we thus have an injective $\Q$-algebra homomorphism
$\Phi_1 : \ \mathcal{L}(\Q^{\N}) \to \End(\Q^\N)$.
We also have a $\Q$-vector space isomorphism
\begin{align*}
\Q^\N & \overset{\cong}{\to} \Q[[z]], \\
(a_n)_{n\in \N} &\mapsto \sum_{n\ge0}a_n\frac{z^n}{n!}
\end{align*}
(where $z$ is a formal variable),
thus a $\Q$-algebra isomorphism $\End(\Q^\N) \to \End(\Q[[z]])$.
Composing the latter with $\Phi_1$,
we get an injective $\Q$-algebra homomorphism
$\Phi_2:\ \mathcal{L}(\Q^{\N})\to \End(\Q[[z]])$.
Explicitly, for any power series $f=f(z)=\sum_{n\ge0}a_n\frac{z^n}{n!} \in \Q[[z]]$
and any row-finite matrix $M\in \mathcal{L}(\Q^{\N})$, we have  
\begin{equation}
\Phi_2(M)(f)=\sum_{n\ge0}b_n\frac{z^n}{n!}=\sum_{n\ge0}\sum_{k\geq 0} M[n,k]\,a_k\frac{z^n}{n!}\ .
\end{equation}

Now, let $M_1 \in \mathcal{L}(\Z^{\N})\subseteq \mathcal{L}(\Q^{\N})$ be the
row-finite matrix whose entries are
\begin{equation}\label{binom-altsum2}
M_1[n,i]:=\left(-1\right)^{i}\dbinom{n}{i} .
\end{equation} 
Then, the assumption in Lemma~\ref{lem.binom-altsum2} is saying that
$b = M_1 a$, where $a, b \in A^\NN$ are given by
$a = \left(a_n\right)_{n \geq 0}$ and $b = \left(b_n\right)_{n \geq 0}$.
Likewise, the claim of Lemma~\ref{lem.binom-altsum2} is saying that
$a = M_1 b$.
Hence, in order to prove Lemma~\ref{lem.binom-altsum2}, it suffices
to show that $M_1^2 a = a$. Thus, it suffices to show that
$M_1^2 = I_{\mathcal{L}(\Z^{\N})}$ (the identity
matrix in $\mathcal{L}(\Z^{\N})$).

But this can be done via the injective $\Z$-algebra homomorphism
$\Phi_2 :\ \mathcal{L}(\Q^{\N})\to \End(\Q[[z]])$.
Indeed, a direct calculation shows that any $f \in \Q[[z]]$
satisfies
\[
\Phi_2(M_1)(f)(z)=f(-z)e^z ,
\]
and hence
\[
\Phi_2(M_1^2)(f)(z)=\Phi_2(M_1)(f(-z)e^z)=
(f(-(-z))e^{-z})e^z = f(z) .
\]
This shows that $\Phi_2(M_1^2) = \id = \Phi_2(I_{\mathcal{L}(\Q^{\N})})$.
Since $\Phi_2$ is injective, this entails
$M_1^2=I_{\mathcal{L}(\Q^{\N})}=I_{\mathcal{L}(\Z^{\N})}$ and proves Lemma~\ref{lem.binom-altsum2}
(as every abelian group is a $\Z$-module).
\end{proof}

Our next lemma is a classical fact about finite differences:

\begin{lemma}
\label{lem.findiff.1}Let $A$ be an abelian group, written additively. Let
$\left(  a_{0},a_{1},a_{2},\ldots\right)  \in A^{\NN}$ be a sequence of
elements of $A$. Let $m\geq-1$ be an integer. Then, the following two
statements are equivalent:

\begin{enumerate}
\item We have $\sum_{i=0}^{n}\left(  -1\right)  ^{i}\dbinom{n}{i}a_{i}=0$ for
every integer $n>m$.

\item There exist $m+1$ elements $c_{0},c_{1},\ldots,c_{m}$ of $A$ such that
every $n\in \NN$ satisfies $a_{n}=\sum_{i=0}^{m}\dbinom{n}{i}c_{i}$.
\end{enumerate}
\end{lemma}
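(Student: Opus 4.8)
The plan is to deduce both implications from a single fact: the ``binomial transform'' $\left(a_n\right)_{n\ge 0}\mapsto\left(\sum_{i=0}^n\left(-1\right)^i\dbinom{n}{i}a_i\right)_{n\ge 0}$ on $A^{\NN}$ is an involution. This is precisely Lemma~\ref{lem.binom-altsum2}. So I would begin by introducing the sequence $\left(b_0,b_1,b_2,\ldots\right)\in A^{\NN}$ defined by $b_n=\sum_{i=0}^n\left(-1\right)^i\dbinom{n}{i}a_i$ for each $n\in\NN$. Lemma~\ref{lem.binom-altsum2} then supplies the reverse formula $a_n=\sum_{i=0}^n\left(-1\right)^i\dbinom{n}{i}b_i$ for each $n\in\NN$. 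In this language, the first statement of Lemma~\ref{lem.findiff.1} is exactly the assertion that $b_n=0$ for every integer $n>m$.

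For the implication from the first statement to the second, I would assume $b_n=0$ for all $n>m$ and substitute into the reverse formula: for every $n\in\NN$ we get $a_n=\sum_{i=0}^n\left(-1\right)^i\dbinom{n}{i}b_i=\sum_{i=0}^m\left(-1\right)^i\dbinom{n}{i}b_i$, where the second equality uses $b_i=0$ for $i>m$ together with \eqref{eq.binom.ni=0} (which kills the coefficients $\dbinom{n}{i}$ with $i>n$, allowing me to replace the summation range $\left\{0,\ldots,n\right\}$ by $\left\{0,\ldots,m\right\}$ in either direction). Setting $c_i:=\left(-1\right)^i b_i$ for $i\in\left\{0,1,\ldots,m\right\}$ then gives $a_n=\sum_{i=0}^m\dbinom{n}{i}c_i$ for all $n$, which is the second statement. (When $m=-1$ this degenerates correctly: $\left(b_n\right)$, hence $\left(a_n\right)$, is the zero sequence, and the required $\left(-1\right)$-tuple of $c_i$'s is empty.)

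For the converse, I would start from $c_0,c_1,\ldots,c_m\in A$ with $a_n=\sum_{i=0}^m\dbinom{n}{i}c_i$ for all $n$, and define an auxiliary sequence $\left(\widetilde{b}_0,\widetilde{b}_1,\ldots\right)\in A^{\NN}$ by $\widetilde{b}_i=\left(-1\right)^i c_i$ for $0\le i\le m$ and $\widetilde{b}_i=0$ for $i>m$. A short computation, again using \eqref{eq.binom.ni=0} to adjust the summation range, shows $\sum_{i=0}^n\left(-1\right)^i\dbinom{n}{i}\widetilde{b}_i=\sum_{i=0}^m\dbinom{n}{i}c_i=a_n$ for every $n$; that is, the binomial transform of $\left(\widetilde{b}_n\right)$ is $\left(a_n\right)$. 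Applying Lemma~\ref{lem.binom-altsum2} with $\left(\widetilde{b}_n\right)$ and $\left(a_n\right)$ playing the roles of $\left(a_n\right)$ and $\left(b_n\right)$ respectively, I conclude that the binomial transform of $\left(a_n\right)$ is $\left(\widetilde{b}_n\right)$. But that transform is $\left(b_n\right)$ by definition, so $b_n=\widetilde{b}_n$ for all $n$, and in particular $b_n=\widetilde{b}_n=0$ for every $n>m$, which is the first statement.

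I do not expect any genuine conceptual obstacle once Lemma~\ref{lem.binom-altsum2} is available; the only point requiring care is the repeated, routine use of \eqref{eq.binom.ni=0} to pass freely between summing over $\left\{0,\ldots,n\right\}$ and over $\left\{0,\ldots,m\right\}$ (harmless precisely because the extra coefficients $\dbinom{n}{i}$ with $i>n$ vanish), together with bookkeeping the degenerate case $m=-1$, in which both statements merely assert that $\left(a_n\right)$ is the zero sequence.
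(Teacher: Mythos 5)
Your proof is correct and follows essentially the same route as the paper: both directions rest on the involution property of the binomial transform (Lemma~\ref{lem.binom-altsum2}) combined with the summation-range adjustment via \eqref{eq.binom.ni=0}, and your auxiliary sequence $\left(\widetilde{b}_i\right)$ is exactly the paper's sequence $\left(d_i\right)$. The only (cosmetic) difference is that you consistently invoke the abelian-group version Lemma~\ref{lem.binom-altsum2}, whereas the paper's write-up of the implication 1 $\Longrightarrow$ 2 cites the finite rational version Lemma~\ref{lem.binom-altsum}; your choice is the more appropriate one here.
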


The sequences $\left(  a_{0},a_{1},a_{2},\ldots\right)  $ satisfying these two
equivalent statements can be regarded as a generalization of polynomial
sequences (i.e., sequences whose $n$-th entry is given by evaluating a fixed
integer-valued polynomial at $n$).

\begin{proof}
[Proof of Lemma \ref{lem.findiff.1}.]We must prove the equivalence of
Statement 1 and Statement 2. We shall do so by proving the implications 1
$\Longrightarrow$ 2 and 2 $\Longrightarrow$ 1 separately:

\textit{Proof of the implication 1 $\Longrightarrow$ 2:} Let us first
show that Statement 1 implies Statement 2.

Indeed, assume that Statement 1 holds. In other words, we have
\begin{equation}
\sum_{i=0}^{n}\left(  -1\right)  ^{i}\dbinom{n}{i}a_{i}=0
\label{pf.lem.findiff.1.1to2.ass}
\end{equation}
for every integer $n>m$.

We shall now show that Statement 2 holds.

Indeed, let us set
\begin{equation}
b_{n}=\sum_{i=0}^{n}\left(  -1\right)  ^{i}\dbinom{n}{i}a_{i}
\label{pf.lem.findiff.1.1to2.bn=}
\end{equation}
for each $n\in \NN$. Then, for every integer $n>m$, we have
\[
b_{n}=\sum_{i=0}^{n}\left(  -1\right)  ^{i}\dbinom{n}{i}a_{i}=0
\]
(by \eqref{pf.lem.findiff.1.1to2.ass}). Renaming $n$ as $i$ in this statement,
we obtain the following: For every integer $i>m$, we have
\begin{equation}
b_{i}=0. \label{pf.lem.findiff.1.1to2.assb}
\end{equation}

Furthermore, recall that $b_{n}=\sum_{i=0}^{n}\left(  -1\right)  ^{i}
\dbinom{n}{i}a_{i}$ for each $n\in \NN$. Thus, Lemma
\ref{lem.binom-altsum2} shows that we have
\begin{equation}
a_{n}=\sum_{i=0}^{n}\left(  -1\right)  ^{i}\dbinom{n}{i}b_{i}
\label{pf.lem.findiff.1.1to2.an=1}
\end{equation}
for each $n\in \NN$.

Now, let $n\in \NN$. Let $N=\max\left\{  n,m\right\}  $; then, $N\geq n$
and $N\geq m$. Thus, $0\leq n\leq N$ and $-1\leq m\leq N$. Now, comparing
\begin{align*}
\sum_{i=0}^{N}\dbinom{n}{i}\left(  -1\right)  ^{i}b_{i}  &  =\sum_{i=0}%
^{n}\underbrace{\dbinom{n}{i}\left(  -1\right)  ^{i}}_{=\left(  -1\right)
^{i}\dbinom{n}{i}}b_{i}+\sum_{i=n+1}^{N}\underbrace{\dbinom{n}{i}%
}_{\substack{=0\\\text{(by \eqref{eq.binom.ni=0},}\\\text{since }i\geq
n+1>n\text{)}}}\left(  -1\right)  ^{i}b_{i}\\
&  \ \ \ \ \ \ \ \ \ \ \left(  \text{here, we have split the sum at
}i=n\text{, since }0\leq n\leq N\right) \\
&  =\sum_{i=0}^{n}\left(  -1\right)  ^{i}\dbinom{n}{i}b_{i}+\underbrace{\sum
_{i=n+1}^{N}0\left(  -1\right)  ^{i}b_{i}}_{=0}=\sum_{i=0}^{n}\left(
-1\right)  ^{i}\dbinom{n}{i}b_{i}\\
&  =a_{n}\ \ \ \ \ \ \ \ \ \ \left(  \text{by
\eqref{pf.lem.findiff.1.1to2.an=1}}\right)
\end{align*}
with
\begin{align*}
\sum_{i=0}^{N}\dbinom{n}{i}\left(  -1\right)  ^{i}b_{i}  &  =\sum_{i=0}%
^{m}\dbinom{n}{i}\left(  -1\right)  ^{i}b_{i}+\sum_{i=m+1}^{N}\dbinom{n}%
{i}\left(  -1\right)  ^{i}\underbrace{b_{i}}_{\substack{=0\\\text{(by
\eqref{pf.lem.findiff.1.1to2.assb},}\\\text{since }i\geq m+1>m\text{)}}}\\
&  \ \ \ \ \ \ \ \ \ \ \left(  \text{here, we have split the sum at
}i=m\text{, since }-1\leq m\leq N\right) \\
&  =\sum_{i=0}^{m}\dbinom{n}{i}\left(  -1\right)  ^{i}b_{i}+\underbrace{\sum
_{i=m+1}^{N}\dbinom{n}{i}\left(  -1\right)  ^{i}0}_{=0}=\sum_{i=0}^{m}%
\dbinom{n}{i}\left(  -1\right)  ^{i}b_{i},
\end{align*}
we obtain
\[
a_{n}=\sum_{i=0}^{m}\dbinom{n}{i}\left(  -1\right)  ^{i}b_{i}.
\]

Forget that we fixed $n$. We thus have proved that every $n\in \NN$
satisfies $a_{n}=\sum_{i=0}^{m}\dbinom{n}{i}\left(  -1\right)  ^{i}b_{i}$.
Thus, there exist $m+1$ elements $c_{0},c_{1},\ldots,c_{m}$ of $A$ such that
every $n\in \NN$ satisfies $a_{n}=\sum_{i=0}^{m}\dbinom{n}{i}c_{i}$
(namely, these $m+1$ elements are given by $c_{i}=\left(  -1\right)  ^{i}%
b_{i}$). In other words, Statement 2 holds.

We thus have proved the implication 1 $\Longrightarrow$ 2.

\textit{Proof of the implication 2 $\Longrightarrow$ 1:} Let us now
show that Statement 2 implies Statement 1.

Indeed, assume that Statement 2 holds. That is, there exist $m+1$ elements
$c_{0},c_{1},\ldots,c_{m}$ of $A$ such that every $n\in \NN$ satisfies
\begin{equation}
a_{n}=\sum_{i=0}^{m}\dbinom{n}{i}c_{i}. \label{pf.lem.findiff.1.2to1.ass}
\end{equation}
Consider these $c_{0},c_{1},\ldots,c_{m}$.

We must prove that Statement 1 holds. In other words, we must prove that we
have $\sum_{i=0}^{n}\left(  -1\right)  ^{i}\dbinom{n}{i}a_{i}=0$ for every
integer $n>m$.

Extend the $\left(  m+1\right)  $-tuple $\left(  c_{0},c_{1},\ldots
,c_{m}\right)  \in A^{m+1}$ to an infinite sequence $\left(  c_{0},c_{1}%
,c_{2},\ldots\right)  \in A^{\NN}$ by setting
\begin{equation}
\left(  c_{i}=0\ \ \ \ \ \ \ \ \ \ \text{for all integers }i>m\right)  .
\label{pf.lem.findiff.1.2to1.ci0}
\end{equation}
Furthermore, define a sequence $\left(  d_{0},d_{1},d_{2},\ldots\right)  \in
A^{\NN}$ by setting
\begin{equation}
\left(  d_{i}=\left(  -1\right)  ^{i}c_{i}\ \ \ \ \ \ \ \ \ \ \text{for all
}i\in \NN\right)  . \label{pf.lem.findiff.1.2to1.di}
\end{equation}
Then, each $i\in \NN$ satisfies
\begin{equation}
\left(  -1\right)  ^{i}\underbrace{d_{i}}_{\substack{=\left(  -1\right)
^{i}c_{i}\\\text{(by \eqref{pf.lem.findiff.1.2to1.di})}}}=\underbrace{\left(
-1\right)  ^{i}\left(  -1\right)  ^{i}}_{=\left(  -1\right)  ^{2i}=1}%
c_{i}=c_{i}. \label{pf.lem.findiff.1.2to1.-1di}
\end{equation}

Let $n\in \NN$. Let $N=\max\left\{  n,m\right\}  $; then, $N\geq n$ and
$N\geq m$. Thus, $0\leq n\leq N$ and $-1\leq m\leq N$. Now, comparing
\begin{align*}
\sum_{i=0}^{N}\dbinom{n}{i}c_{i}  &  =\sum_{i=0}^{n}\dbinom{n}{i}c_{i}%
+\sum_{i=n+1}^{N}\underbrace{\dbinom{n}{i}}_{\substack{=0\\\text{(by
\eqref{eq.binom.ni=0},}\\\text{since }i\geq n+1>n\text{)}}}c_{i}\\
&  \ \ \ \ \ \ \ \ \ \ \left(  \text{here, we have split the sum at
}i=n\text{, since }0\leq n\leq N\right) \\
&  =\sum_{i=0}^{n}\dbinom{n}{i}c_{i}+\underbrace{\sum_{i=n+1}^{N}0c_{i}}%
_{=0}=\sum_{i=0}^{n}\dbinom{n}{i}c_{i}
\end{align*}
with
\begin{align*}
\sum_{i=0}^{N}\dbinom{n}{i}c_{i}  &  =\sum_{i=0}^{m}\dbinom{n}{i}c_{i}%
+\sum_{i=m+1}^{N}\dbinom{n}{i}\underbrace{c_{i}}_{\substack{=0\\\text{(by
\eqref{pf.lem.findiff.1.2to1.ci0},}\\\text{since }i\geq m+1>m\text{)}}}\\
&  \ \ \ \ \ \ \ \ \ \ \left(  \text{here, we have split the sum at
}i=m\text{, since }-1\leq m\leq N\right) \\
&  =\sum_{i=0}^{m}\dbinom{n}{i}c_{i}+\underbrace{\sum_{i=m+1}^{N}\dbinom{n}%
{i}0}_{=0}=\sum_{i=0}^{m}\dbinom{n}{i}c_{i}=a_{n}\ \ \ \ \ \ \ \ \ \ \left(
\text{by \eqref{pf.lem.findiff.1.2to1.ass}}\right)  ,
\end{align*}
we obtain
\[
a_{n}=\sum_{i=0}^{n}\dbinom{n}{i}\underbrace{c_{i}}_{\substack{=\left(
-1\right)  ^{i}d_{i}\\\text{(by \eqref{pf.lem.findiff.1.2to1.-1di})}}%
}=\sum_{i=0}^{n}\dbinom{n}{i}\left(  -1\right)  ^{i}d_{i}=\sum_{i=0}%
^{n}\left(  -1\right)  ^{i}\dbinom{n}{i}d_{i}.
\]

Now, forget that we fixed $n$. We thus have shown that $a_{n}=\sum_{i=0}%
^{n}\left(  -1\right)  ^{i}\dbinom{n}{i}d_{i}$ for each $n\in \NN$.
Thus, Lemma \ref{lem.binom-altsum2} (applied to $d_{n}$ and $a_{n}$ instead of
$a_{n}$ and $b_{n}$) shows that
\begin{equation}
d_{n}=\sum_{i=0}^{n}\left(  -1\right)  ^{i}\dbinom{n}{i}a_{i}%
\ \ \ \ \ \ \ \ \ \ \text{for each }n\in \NN.
\label{pf.lem.findiff.1.2to1.d-through-a}
\end{equation}

Now, let $n>m$ be an integer. Then, \eqref{pf.lem.findiff.1.2to1.ci0} (applied
to $i=n$) yields $c_{n}=0$. But \eqref{pf.lem.findiff.1.2to1.di} (applied to
$i=n$) yields $d_{n}=\left(  -1\right)  ^{n}\underbrace{c_{n}}_{=0}=0$. But
\eqref{pf.lem.findiff.1.2to1.d-through-a} yields $d_{n}=\sum_{i=0}^{n}\left(
-1\right)  ^{i}\dbinom{n}{i}a_{i}$. Comparing the latter two equalities, we
obtain $\sum_{i=0}^{n}\left(  -1\right)  ^{i}\dbinom{n}{i}a_{i}=0$.

Forget that we fixed $n$. We thus have shown that $\sum_{i=0}^{n}\left(
-1\right)  ^{i}\dbinom{n}{i}a_{i}=0$ for every integer $n>m$. In other words,
Statement 1 holds.

We thus have proved the implication 2 $\Longrightarrow$ 1.

Having now shown both implications 1 $\Longrightarrow$ 2 and 2
$\Longrightarrow$ 1, we conclude that Statement 1 and Statement 2 are
equivalent. This proves Lemma \ref{lem.findiff.1}.
\end{proof}



\begin{definition}
Let $A$ be an abelian group, written additively. Let $\left(  a_{0}%
,a_{1},a_{2},\ldots\right)  \in A^{\NN}$ be a sequence of elements of
$A$. Let $m\geq-1$ be an integer. We say that the sequence $\left(
a_{0},a_{1},a_{2},\ldots\right)  $ is $m$\emph{-polynomial} if the two
equivalent statements 1 and 2 of Lemma \ref{lem.findiff.1} are satisfied.
\end{definition}

Note that if $A$ is a commutative $\QQ$-algebra, then a sequence
$\left(a_0, a_1, a_2, \ldots\right) \in A^{\NN}$ is $m$-polynomial
if and only if it is the sequence of values
$\left(f\left(0\right), f\left(1\right), f\left(2\right), \ldots\right)$
of some polynomial $f \in A\left[x\right]$ of degree $\leq m$.
This is because the expression $\sum_{i=0}^{m}\dbinom{n}{i}c_{i}$
in Lemma~\ref{lem.findiff.1} is just a generic form for a polynomial
function in $n$ of degree $\leq m$ over a commutative $\QQ$-algebra.
While we shall not use this fact below
(except in Example~\ref{exa.L.not-subcoalg}), it explains the provenance
of the word ``$m$-polynomial''.

\begin{lemma}
\label{lem.findiff.prod}Let $A$ be a ring. Let
$p\geq-1$ and $q\geq-1$ be two integers such that $p+q\geq-1$. Let $\left(
a_{0},a_{1},a_{2},\ldots\right)  \in A^{\NN}$ be a $p$-polynomial
sequence of elements of $A$. Let $\left(  b_{0},b_{1},b_{2},\ldots\right)  \in
A^{\NN}$ be a $q$-polynomial sequence of elements of $A$. Then,
$\left(  a_{0}b_{0},a_{1}b_{1},a_{2}b_{2},\ldots\right)  \in A^{\NN}$
is a $\left(  p+q\right)  $-polynomial sequence of elements of $A$.
\end{lemma}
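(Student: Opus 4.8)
The plan is to reduce everything to Statement~2 of Lemma~\ref{lem.findiff.1}, which characterizes an $m$-polynomial sequence as one whose $n$-th term is a fixed combination $\sum_{i=0}^{m}\binom{n}{i}c_i$ (with coefficients $c_i \in A$) of the binomial coefficients $\binom{n}{0},\binom{n}{1},\ldots,\binom{n}{m}$. This description behaves well under the pointwise product, the only wrinkle being that $\binom{n}{i}\binom{n}{j}$ is not itself one of the $\binom{n}{\ell}$; but this is exactly what Lemma~\ref{lem.binom-prod} repairs, and it even records that the product expands into $\binom{n}{\ell}$'s with $\ell \leq i+j$, which is precisely what makes the degrees add.

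First I would invoke Lemma~\ref{lem.findiff.1}: since $\left(a_0,a_1,a_2,\ldots\right)$ is $p$-polynomial, there are elements $c_0,c_1,\ldots,c_p \in A$ with $a_n = \sum_{i=0}^{p}\binom{n}{i}c_i$ for all $n\in\NN$; likewise there are $d_0,d_1,\ldots,d_q \in A$ with $b_n = \sum_{j=0}^{q}\binom{n}{j}d_j$ for all $n\in\NN$. (If $p=-1$ or $q=-1$, the corresponding sum is empty and that sequence is identically $0$, so $\left(a_0b_0,a_1b_1,\ldots\right)$ is the zero sequence, which is trivially $m$-polynomial for every $m\geq-1$, in particular for $m=p+q$; so I may as well assume $p,q\geq 0$.) Multiplying the two expressions and expanding by biadditivity of the product, I get
\[
a_n b_n = \sum_{i=0}^{p}\sum_{j=0}^{q}\binom{n}{i}\binom{n}{j}\, c_i d_j
\qquad\text{for all } n\in\NN .
\]
Now apply Lemma~\ref{lem.binom-prod} with its $m,a,b$ taken to be $n,i,j$ (renaming its summation index to $\ell$ to avoid a clash): this rewrites $\binom{n}{i}\binom{n}{j}$ as $\sum_{\ell=i}^{i+j}\binom{\ell}{i}\binom{i}{i+j-\ell}\binom{n}{\ell}$. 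Substituting this in and interchanging the (finite) sums, $a_n b_n$ becomes $\sum_{\ell}\binom{n}{\ell}e_\ell$, where $e_\ell\in A$ is the fixed finite integer combination $e_\ell = \sum \binom{\ell}{i}\binom{i}{i+j-\ell}\, c_i d_j$ of the $c_i d_j$, the sum ranging over all $(i,j)\in\{0,\ldots,p\}\times\{0,\ldots,q\}$ with $i\leq\ell\leq i+j$. Since $i\leq p$ and $j\leq q$ force $\ell\leq i+j\leq p+q$, only indices $\ell\in\{0,1,\ldots,p+q\}$ actually occur, so in fact $a_n b_n = \sum_{\ell=0}^{p+q}\binom{n}{\ell}e_\ell$ for every $n\in\NN$.

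This last identity is precisely Statement~2 of Lemma~\ref{lem.findiff.1} for the sequence $\left(a_0b_0,a_1b_1,a_2b_2,\ldots\right)$ with the integer $m=p+q$ (which is $\geq-1$ by hypothesis): the required $m+1=p+q+1$ elements are $e_0,e_1,\ldots,e_{p+q}$. Since Statements~1 and~2 of Lemma~\ref{lem.findiff.1} are equivalent, the sequence $\left(a_0b_0,a_1b_1,a_2b_2,\ldots\right)$ satisfies both, i.e.\ is $(p+q)$-polynomial by definition, which is the claim of Lemma~\ref{lem.findiff.prod}. The only place demanding any care is the bookkeeping in the second paragraph — keeping the summation index of Lemma~\ref{lem.binom-prod} disjoint from $i$ and $j$, and verifying that it never exceeds $p+q$ — together with the two degenerate cases $p=-1$ and $q=-1$; there is no genuine obstacle here, since Lemma~\ref{lem.binom-prod} already packages the one nontrivial algebraic fact.
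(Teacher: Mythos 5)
Your proof is correct and follows essentially the same route as the paper: write both sequences in the form $\sum_{i}\binom{n}{i}c_i$ via Statement~2 of Lemma~\ref{lem.findiff.1}, expand the product of the two sums, linearize $\binom{n}{i}\binom{n}{j}$ using Lemma~\ref{lem.binom-prod}, interchange sums, and observe that the resulting coefficients vanish beyond index $p+q$. Your explicit treatment of the degenerate cases $p=-1$ or $q=-1$ is a harmless extra (the paper lets the empty sums handle these automatically), and your coefficients $e_\ell$ coincide with the paper's $w_i$.
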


\begin{proof}
[Proof of Lemma \ref{lem.findiff.prod}.]We have assumed that the sequence
$\left(  a_{0},a_{1},a_{2},\ldots\right)  $ is $p$-polynomial. In other words,
this sequence satisfies the two equivalent statements 1 and 2 of Lemma
\ref{lem.findiff.1} for $m=p$ (by the definition of ``$p$-polynomial'').
Thus, in particular, it satisfies Statement
2 of Lemma \ref{lem.findiff.1} for $m=p$. In other words, there exist $p+1$
elements $c_{0},c_{1},\ldots,c_{p}$ of $A$ such that every $n\in \NN$
satisfies $a_{n}=\sum_{i=0}^{p}\dbinom{n}{i}c_{i}$. Consider these
$c_{0},c_{1},\ldots,c_{p}$, and denote them by $u_{0},u_{1},\ldots,u_{p}$.
Thus, $u_{0},u_{1},\ldots,u_{p}$ are $p+1$ elements of $A$ with the property
that every $n\in \NN$ satisfies
\begin{equation}
a_{n}=\sum_{i=0}^{p}\dbinom{n}{i}u_{i}. \label{pf.lem.findiff.prod.a-ass}
\end{equation}

The same argument (but applied to the sequence $\left(  b_{0},b_{1}%
,b_{2},\ldots\right)  $ and the integer $q$ instead of the sequence $\left(
a_{0},a_{1},a_{2},\ldots\right)  $ and the integer $p$) helps us construct
$q+1$ elements $v_{0},v_{1},\ldots,v_{q}$ of $A$ with the property that every
$n\in \NN$ satisfies
\begin{equation}
b_{n}=\sum_{i=0}^{q}\dbinom{n}{i}v_{i}. \label{pf.lem.findiff.prod.b-ass}
\end{equation}
Consider these $q+1$ elements $v_{0},v_{1},\ldots,v_{q}$.

For each $i\in \NN$, we set
\begin{equation}
w_{i}=\sum_{\substack{\left(  j,k\right)  \in \NN\times \NN%
;\\j\leq p;\ k\leq q;\\j\leq i\leq j+k}}\dbinom{i}{j}\dbinom{j}{j+k-i}%
u_{j}v_{k}. \label{pf.lem.findiff.prod.wi=}
\end{equation}
Thus, if $i \in \NN$ satisfies $i > p+q$, then
\begin{align}
w_{i}  &  = 0 .
\label{pf.lem.findiff.prod.wi=0}
\end{align}

[\textit{Proof of \eqref{pf.lem.findiff.prod.wi=0}:}
Let $i \in \NN$ satisfy $i > p+q$.
Then, there exists no $\left(j, k\right) \in \NN \times \NN$
satisfying $j \leq p$ and $k \leq q$ and $j \leq i \leq j+k$
(since any such $\left(j, k\right)$ would satisfy
$i\leq\underbrace{j}_{\leq p}+\underbrace{k}_{\leq q}\leq p+q$,
which would contradict $i > p+q$).
Hence, the sum on the right-hand side of
\eqref{pf.lem.findiff.prod.wi=}
is empty, and thus equals $0$.
Therefore, \eqref{pf.lem.findiff.prod.wi=} rewrites as
$w_i = 0$.
This proves \eqref{pf.lem.findiff.prod.wi=0}.]

Let $n\in \NN$. Then, \eqref{pf.lem.findiff.prod.a-ass} yields
\[
a_{n}=\sum_{i=0}^{p}\dbinom{n}{i}u_{i}=\sum_{j=0}^{p}\dbinom{n}{j}u_{j}.
\]
Meanwhile, \eqref{pf.lem.findiff.prod.b-ass} yields
\[
b_{n}=\sum_{i=0}^{q}\dbinom{n}{i}v_{i}=\sum_{k=0}^{q}\dbinom{n}{k}v_{k}.
\]
Multiplying these two equalities, we obtain
\begin{align*}
a_{n}b_{n} &  =\left(  \sum_{j=0}^{p}\dbinom{n}{j}u_{j}\right)  \left(
\sum_{k=0}^{q}\dbinom{n}{k}v_{k}\right)  =\underbrace{\sum_{j=0}^{p}\sum
_{k=0}^{q}}_{=\sum_{\substack{\left(  j,k\right)  \in \NN\times
 \NN;\\j\leq p;\ k\leq q}}}\ \ \ \underbrace{\dbinom{n}{j}\dbinom{n}{k}%
}_{\substack{=\sum_{i=j}^{j+k}\dbinom{i}{j}\dbinom{j}{j+k-i}\dbinom{n}%
{i}\\\text{(by Lemma \ref{lem.binom-prod},}\\\text{applied to }m=n\text{,
}a=j\text{ and }b=k\text{)}}}u_{j}v_{k}\\
&  =\sum_{\substack{\left(  j,k\right)  \in \NN\times \NN;\\j\leq
p;\ k\leq q}}\ \ \underbrace{\sum_{i=j}^{j+k}}_{=\sum_{\substack{i\in
 \NN;\\j\leq i\leq j+k}}}\dbinom{i}{j}\dbinom{j}{j+k-i}\dbinom{n}%
{i}u_{j}v_{k}\\
&  =\underbrace{\sum_{\substack{\left(  j,k\right)  \in \NN%
\times \NN;\\j\leq p;\ k\leq q}}\ \ \sum_{\substack{i\in \NN%
;\\j\leq i\leq j+k}}}_{=\sum_{i\in \NN}\ \ \sum_{\substack{\left(
j,k\right)  \in \NN\times \NN;\\j\leq p;\ k\leq q;\\j\leq i\leq
j+k}}}\dbinom{i}{j}\dbinom{j}{j+k-i}\dbinom{n}{i}u_{j}v_{k}\\
&  =\sum_{i\in \NN}\ \ \sum_{\substack{\left(  j,k\right)  \in
 \NN\times \NN;\\j\leq p;\ k\leq q;\\j\leq i\leq j+k}}\dbinom
{i}{j}\dbinom{j}{j+k-i}\dbinom{n}{i}u_{j}v_{k}\\
&  =\sum_{i\in \NN}\dbinom{n}{i}\underbrace{\sum_{\substack{\left(
j,k\right)  \in \NN\times \NN;\\j\leq p;\ k\leq q;\\j\leq i\leq
j+k}}\dbinom{i}{j}\dbinom{j}{j+k-i}u_{j}v_{k}}_{\substack{=w_{i}\\\text{(by
\eqref{pf.lem.findiff.prod.wi=})}}}\\
&  =\sum_{i\in \NN}\dbinom{n}{i}w_{i}=\underbrace{\sum_{\substack{i\in
 \NN;\\i\leq p+q}}}_{=\sum_{i=0}^{p+q}}\dbinom{n}{i}w_{i}+\sum
_{\substack{i\in \NN;\\i>p+q}}\dbinom{n}{i}\underbrace{w_{i}%
}_{\substack{=0\\\text{(by \eqref{pf.lem.findiff.prod.wi=0})}}}\\
&  =\sum_{i=0}^{p+q}\dbinom{n}{i}w_{i}+\underbrace{\sum_{\substack{i\in
 \NN;\\i>p+q}}\dbinom{n}{i}0}_{=0}=\sum_{i=0}^{p+q}\dbinom{n}{i}w_{i}.
\end{align*}

Forget that we fixed $n$. We thus have shown that every $n\in \NN$
satisfies $a_{n}b_{n}=\sum_{i=0}^{p+q}\dbinom{n}{i}w_{i}$. Hence, there exist
$p+q+1$ elements $c_{0},c_{1},\ldots,c_{p+q}$ of $A$ such that every
$n\in \NN$ satisfies $a_{n}b_{n}=\sum_{i=0}^{p+q}\dbinom{n}{i}c_{i}$
(namely, these $p+q+1$ elements are given by $c_{i}=w_{i}$). In other words,
Statement 2 of Lemma \ref{lem.findiff.1} with $m$ and $\left(  a_{0}%
,a_{1},a_{2},\ldots\right)  $ replaced by $p+q$ and $\left(  a_{0}b_{0}%
,a_{1}b_{1},a_{2}b_{2},\ldots\right)  $ is satisfied. Thus, the two equivalent
statements 1 and 2 of Lemma \ref{lem.findiff.1} with $m$ and $\left(
a_{0},a_{1},a_{2},\ldots\right)  $ replaced by $p+q$ and $\left(  a_{0}%
b_{0},a_{1}b_{1},a_{2}b_{2},\ldots\right)  $ are satisfied. In other words, the
sequence $\left(  a_{0}b_{0},a_{1}b_{1},a_{2}b_{2},\ldots\right)  $ is
$\left(  p+q\right)  $-polynomial (by the definition of
``$\left(  p+q\right)  $-polynomial''). This proves Lemma
\ref{lem.findiff.prod}.
\end{proof}

\begin{lemma}
\label{lem.L.polynomiality}Let $B$ be a $\kk$-bialgebra. Let $b\in
B$. Let $m\geq-1$ be an integer.

Then, $m$ is a degree-upper bound of $b$ if and only if the sequence
\[
\left(
\id^{\oast0}\left(  b\right)  ,\id^{\oast1}\left(  b\right)  ,\id^{\oast2}%
\left(  b\right)  ,\ldots\right)
\]
is $m$-polynomial.
\end{lemma}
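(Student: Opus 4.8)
The plan is to reduce this statement to the combinatorial criterion of Lemma~\ref{lem.findiff.1}, after expressing the iterated convolution powers $\id^{\oast n}(b)$ in terms of the maps $(\eta\epsilon - \id)^{\oast k}(b)$, and vice versa. The link with finite differences should come from the binomial expansion of $\id = \eta\epsilon - (\eta\epsilon - \id)$ in the convolution algebra $(\Hom(B,B),\oast)$, whose unity is $\eta\epsilon$. Since $\eta\epsilon$ and $\eta\epsilon - \id$ commute under $\oast$ (indeed $\eta\epsilon$ is the unity), the binomial theorem applies and gives, for each $n \in \NN$,
\[
\id^{\oast n}
= \bigl(\eta\epsilon - (\eta\epsilon - \id)\bigr)^{\oast n}
= \sum_{i=0}^{n} (-1)^i \binom{n}{i} (\eta\epsilon - \id)^{\oast i} ,
\]
using $(\eta\epsilon)^{\oast(n-i)} = \eta\epsilon$ = unity. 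Applying this to $b$ and abbreviating $a_i := (\eta\epsilon - \id)^{\oast i}(b)$ and $\alpha_n := \id^{\oast n}(b)$, we get $\alpha_n = \sum_{i=0}^{n} (-1)^i \binom{n}{i} a_i$ for all $n$.

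Next I would invert this relation: by Lemma~\ref{lem.binom-altsum2} (applied with $A = B$ viewed as an abelian group, and the sequences $(a_i)$ and $(\alpha_n)$ in the roles of $(a_n)$ and $(b_n)$ there), the identity $\alpha_n = \sum_{i=0}^n (-1)^i \binom{n}{i} a_i$ for all $n$ is equivalent to $a_n = \sum_{i=0}^n (-1)^i \binom{n}{i} \alpha_i$ for all $n$. So in fact the two sequences $(a_i)_{i\geq 0}$ and $(\alpha_n)_{n\geq 0}$ are binomial transforms of each other. Now unwind the definitions: by Definition~\ref{local_nilp}, $m$ is a degree-upper bound of $b$ precisely when $(\eta\epsilon - \id)^{\oast n}(b) = 0$ for every integer $n > m$ — that is, when $a_n = 0$ for every integer $n > m$. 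On the other hand, this condition $a_n = \sum_{i=0}^n (-1)^i \binom{n}{i}\alpha_i = 0$ for all $n > m$ is exactly Statement~1 of Lemma~\ref{lem.findiff.1} for the sequence $(\alpha_0, \alpha_1, \alpha_2, \ldots)$. By Lemma~\ref{lem.findiff.1}, Statement~1 holds for $(\alpha_n)$ if and only if Statement~2 holds, i.e., if and only if $(\alpha_n)$ is $m$-polynomial. Since $(\alpha_n) = (\id^{\oast 0}(b), \id^{\oast 1}(b), \id^{\oast 2}(b), \ldots)$, this is precisely the claim.

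The only genuinely load-bearing point — and the one I'd expect to need the most care — is justifying the binomial expansion of $\id^{\oast n}$ in the convolution algebra. The subtlety is that $B$ need not be commutative, so one should be explicit that the relevant elements commute: since $\eta\epsilon$ is the two-sided unity of $(\Hom(B,B), \oast)$, it commutes with everything, in particular with $\eta\epsilon - \id$, and so the standard binomial theorem is valid for the two summands $\eta\epsilon$ and $-(\eta\epsilon - \id)$ whose $\oast$-sum is $\id$. (Alternatively, one may simply prove $\id^{\oast n} = \sum_{i=0}^n (-1)^i \binom{n}{i}(\eta\epsilon - \id)^{\oast i}$ by a direct induction on $n$ using Pascal's recurrence, which sidesteps any appeal to a general binomial theorem.) Once that identity is in hand, everything else is bookkeeping: matching the notational conventions of Lemmas~\ref{lem.binom-altsum2} and~\ref{lem.findiff.1} and reading off the definition of degree-upper bound. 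No commutativity of $B$ is needed, and $m \geq -1$ is exactly the range allowed in Lemma~\ref{lem.findiff.1}, so the equivalence holds verbatim.
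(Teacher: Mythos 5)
Your proof is correct and follows essentially the same route as the paper: a binomial expansion in the convolution algebra $\left(\Hom(B,B),\oast\right)$ (valid because $\eta\epsilon$ is its unity), followed by matching the degree-upper-bound condition with Statement~1 of Lemma~\ref{lem.findiff.1}. The only difference is that the paper expands $\left(\eta\epsilon-\id\right)^{\oast n}$ directly in terms of the $\id^{\oast i}$, which immediately yields the identity needed for Statement~1, whereas you expand in the opposite direction and then invoke Lemma~\ref{lem.binom-altsum2} to invert -- a harmless extra step.
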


\begin{proof}
[Proof of Lemma \ref{lem.L.polynomiality}.]The elements $\eta\epsilon$ and
$\id$ of the convolution algebra $\left(  \Hom
\left(  B,B\right)  ,\oast\right)  $ commute (since $\eta\epsilon$ is the
unity of this algebra). Thus, the binomial formula shows that
\[
\left(  \eta\epsilon-\id\right)  ^{\oast n}=\sum_{i=0}%
^{n}\left(  -1\right)  ^{i}\dbinom{n}{i}\id^{\oast i}%
\]
for every $n\in \NN$ (again because $\eta\epsilon$ is the
unity of the convolution algebra). Hence, for every $n\in \NN$,
we have
\begin{equation}
\left(  \eta\epsilon-\id\right)  ^{\oast n}\left(  b\right)
=\sum_{i=0}^{n}\left(  -1\right)  ^{i}\dbinom{n}{i}
\id^{\oast i}\left(  b\right)  . \label{pf.lem.L.polynomiality.2}
\end{equation}

Now, we have the following chain of equivalences:
\begin{align*}
\  &  \left(  m\text{ is a degree-upper bound of }b\right) \\
\Longleftrightarrow\  &  \left(  \left(  \eta\epsilon-\id%
\right)  ^{\oast n}\left(  b\right)  =0\text{ for every integer }n>m\right) \\
&  \qquad\qquad\left(  \text{by the definition of a ``degree-upper bound''}
\right) \\
\Longleftrightarrow\ \  &  \left(  \sum_{i=0}^{n}\left(  -1\right)
^{i}\dbinom{n}{i}\id^{\oast i}\left(  b\right)
=0\text{ for every integer }n>m\right) \\
&  \qquad\qquad\left(  \text{by \eqref{pf.lem.L.polynomiality.2}}\right) \\
\Longleftrightarrow\  &  \left(  \text{Statement 1 of Lemma
\ref{lem.findiff.1} holds for }A=B\text{ and }a_{i}=
\id^{\oast i}\left(  b\right)  \right) \\
\Longleftrightarrow\  &  \left(  \text{the sequence }\left(
\id^{\oast0}\left(  b\right)  ,\id^{\oast1}\left(  b\right)  ,\id^{\oast2}%
\left(  b\right)  ,\ldots\right)  \text{ is }m\text{-polynomial}\right) \\
&  \qquad\qquad\left(
\text{by the definition of ``}m\text{-polynomial''}\right)  .
\end{align*}
This proves Lemma \ref{lem.L.polynomiality}.
\end{proof}

\begin{proposition}
\label{prop.L.pq}Let $B$ be a commutative $\kk$-bialgebra. Let $b,c\in B$. Let
$p,q\geq-1$ be two integers with $p+q\geq-1$. Assume that $p$ is a
degree-upper bound of $b$. Assume that $q$ is a degree-upper bound of $c$.
Then, $p+q$ is a degree-upper bound of $bc$.
\end{proposition}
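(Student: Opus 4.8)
The plan is to assemble the proposition from Lemmas~\ref{lem.BtoA.preserveidk}, \ref{lem.findiff.prod} and~\ref{lem.L.polynomiality}, translating everything into the language of $m$-polynomial sequences. First I would apply Lemma~\ref{lem.L.polynomiality} to $b$ (with the integer $m = p$): since $p$ is a degree-upper bound of $b$, this shows that the sequence $\left(\id^{\oast 0}\left(b\right), \id^{\oast 1}\left(b\right), \id^{\oast 2}\left(b\right), \ldots\right)$ is $p$-polynomial. In the same way, applying Lemma~\ref{lem.L.polynomiality} to $c$ (with $m = q$) shows that the sequence $\left(\id^{\oast 0}\left(c\right), \id^{\oast 1}\left(c\right), \id^{\oast 2}\left(c\right), \ldots\right)$ is $q$-polynomial.

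The crucial step is to identify the sequence attached to $bc$ as the termwise product of the two sequences above. For each $k \in \NN$, Lemma~\ref{lem.BtoA.preserveidk} (which is where the commutativity of $B$ enters) tells us that $\id^{\oast k} : B \to B$ is a $\kk$-algebra homomorphism, and hence $\id^{\oast k}\left(bc\right) = \id^{\oast k}\left(b\right) \cdot \id^{\oast k}\left(c\right)$. Thus the sequence $\left(\id^{\oast 0}\left(bc\right), \id^{\oast 1}\left(bc\right), \id^{\oast 2}\left(bc\right), \ldots\right)$ is precisely the termwise product of $\left(\id^{\oast k}\left(b\right)\right)_{k \geq 0}$ and $\left(\id^{\oast k}\left(c\right)\right)_{k \geq 0}$.

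Now I would invoke Lemma~\ref{lem.findiff.prod}, applied to $A = B$, the integers $p$ and $q$, and the sequences $a_k = \id^{\oast k}\left(b\right)$ and $b_k = \id^{\oast k}\left(c\right)$; its hypotheses $p \geq -1$, $q \geq -1$ and $p+q \geq -1$ are exactly the assumptions of the proposition. The conclusion is that the termwise product $\left(\id^{\oast k}\left(bc\right)\right)_{k \geq 0}$ is $\left(p+q\right)$-polynomial. Finally, applying Lemma~\ref{lem.L.polynomiality} in the converse direction (to the element $bc$ and the integer $m = p+q$) turns this back into the statement that $p+q$ is a degree-upper bound of $bc$, which is the claim. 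There is no genuine obstacle here --- the proof is bookkeeping on top of the earlier lemmas --- but two small points deserve attention: that the numerical hypotheses $p, q \geq -1$ and $p+q \geq -1$ of Lemma~\ref{lem.findiff.prod} coincide with those given, and that commutativity of $B$ is exactly what is needed to apply Lemma~\ref{lem.BtoA.preserveidk} and obtain the multiplicativity of $\id^{\oast k}$.
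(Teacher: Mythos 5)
Your proof is correct and follows essentially the same route as the paper's own proof: both use Lemma~\ref{lem.BtoA.preserveidk} to see that $\left(\id^{\oast k}\left(bc\right)\right)_{k\geq 0}$ is the termwise product of the sequences for $b$ and $c$, translate degree-upper bounds into $m$-polynomiality via Lemma~\ref{lem.L.polynomiality}, and conclude with Lemma~\ref{lem.findiff.prod}. Nothing is missing.
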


\begin{proof}
[Proof of Proposition \ref{prop.L.pq}.]For each $k\in \NN$, we know that
the map $\id^{\oast k}:B\rightarrow B$ is a
$\kk$-algebra homomorphism (by Lemma \ref{lem.BtoA.preserveidk}), and
thus satisfies $\id^{\oast k}\left(  bc\right)
=\id^{\oast k}\left(  b\right)  \cdot
\id^{\oast k}\left(  c\right)  $. Hence,
\begin{align}
&  \left(  \id^{\oast0}\left(  bc\right)
,\id^{\oast1}\left(  bc\right)  ,
\id^{\oast2}\left(  bc\right)  ,\ldots\right) \nonumber\\
&  =\left(  \id^{\oast0}\left(  b\right)
\id^{\oast0}\left(  c\right)  ,
\id^{\oast1}\left(  b\right)  \id^{\oast1}%
\left(  c\right)  ,\id^{\oast2}\left(  b\right)
\id^{\oast2}\left(  c\right)  ,\ldots\right)  .
\label{pf.prop.L.pq.id-prod}
\end{align}

Lemma \ref{lem.L.polynomiality} (applied to $m=p$) shows that $p$ is a
degree-upper bound of $b$ if and only if the sequence $\left(
\id^{\oast0}\left(  b\right)  ,
\id^{\oast1}\left(  b\right)  ,\id^{\oast2}%
\left(  b\right)  ,\ldots\right)  $ is $p$-polynomial. Thus, the sequence
$\left(  \id^{\oast0}\left(  b\right)
,\id^{\oast1}\left(  b\right)  ,\id^{\oast2}\left(  b\right)  ,
\ldots\right)  $ is $p$-polynomial (since
$p$ is a degree-upper bound of $b$). The same argument (applied to $c$ and $q$
instead of $b$ and $p$) shows that the sequence
$\left(  \id^{\oast0}\left(  c\right)  ,\id^{\oast1}%
\left(  c\right)  ,\id^{\oast2}\left(  c\right)
,\ldots\right)  $ is $q$-polynomial. Hence, Lemma \ref{lem.findiff.prod}
(applied to $A=B$, $a_{i}=\id^{\oast i}\left(
b\right)  $ and $b_{i}=\id^{\oast i}\left(  c\right)
$) shows that $\left(  \id^{\oast0}\left(  b\right)
\id^{\oast0}\left(  c\right)  ,\id^{\oast1}\left(  b\right)  \id^{\oast1}%
\left(  c\right)  ,\id^{\oast2}\left(  b\right)
\id^{\oast2}\left(  c\right)  ,\ldots\right)  $ is a
$\left(  p+q\right)  $-polynomial sequence of elements of $B$. In view of
\eqref{pf.prop.L.pq.id-prod}, this rewrites as follows: \\ $\left(
\id^{\oast0}\left(  bc\right)  ,\id^{\oast1}\left(  bc\right)  ,\id^{\oast2}%
\left(  bc\right)  ,\ldots\right)  $ is a $\left(  p+q\right)  $-polynomial
sequence of elements of $B$.

But Lemma \ref{lem.L.polynomiality} (applied to $p+q$ and $bc$ instead of $m$
and $b$) shows that $p+q$ is a degree-upper bound of $bc$ if and only if the
sequence $\left(  \id^{\oast0}\left(  bc\right)
,\id^{\oast1}\left(  bc\right)  ,\id^{\oast2}\left(  bc\right)  ,
\ldots\right)  $ is $\left(  p+q\right)
$-polynomial. Hence, $p+q$ is a degree-upper bound of $bc$ (since the sequence
$\left(  \id^{\oast0}\left(  bc\right)
,\id^{\oast1}\left(  bc\right)  ,\id^{\oast2}\left(  bc\right)  ,
\ldots\right)  $ is $\left(  p+q\right)
$-polynomial). This proves Proposition \ref{prop.L.pq}.
\end{proof}

\begin{corollary}
\label{cor.L.subalg}Let $B$ be a commutative $\kk$-bialgebra. Let $L$ denote
the set of all id-unipotent elements of $B$. Then, $L$ is a
$\kk$-subalgebra of $B$.
\end{corollary}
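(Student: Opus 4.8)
The plan is to deduce Corollary~\ref{cor.L.subalg} directly from the results already established, using essentially no new ideas. First I would observe that $L$ is closed under addition and $\kk$-scaling: this is exactly the content of Remark~\ref{rmk.L.submodule}, where it was noted that if $u,v \in B$ are id-unipotent with degree-upper bounds $p$ and $q$, then $\lambda u + \mu v$ is id-unipotent with degree-upper bound $\max\left\{p,q\right\}$ for any $\lambda,\mu \in \kk$ (this follows immediately from the linearity of $\left(\eta\epsilon - \id\right)^{\oast n}$ and the definition of a degree-upper bound). In particular $0 \in L$ (it has degree-upper bound $-1$, or indeed any integer), so $L$ is a $\kk$-submodule of $B$.

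Next I would show that $L$ is closed under multiplication and contains $1_B$. For the product: let $b,c \in L$. Since $b$ is id-unipotent, it has some degree-upper bound, and by Lemma~\ref{lem.dub.nonneg} we may choose one that lies in $\NN$; call it $p$. Similarly pick a degree-upper bound $q \in \NN$ of $c$. Then $p,q \geq -1$ and $p + q \geq -1$, so Proposition~\ref{prop.L.pq} applies and shows that $p+q$ is a degree-upper bound of $bc$; hence $bc$ is id-unipotent, i.e., $bc \in L$. For the unit: the element $1_B = \eta\left(1_\kk\right)$ satisfies $\left(\eta\epsilon - \id\right)\left(1_B\right) = \eta\epsilon\left(1_B\right) - 1_B = 1_B - 1_B = 0$, and therefore $\left(\eta\epsilon - \id\right)^{\oast n}\left(1_B\right) = 0$ for every $n \geq 1$ (since $\left(\eta\epsilon - \id\right)^{\oast n} = \left(\eta\epsilon - \id\right)^{\oast\left(n-1\right)} \oast \left(\eta\epsilon - \id\right)$ and convolution with the zero map... more carefully, one checks $\left(\eta\epsilon - \id\right)^{\oast 1}\left(1_B\right) = 0$ implies all higher powers vanish on $1_B$ because $1_B$ is grouplike, so $\Delta^{\left(n-1\right)}\left(1_B\right) = 1_B^{\otimes n}$ and each tensor factor gets hit by a map sending $1_B$ to $0$). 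Thus $0$ is a degree-upper bound of $1_B$, so $1_B \in L$.

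Combining these three facts — $L$ is a $\kk$-submodule, $L$ is closed under multiplication, and $1_B \in L$ — we conclude that $L$ is a $\kk$-subalgebra of $B$, proving Corollary~\ref{cor.L.subalg}.

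I do not anticipate any real obstacle here: the entire substance of the statement has been pre-packaged into Proposition~\ref{prop.L.pq} (closure under products), and the submodule part is just Remark~\ref{rmk.L.submodule}. The only mild subtlety is the bookkeeping around degree-upper bounds possibly being negative — this is precisely why Lemma~\ref{lem.dub.nonneg} was proved, and invoking it lets us assume $p,q \in \NN$ so that the hypothesis $p+q \geq -1$ of Proposition~\ref{prop.L.pq} is automatic. (One could alternatively handle the case $b = 0$ or $c = 0$ separately, since then $bc = 0 \in L$ trivially, and otherwise apply Lemma~\ref{lem.dub.nonneg}; either way the argument is routine.)
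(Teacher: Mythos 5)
Your proof is correct and follows essentially the same route as the paper's: the submodule part is Remark~\ref{rmk.L.submodule}, the unit is handled by a direct computation using that $1_B$ is grouplike, and closure under products is Proposition~\ref{prop.L.pq} after arranging nonnegative degree-upper bounds. The paper obtains the nonnegativity simply by replacing a negative degree-upper bound with $0$ (any larger integer is again a degree-upper bound), which avoids the case split on $b=0$ that your appeal to Lemma~\ref{lem.dub.nonneg} forces, but this is only a cosmetic difference.
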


\begin{proof}
[Proof of Corollary \ref{cor.L.subalg}.]We have already seen in Remark
\ref{rmk.L.submodule} that $L$ is a $\kk$-submodule of $B$. Thus, it
suffices to show that $1\in L$ and that all $b,c\in L$ satisfy $bc\in L$.

It is easy to see that $1\in L$: Indeed, it is easy to see (by induction) that
$\left(  \eta\epsilon-\id\right)  ^{\oast n}\left(  1\right)
=0$ for every positive integer $n$. Thus, $0$ is a degree-upper bound of $1$.
Hence, the element $1$ is id-unipotent, i.e., we have $1\in L$.

It remains to show that all $b,c\in L$ satisfy $bc\in L$. So let $b,c\in L$ be
arbitrary. Thus, $b$ and $c$ are two id-unipotent elements of $B$. Clearly,
$b\in B$ has a degree-upper bound (since $b$ is id-unipotent); let us denote
this bound by $p$. We WLOG assume that $p$ is nonnegative (since otherwise, we
can replace $p$ by $0$). Likewise, we can find a nonnegative degree-upper
bound $q$ of $c$. Now, Proposition \ref{prop.L.pq} shows that $p+q$ is a
degree-upper bound of $bc$. Hence, $bc$ is id-unipotent. In other words,
$bc\in L$.

We thus have shown that all $b,c\in L$ satisfy $bc\in L$. As we have seen,
this concludes the proof of Corollary \ref{cor.L.subalg}.
\end{proof}

\subsection{Appendix: The id-unipotents do not form a subcoalgebra}

In this subsection, we shall present an example in which the id-unipotent
elements in a commutative $\mathbf{k}$-bialgebra do not form a $\mathbf{k}%
$-subcoalgebra. This example was found by GPT-5.5 in June 2026:

\begin{example}
\label{exa.L.not-subcoalg}Let $\mathbf{k}$ be a field of characteristic $0$.
Let $W$ be the free monoid on two generators $a,b$. Let $\varnothing\in W$ be
the empty word (i.e., the neutral element of $W$). Consider the commutative
$\mathbf{k}$-algebra $\mathbf{k}^{W}$ of all maps $f:W\rightarrow\mathbf{k}$,
with pointwise algebra structure (i.e., with $\left(  fg\right)  \left(
w\right)  =f\left(  w\right)  g\left(  w\right)  $ for all $w\in W$ and
$f,g\in\mathbf{k}^{W}$).

Let $\left[  \mathcal{S}\right]  $ denote the truth value of a logical
statement $\mathcal{S}$ (that is, the number $1$ if $\mathcal{S}$ is true, and
the number $0$ otherwise). Define four elements $E,p,s,c\in\mathbf{k}^{W}$ by
setting%
\begin{align*}
E\left(  w\right)    & =\left[  w=\varnothing\right]  ;\\
p\left(  w\right)    & =\left[  w\text{ begins with }b\right]  ;\\
s\left(  w\right)    & =\left[  w\text{ ends with }a\right]  ;\\
c\left(  w\right)    & =\left(  \text{\# of cyclic }ab\text{'s in }w\right)
\end{align*}
for each $w\in W$. Here:

\begin{itemize}
\item A word is said to \textquotedblleft begin with $b$\textquotedblright\ if
it is nonempty and its first letter is $b$.

\item A word is said to \textquotedblleft end with $a$\textquotedblright\ if
it is nonempty and its last letter is $a$.

\item A \textquotedblleft cyclic $ab$\textquotedblright\ in a word $w$ is a
position of $w$ that contains the letter $a$ and is followed directly by a
letter $b$; here, we understand that the first letter of $w$ follows directly
the last letter of $w$ (thus \textquotedblleft cyclic\textquotedblright). The
empty word has no cyclic $ab$'s. For instance, the word $abbaaabab$ has $3$
cyclic $ab$'s, whereas the word $babaa$ has $2$.
\end{itemize}

Also, let $1_{\mathbf{k}^{W}}\in\mathbf{k}^{W}$ be the unity of the
$\mathbf{k}$-algebra $\mathbf{k}^{W}$ (that is, the map sending each word $w$
to $1$). Define two further elements $r=sp$ and $A=1_{\mathbf{k}^{W}}-E$ of
$\mathbf{k}^{W}$. Note that $r\left(  w\right)  =s\left(  w\right)  p\left(
w\right)  =\left[  w\text{ ends with }a\text{ and begins with }b\right]  $ is
$1$ precisely when $w$ has a cyclic $ab$ that \textquotedblleft wraps around
the cycle\textquotedblright; thus, $\left(  c-r\right)  \left(  w\right)  $
counts the non-cyclic $ab$'s in the word $w$. Of course, $A\left(  w\right)
=\left[  w\neq\varnothing\right]  $ for any word $w$.

It is now easy to see that any two words $u,v\in W$ satisfy the four
equalities%
\begin{align}
E\left(  uv\right)    & =E\left(  u\right)  \cdot E\left(  v\right)
;\label{eq.exa.L.not-subcoalg.uv1}\\
p\left(  uv\right)    & =p\left(  u\right)  \cdot1_{\mathbf{k}^{W}}\left(
v\right)  +E\left(  u\right)  \cdot p\left(  v\right)
;\label{eq.exa.L.not-subcoalg.uv2}\\
s\left(  uv\right)    & =1_{\mathbf{k}^{W}}\left(  u\right)  \cdot s\left(
v\right)  +s\left(  u\right)  \cdot E\left(  v\right)
;\label{eq.exa.L.not-subcoalg.uv3}\\
c\left(  uv\right)    & =c\left(  u\right)  \cdot1_{\mathbf{k}^{W}}\left(
v\right)  +1_{\mathbf{k}^{W}}\left(  u\right)  \cdot c\left(  v\right)
+s\left(  u\right)  \cdot p\left(  v\right)  +p\left(  u\right)  \cdot
s\left(  v\right)  \nonumber\\
& \ \ \ \ \ \ \ \ \ \ -r\left(  u\right)  \cdot A\left(  v\right)  -A\left(
u\right)  \cdot r\left(  v\right)  .\label{eq.exa.L.not-subcoalg.uv4}%
\end{align}

If $B$ is any $\kk$-subalgebra of $\mathbf{k}^W$, then
there is a canonical $\mathbf{k}$-algebra morphism $\iota:B\otimes
B\rightarrow\mathbf{k}^{W}\otimes\mathbf{k}^{W}\rightarrow\mathbf{k}^{W\times
W}$ obtained from the inclusion map $B\rightarrow\mathbf{k}^{W}$ and the map
\begin{align}
\mathbf{k}^{W}\otimes\mathbf{k}^{W}  & \rightarrow\mathbf{k}^{W\times
W},\nonumber\\
f\otimes g  & \mapsto\left(  \left(  u,v\right)  \mapsto f\left(  u\right)
g\left(  v\right)  \right)  .\label{eq.exa.L.not-subcoalg.iota2}%
\end{align}
This morphism $\iota$ is easily seen to be injective. (Here, we use the fact
that $\mathbf{k}$ is a field. This guarantees that the map $B\otimes
B\rightarrow\mathbf{k}^{W}\otimes\mathbf{k}^{W}$ is injective. It remains to
show that (\ref{eq.exa.L.not-subcoalg.iota2}) is injective. Viewing
$\mathbf{k}^{W}$ and $\mathbf{k}^{W\times W}$ as the $\mathbf{k}$-linear duals
of the free $\mathbf{k}$-vector spaces $\mathbf{k}^{\left(  W\right)  }$ and
$\mathbf{k}^{\left(  W\times W\right)  }$, and recalling that $\mathbf{k}%
^{\left(  W\times W\right)  }\cong\mathbf{k}^{\left(  W\right)  }%
\otimes\mathbf{k}^{\left(  W\right)  }$ canonically, this boils down to showing
that the canonical $\mathbf{k}$-linear map $M^{\vee}\otimes N^{\vee
}\rightarrow\left(  M\otimes N\right)  ^{\vee}$ is injective whenever $M$ and
$N$ are two $\mathbf{k}$-vector spaces. But this is a particular case of
Proposition \ref{DualInj} below.) Thus, we can view $B\otimes B$ as a
$\mathbf{k}$-subalgebra of $\mathbf{k}^{W\times W}$ via the embedding $\iota$.

Now, let $B$ be the $\mathbf{k}$-subalgebra of $\mathbf{k}^{W}$ generated by
$E,p,s,c$. Then, $B$ contains $1_{\mathbf{k}^{W}}$, $r$ and $A$ as well. We
claim that $B$ becomes a $\mathbf{k}$-bialgebra with $\Delta:B\rightarrow
B\otimes B$ and $\epsilon:B\rightarrow\mathbf{k}$ defined by the equalities%
\begin{align*}
\left(  \Delta\left(  f\right)  \right)  \left(  u,v\right)    & =f\left(
uv\right)  \ \ \ \ \ \ \ \ \ \ \text{for all }f\in B\text{ and }u,v\in
W\ \ \ \ \ \ \ \ \ \ \text{and}\\
\epsilon\left(  f\right)    & =f\left(  \varnothing\right)
\ \ \ \ \ \ \ \ \ \ \text{for all }f\in B,
\end{align*}
where $\left(  \Delta\left(  f\right)  \right)  \left(  u,v\right)  $ denotes
the value of $\iota\left(  \Delta\left(  f\right)  \right)  \in\mathbf{k}%
^{W\times W}$ on the pair $\left(  u,v\right)  \in W\times W$. Indeed, a
priori, our definition of $\Delta\left(  f\right)  $ only defines a map from
$W\times W$ to $\mathbf{k}$, not an element of $B\otimes B$; however, the
equalities (\ref{eq.exa.L.not-subcoalg.uv1}), (\ref{eq.exa.L.not-subcoalg.uv2}%
), (\ref{eq.exa.L.not-subcoalg.uv3}) and (\ref{eq.exa.L.not-subcoalg.uv4})
show that%
\begin{align*}
\Delta\left(  E\right)    & =E\otimes E,\\
\Delta\left(  p\right)    & =p\otimes1_{\mathbf{k}^{W}}+E\otimes p,\\
\Delta\left(  s\right)    & =1_{\mathbf{k}^{W}}\otimes s+s\otimes E,\\
\Delta\left(  c\right)    & =c\otimes1_{\mathbf{k}^{W}}+1_{\mathbf{k}^{W}%
}\otimes c+s\otimes p+p\otimes s-r\otimes A-A\otimes r
\end{align*}
all lie in the image of $\iota$, and thus $\Delta\left(  B\right)  \subseteq
B\otimes B$ is well-defined. (Note also that $\epsilon\left(  E\right)  =1$
and $\epsilon\left(  p\right)  =\epsilon\left(  s\right)  =\epsilon\left(
c\right)  =0$.) Note that our construction of $\Delta$ shows that if $f\in B$
and $w\in W$, then%
\begin{equation}
\left(  \operatorname*{id}\nolimits^{\oast k}\left(  f\right)  \right)
\left(  w\right)  =f\left(  w^{k}\right)  \ \ \ \ \ \ \ \ \ \ \text{for all
}k\in\mathbb{N}.\label{eq.exa.L.not-subcoalg.wk}%
\end{equation}

Now, it is easy to see that each $k\in\mathbb{N}$ and $w\in W$ satisfy
$c\left(  w^{k}\right)  =kc\left(  w\right)  $. Hence, by
(\ref{eq.exa.L.not-subcoalg.wk}), we conclude that%
\[
\operatorname*{id}\nolimits^{\oast k}\left(  c\right)
=kc\ \ \ \ \ \ \ \ \ \ \text{for all }k\in\mathbb{N}.
\]
Therefore, the sequence $\left(  \operatorname{id}^{\oast0}\left(
c\right)  ,\operatorname{id}^{\oast1}\left(  c\right)
,\operatorname{id}^{\oast2}\left(  c\right)  ,\ldots\right)  $ is
$1$-polynomial. Hence, by Lemma \ref{lem.L.polynomiality}, the number $1$ is a
degree-upper bound of $c$. Therefore, $c$ is id-unipotent. In other words,
$c\in L$.

However, $\Delta\left(  c\right)  \notin L\otimes L$. In order to see this, we
observe that any $n,m\in\mathbb{N}$ satisfy%
\begin{equation}
\left(  \Delta\left(  c\right)  \right)  \left(  a^{n},b^{m}\right)  =c\left(
a^{n}b^{m}\right)  =%
\begin{cases}
1, & \text{if }n>0\text{ and }m>0;\\
0, & \text{otherwise}.
\end{cases}
\label{eq.exa.L.not-subcoalg.c1}%
\end{equation}
If $\Delta\left(  c\right)  $ belonged to $L\otimes L$, then there would be a
finite set $I$ and some elements  $f_{i},g_{i}\in L$ for each $i\in I$ such
that $\Delta\left(  c\right)  =\sum_{i\in I}f_{i}\otimes g_{i}$. Then, we
would have%
\[
\left(  \Delta\left(  c\right)  \right)  \left(  a^{n},b^{m}\right)  =\left(
\sum_{i\in I}f_{i}\otimes g_{i}\right)  \left(  a^{n},b^{m}\right)
=\sum_{i\in I}f_{i}\left(  a^{n}\right)  g_{i}\left(  b^{m}\right)  ,
\]
and therefore%
\begin{equation}
\sum_{i\in I}f_{i}\left(  a^{n}\right)  g_{i}\left(  b^{m}\right)  =%
\begin{cases}
1, & \text{if }n>0\text{ and }m>0;\\
0, & \text{otherwise}%
\end{cases}
\label{eq.exa.L.not-subcoalg.c2}%
\end{equation}
(by (\ref{eq.exa.L.not-subcoalg.c1})).

However, for each $i\in I$, the map $f_{i}$ is id-unipotent (since $f_{i}\in
L$); thus, the sequence $\left(  \operatorname{id}^{\oast0}\left(
f_{i}\right)  ,\operatorname{id}^{\oast1}\left(  f_{i}\right)
,\operatorname{id}^{\oast2}\left(  f_{i}\right)  ,\ldots\right)  $ is
$r$-polynomial for some $r\in\mathbb{N}$ (by Lemma \ref{lem.L.polynomiality}).
Hence, the sequence $\left(  \left(  \operatorname{id}^{\oast0}\left(
f_{i}\right)  \right)  \left(  a\right)  ,\left(  \operatorname{id}%
^{\oast1}\left(  f_{i}\right)  \right)  \left(  a\right)  ,\left(
\operatorname{id}^{\oast2}\left(  f_{i}\right)  \right)  \left(
a\right)  ,\ldots\right)  $ must also be $r$-polynomial. But this latter
sequence is simply the sequence \newline
$\left(  f_{i}\left(  a^{0}\right)
,f_{i}\left(  a^{1}\right)  ,f_{i}\left(  a^{2}\right)  ,\ldots\right)  $ (by
(\ref{eq.exa.L.not-subcoalg.wk})). Thus, the sequence $\left(  f_{i}\left(
a^{0}\right)  ,f_{i}\left(  a^{1}\right)  ,f_{i}\left(  a^{2}\right)
,\ldots\right)  $ must be $r$-polynomial. In other words, $f_{i}\left(
a^{n}\right)  $ is a polynomial function in $n$ with coefficients in
$\mathbf{k}$ (here, we use $\operatorname*{char}\mathbf{k}=0$). Similarly,
$g_{i}\left(  b^{m}\right)  $ is a polynomial function in $m$ with
coefficients in $\mathbf{k}$. Therefore, the finite sum $\sum_{i\in I}%
f_{i}\left(  a^{n}\right)  g_{i}\left(  b^{m}\right)  $ is a polynomial
function in $n$ and $m$ with coefficients in $\mathbf{k}$. But the equality
(\ref{eq.exa.L.not-subcoalg.c2}) shows that this polynomial function has the
value $1$ on all pairs $\left(  n,m\right)  $ with $n>0$ and $m>0$; thus, it
must be identically $1$ (since a polynomial function that is constant on an
infinite grid must be constant everywhere). This contradicts the $n=0$
case of (\ref{eq.exa.L.not-subcoalg.c2}). This contradiction shows that
$\Delta\left(  c\right)  $ cannot belong to $L\otimes L$. Thus, $L$ is not a
$\mathbf{k}$-subcoalgebra of $B$ (since $c\in L$ but $\Delta\left(  c\right)
\notin L\otimes L$).
\end{example}

\subsection{Appendix: Theorem \ref{thm.1} and noncommutative bialgebras}

The requirement for $B$ to be commutative
in Theorem~\ref{thm.1} can be
replaced by the weaker requirement that $g_1, g_2,
\ldots, g_n$ lie in the center of $B$.
This generalization requires some changes to the proof.
If $A$ is a noncommutative ring and
$f = \sum_{i \in \NN} f_i t^i \in A\left[t\right]$
is a polynomial, then any element $a$ of 
the center of $A$ can be substituted into $f$ to obtain
$f\left(a\right) = \sum_{i \in \NN} f_i a^i$;
this kind of substitution is well-behaved (i.e., we have
$\left(f+g\right)\left(a\right) = f\left(a\right) +
g\left(a\right)$ and $\left(fg\right)\left(a\right)
= f\left(a\right) \cdot g\left(a\right)$ for any
$f, g \in A\left[t\right]$) since $a$ is central.
Lemma~\ref{lem.laurent-sub} still holds for
noncommutative $A$ as long as we require $u$ to lie
in the center of $A$.
Lemma~\ref{lem.BtoA.preserveidk} cannot itself be
generalized to the noncommutative situation, but it
can instead be replaced by the following (more targeted)
lemma:

\begin{lemma}
\label{lem.BtoA.preserveidk-noncomm}
Let $B$ be a $\kk$-bialgebra.
Let $g$ be a grouplike element of $B$ that lies in
the center of $B$.
Let $k\in \NN$. Then, $\id^{\oast k} \left(bg\right)
= \id^{\oast k} \left(b\right) g^k$
for each $b \in B$.
\end{lemma}

The interested reader can fill in the details.

However, we cannot push our luck much further:
If we remove the commutativity requirement from
Theorem~\ref{thm.1} altogether (without offering any
substitute), then Theorem~\ref{thm.1} becomes false,
even if the regularity assumptions are interpreted
as two-sided regularity\footnote{An element $a$ of a
ring $A$ is said to be
\begin{itemize}
\item \emph{left regular} if every
$b \in A$ satisfying $ab = 0$ must satisfy $b = 0$.
\item \emph{right regular} if every
$b \in A$ satisfying $ba = 0$ must satisfy $b = 0$.
\item \emph{two-sided regular} if it is both
left regular and right regular.
\end{itemize}
}.
The following rather intricate counterexample was
discovered and written up by GPT-5.5:

\begin{example}
\label{exa.thm1.noncomm}
Let $\kk$ be any field. Let $B$ be the associative $\kk$-algebra
with generators $e,f,g,x$ subject to the relations
\begin{align}
 e^2&=e, & f^2&=f, & ef&=f, & fe&=e, \qquad
\label{eq.exa.thm1.noncomm.rel-idem}
\\
 ex&=x, & xe&=x, & fx&=x,
\label{eq.exa.thm1.noncomm.rel-x}\\
 eg&=f, & xg&=xf
\label{eq.exa.thm1.noncomm.rel-g}
\end{align}
(note: we do not want $xf = x$).
This is a monoid algebra of a certain non-abelian monoid.
We define a coalgebra structure on $B$ by setting
\begin{align*}
\Delta\left(  e\right)  &= e\otimes e,
& \Delta\left(  f\right)  &= f\otimes f,
& \Delta\left(  g\right)  &= g\otimes g,\\
\Delta\left(  x\right)  &= x\otimes e+e\otimes x
\end{align*}
and
\[
\epsilon\left(  e\right)  =\epsilon\left(  f\right)
=\epsilon\left(  g\right)  =1,
\qquad
\epsilon\left(  x\right)  =0.
\]
These formulas make $B$ into a $\kk$-bialgebra. Indeed, the formulas clearly
respect the relations \eqref{eq.exa.thm1.noncomm.rel-idem}. They also respect
\eqref{eq.exa.thm1.noncomm.rel-x}; for instance,
\begin{align*}
\Delta\left(  f\right)  \Delta\left(  x\right)
&=\left(  f\otimes f\right)  \left(  x\otimes e+e\otimes x\right)  \\
&=fx\otimes fe+fe\otimes fx
=x\otimes e+e\otimes x
=\Delta\left(  x\right)  .
\end{align*}
Finally, the relation $eg=f$ is respected because
\[
\Delta\left(  e\right)  \Delta\left(  g\right)
=eg\otimes eg=f\otimes f=\Delta\left(  f\right)  ,
\]
and the relation $xg=xf$ is respected because
\begin{align*}
\Delta\left(  x\right)  \Delta\left(  g\right)
&=xg\otimes eg+eg\otimes xg
=xf\otimes f+f\otimes xf,\\
\Delta\left(  x\right)  \Delta\left(  f\right)
&=xf\otimes ef+ef\otimes xf
=xf\otimes f+f\otimes xf.
\end{align*}
Thus, $\Delta$ is well-defined as a $\kk$-algebra homomorphism
$B\to B\otimes B$. Coassociativity and counitality are immediate on the
generators. Thus, $B$ is a $\kk$-bialgebra.

The elements $g$ and $1$ are grouplike. We shall show that they satisfy the
regularity assumptions that would appear in a noncommutative analogue of
Theorem~\ref{thm.1}, but nevertheless are not left $L$-linearly independent.

First we record a normal form for the algebra $B$. Consider the following
rewriting rules for the underlying monoid of the monoid algebra $B$:
\begin{align*}
 ee&\longrightarrow e, & ff&\longrightarrow f,
& ef&\longrightarrow f, & fe&\longrightarrow e,\\
 ex&\longrightarrow x, & xe&\longrightarrow x,
& fx&\longrightarrow x,\\
 eg&\longrightarrow f, & xg&\longrightarrow xf.
\end{align*}
This rewriting system is terminating and confluent.
(Termination is clear by assigning the generators
$e,f,x$ degree $1$ and the generator $g$ degree $2$; then,
each rewriting rule reduces the degree.
Thus, by the diamond lemma \cite[Theorem 1.7.10]{Sapir},
its confluence follows from local confluence, which is easily
verified.)
The only ambiguities are length-$3$ overlap ambiguities,
and they are directly checked. Hence, by the Diamond
Lemma, a $\kk$-basis of $B$ is formed by the (residue classes
of) all words in the alphabet $\left\{  e,f,g,x\right\}$
that contain none of the adjacent pairs
\[
 ee,\quad ff,\quad ef,\quad fe,\quad ex,\quad xe,\quad fx,\quad eg,\quad xg.
\]
In other words, it is formed by all words in which
\begin{itemize}
\item each $f$ (unless it is the last letter) is followed by $g$;
\item each $x$ (unless it is the last letter) is followed by $f$ or $x$;
\item the letter $e$ can only appear as the last letter.
\end{itemize}
In particular, the empty word (corresponding to the unity $1$)
is normal, as are all one-letter words.
In particular, the words $g$, $1$, $x$ and $xf$ are nonzero, and $B$ is not
commutative, since $eg=f$ whereas $ge$ is a normal word distinct from $f$.

Right multiplication by $g$ sends normal words $w$ to normal words as follows:
\[
w \mapsto wg = \begin{cases}
uf, & \text{if }w=ue;\\
uxf, & \text{if }w=ux;\\
wg, & \text{otherwise.}
\end{cases}
\]
This map from the set of normal words to itself is easily seen to be injective.
Thus, right
multiplication by $g$ is injective. Moreover, this map has no finite cycles:
indeed, the total number of $f$'s and $g$'s in a normal word
increases under every application.
Hence, right multiplication by $g-1$ is injective as well. Indeed, if a finite
linear combination $a$ of normal words satisfied $a\left(  g-1\right)  =0$,
then the finite set of normal words appearing in $a$ would be stable under the
above injective map $w\mapsto wg$, which would force a finite cycle.

Similarly, left multiplication by $g$ sends each normal word $w$ to the normal
word $gw$. This map is injective and strictly increases the length of $w$.
Therefore, left multiplication by both $g$ and $g-1$ is injective. Thus, $g$
and $g-1$ are two-sided regular in $B$.

Now let $L$ denote the set of all id-unipotent elements of $B$. We have
$x\in L$. Indeed, from $\Delta\left(e\right) = e \otimes e$
and $\epsilon\left(e\right) = 1$, we easily see (by induction on $k$)
that
\[
\id^{\oast k}\left(  e\right)  = e
\qquad\text{for all }k>0.
\]
Next, we note that $e$ is a two-sided identity for $x$, and
\[
\Delta\left(  x\right)  =x\otimes e+e\otimes x,
\]
so an induction on $k$ gives
\[
\id^{\oast k}\left(  x\right)  =kx
\qquad\text{for all }k\in\NN.
\]
Therefore, the sequence $\left(  \operatorname{id}^{\oast0}\left(
x\right)  ,\operatorname{id}^{\oast1}\left(  x\right)
,\operatorname{id}^{\oast2}\left(  x\right)  ,\ldots\right)  $ is
$1$-polynomial. Hence, by Lemma \ref{lem.L.polynomiality}, the number $1$ is a
degree-upper bound of $x$.
Therefore, $x$ is id-unipotent. In other words,
$x\in L$.

Likewise, $xf\in L$: We have
\[
\Delta\left(  xf\right)  =xf\otimes f+f\otimes xf,
\]
and $f$ is a two-sided identity for $xf$, whence
\[
\id^{\oast k}\left(  xf\right)  =kxf
\qquad\text{for all }k\in\NN.
\]
Thus, $1$ is a degree-upper bound of $xf$ as well;
hence $xf \in L$.

However, the relation $xg=xf$ gives
\[
 xg-\left(  xf\right)  1=0.
\]
This is a nontrivial left $L$-linear relation between the two grouplike
elements $g$ and $1$, since $x\in L$ and $xf\in L$ and since the normal-word
basis shows that $x$ and $xf$ are nonzero. Hence, these two grouplike elements
are not left $L$-linearly independent, despite the fact that $g$, $1$ and
$g-1$ are two-sided regular.
\end{example}

\section{\label{sect.gen3}Linear independence in the dual algebra}

So far we have considered grouplike elements in
coalgebras and bialgebras.
A related (and, occasionally, equivalent) concept are
the \emph{characters} of an algebra.

\subsection{\label{subsect.chars-grouplikes}Some words on characters}

We recall that a \emph{character} of a $\kk$-algebra
$A$ means a $\kk$-algebra homomorphism from
$A$ to $\kk$.
Before we study linear independence questions for
characters, let us briefly survey their relation to
grouplike elements.

The simplest way to connect characters with grouplike
elements is the following (easily verified)
fact:

\begin{proposition}
\label{prop.char-vs-gl-1}
Let $C$ be a $\kk$-coalgebra that is free as a $\kk$-module.
Let $c \in C$.
Consider the $\kk$-linear map $c^{\vee\vee} : C^\vee \to \kk$
that sends each $f \in C^\vee$ to $f\left(c\right) \in \kk$.
Then, $c^{\vee\vee}$ is a character of the $\kk$-algebra $C^\vee$
if and only if $c$ is grouplike in $C$.
\end{proposition}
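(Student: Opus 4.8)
The plan is to unfold both sides of the claimed equivalence into explicit conditions on $\Delta_C(c)$ and $\epsilon_C(c)$, and then to reduce the whole statement to a separation property of $C \otimes C$ that is exactly where the freeness hypothesis is used.

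First I would recall that $C^\vee = \Hom(C,\kk)$ carries the convolution product $\oast$, with unity the counit $\epsilon_C$. By definition, $c^{\vee\vee}$ is a character of the $\kk$-algebra $C^\vee$ if and only if $c^{\vee\vee}(\epsilon_C) = 1$ and $c^{\vee\vee}(f \oast g) = c^{\vee\vee}(f)\, c^{\vee\vee}(g)$ for all $f,g \in C^\vee$. The first condition unwinds to $\epsilon_C(c) = 1$. For the second, write $\Delta_C(c) = \sum_{(c)} c_{(1)} \otimes c_{(2)}$ in Sweedler notation; then $c^{\vee\vee}(f \oast g) = (f \oast g)(c) = \sum_{(c)} f(c_{(1)})\, g(c_{(2)})$, while $c^{\vee\vee}(f)\,c^{\vee\vee}(g) = f(c)\,g(c)$. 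So the second condition says
\[
\sum_{(c)} f(c_{(1)})\, g(c_{(2)}) = f(c)\, g(c) \qquad \text{for all } f,g \in C^\vee ,
\]
which, writing $(f\otimes g)$ for the linear form $\mu_\kk \circ (f \otimes g) : C \otimes C \to \kk$ appearing in the definition of $\oast$, is the same as $(f \otimes g)\bigl(\Delta_C(c) - c \otimes c\bigr) = 0$ for all $f,g \in C^\vee$.

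On the other side, $c$ is grouplike precisely when $\epsilon_C(c) = 1$ and $\Delta_C(c) - c \otimes c = 0$. Since the ``$\epsilon_C(c) = 1$'' clauses agree, Proposition~\ref{prop.char-vs-gl-1} reduces to the following claim about the element $z := \Delta_C(c) - c \otimes c$ of $C \otimes C$: we have $z = 0$ if and only if $(f \otimes g)(z) = 0$ for all $f, g \in C^\vee$. The ``only if'' is trivial. For the ``if'' --- the sole point at which freeness is invoked --- I would fix a $\kk$-basis $(e_i)_{i \in I}$ of $C$, so that $(e_i \otimes e_j)_{(i,j) \in I \times I}$ is a $\kk$-basis of $C \otimes C$, and expand $z = \sum_{(i,j) \in S} z_{ij}\, e_i \otimes e_j$ as a finite $\kk$-linear combination (with $S \subseteq I \times I$ finite and $z_{ij} \in \kk$). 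For each index $k \in I$, let $\pi_k \in C^\vee$ be the coordinate functional determined by $\pi_k(e_i) = \delta_{k,i}$. Then $(\pi_k \otimes \pi_l)(z) = z_{kl}$ for all $k,l$, so the hypothesis forces every coefficient $z_{kl}$ to vanish, i.e.\ $z = 0$.

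I do not expect a serious obstacle: the only care needed is bookkeeping around the notation ``$f \otimes g$'', which must be read as the composite $\mu_\kk \circ (f \otimes g)$, together with the routine verifications that it sends $c \otimes c$ to $f(c)g(c)$ and $\Delta_C(c)$ to $\sum_{(c)} f(c_{(1)})g(c_{(2)})$. Note that no finiteness of the basis is used --- only its existence --- so the argument applies to free $\kk$-modules of arbitrary rank, and the ``if'' half of the proposition (grouplike $\Rightarrow$ character) in fact needs no freeness at all.
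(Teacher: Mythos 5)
Your proposal is correct and follows essentially the same route as the paper's proof: reduce both conditions to the two equivalences on $\epsilon_C(c)$ and on $\Delta_C(c) - c \otimes c$, and then use the fact that pure tensors in $C^{\vee} \otimes C^{\vee}$ separate $C \otimes C$ when $C$ is free, proved via a basis and its dual coordinate functionals. Your closing observation that the grouplike $\Rightarrow$ character direction needs no freeness is a nice (correct) extra remark, but the overall argument is the same as the paper's.
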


\begin{proof}
[Proof of Proposition \ref{prop.char-vs-gl-1} (sketched).]We shall prove the
two equivalences
\begin{align}
& \ \left(  \Delta\left(  c\right)  =c\otimes c\right)  \nonumber\\
& \Longleftrightarrow\ \left(  c^{\vee\vee}\left(  f\oast g\right)
=c^{\vee\vee}\left(  f\right)  \cdot c^{\vee\vee}\left(  g\right)  \text{ for
all }f,g\in C^{\vee}\right)
\label{pf.prop.char-vs-gl-1.equiv1}
\end{align}
and
\begin{equation}
\left(  \epsilon\left(  c\right)  =1\right)  \ \Longleftrightarrow\ \left(
c^{\vee\vee}\left(  \epsilon\right)  =1\right)
.
\label{pf.prop.char-vs-gl-1.equiv2}
\end{equation}

Indeed, the equivalence \eqref{pf.prop.char-vs-gl-1.equiv2} follows from the
fact that $c^{\vee\vee}\left(  \epsilon\right)  =\epsilon\left(  c\right)  $
(which is a direct consequence of the definition of $c^{\vee\vee}$). Let us
now prove the equivalence \eqref{pf.prop.char-vs-gl-1.equiv1}.

The $\kk$-module $C$ is free. Hence, it is easy to prove the following fact:

\begin{statement}
\textit{Fact 1:} Let $x$ and $y$ be two elements of $C\otimes C$. Then, $x=y$
if and only if we have%
\[
\left(  f\otimes g\right)  \left(  x\right)  =\left(  f\otimes g\right)
\left(  y\right)  \qquad\qquad\text{for all }f,g\in C^{\vee}.
\]

\end{statement}

\noindent Fact 1 is often stated as ``the pure tensors in
$C^{\vee}\otimes C^{\vee}$ separate $C\otimes C$''. It can be
proved by picking a basis $\left(  e_{i}\right)  _{i\in I}$ of $C$ and its
corresponding dual ``basis'' $\left(
e_{i}^{\ast}\right)  _{i\in I}$ of $C^{\vee}$ (with $e_{i}^{\ast}\left(
e_{j}\right)  $ being the Kronecker delta $\delta_{i,j}$ for all $i,j\in I$),
and arguing that $\left(  e_{i}^{\ast}\otimes e_{j}^{\ast}\right)  _{\left(
i,j\right)  \in I\times I}$ is the dual ``basis'' to the basis
$\left(  e_{i}\otimes e_{j}\right)
_{\left(  i,j\right)  \in I\times I}$ of $C\otimes C$.

Applying Fact 1 to $x=\Delta\left(  c\right)  $ and $y=c\otimes c$, we obtain
the following equivalence:%
\begin{align}
& \ \left(  \Delta\left(  c\right)  =c\otimes c\right)  \nonumber\\
& \Longleftrightarrow\ \left(  \left(  f\otimes g\right)  \left(
\Delta\left(  c\right)  \right)  =\left(  f\otimes g\right)  \left(  c\otimes
c\right)  \text{ for all }f,g\in C^{\vee}\right)
.\label{pf.prop.char-vs-gl-1.equiv3}%
\end{align}
But the multiplication map $\mu_{\kk} : \kk \otimes \kk \to \kk$ of
$\kk$ is bijective; thus, for any $f,g\in
C^{\vee}$, we have the following chain of equivalences:%
\begin{align*}
& \ \left(  \left(  f\otimes g\right)  \left(  \Delta\left(  c\right)
\right)  =\left(  f\otimes g\right)  \left(  c\otimes c\right)  \right)  \\
& \Longleftrightarrow\ \left(  \mu_{\kk}\left(  \left(  f\otimes
g\right)  \left(  \Delta\left(  c\right)  \right)  \right)  =\mu_{\kk}
\left(  \left(  f\otimes g\right)  \left(  c\otimes c\right)  \right)
\right)  \\
& \Longleftrightarrow\ \left(  \left(  f\oast g\right)  \left(  c\right)
=f\left(  c\right)  \cdot g\left(  c\right)  \right)  \\
& \qquad\qquad\left(
\begin{array}
[c]{c}%
\text{since }\mu_{\kk}\left(  \left(  f\otimes g\right)  \left(
\Delta\left(  c\right)  \right)  \right)  =\underbrace{\left(  \mu
_{\kk}\circ\left(  f\otimes g\right)  \circ\Delta\right)
}_{\substack{=f\oast g\\\text{(by the definition of }\oast\text{)}}}\left(
c\right)  =\left(  f\oast g\right)  \left(  c\right)  \\
\text{and }\mu_{\kk}\left(  \left(  f\otimes g\right)  \left(  c\otimes
c\right)  \right)  =\mu_{\kk}\left(  f\left(  c\right)  \otimes
g\left(  c\right)  \right)  =f\left(  c\right)  \cdot g\left(  c\right)
\end{array}
\right)  \\
& \Longleftrightarrow\ \left(  c^{\vee\vee}\left(  f\oast g\right)
=c^{\vee\vee}\left(  f\right)  \cdot c^{\vee\vee}\left(  g\right)  \right)
\\
& \qquad\qquad\left(
\begin{array}
[c]{c}%
\text{since the definition of }c^{\vee\vee}\text{ yields }c^{\vee\vee}\left(
f\oast g\right)  =\left(  f\oast g\right)  \left(  c\right)  \\
\text{and }c^{\vee\vee}\left(  f\right)  =f\left(  c\right)  \text{ and
}c^{\vee\vee}\left(  g\right)  =g\left(  c\right)
\end{array}
\right)  .
\end{align*}
Hence, the equivalence \eqref{pf.prop.char-vs-gl-1.equiv3} is saying the same
thing as the equivalence \eqref{pf.prop.char-vs-gl-1.equiv1}. Thus, the latter
equivalence is proven.

We have now proved both equivalences \eqref{pf.prop.char-vs-gl-1.equiv1} and
\eqref{pf.prop.char-vs-gl-1.equiv2}. But the definition of grouplike elements
yields the following chain of equivalences:%
\begin{align*}
& \ \left(  c\text{ is grouplike in }C\right)  \\
& \Longleftrightarrow\ \left(  \Delta\left(  c\right)  =c\otimes c\right)
\text{ and }\left(  \epsilon\left(  c\right)  =1\right)  \\
& \Longleftrightarrow\ \left(  c^{\vee\vee}\left(  f\oast g\right)
=c^{\vee\vee}\left(  f\right)  \cdot c^{\vee\vee}\left(  g\right)  \text{ for
all }f,g\in C^{\vee}\right)  \text{ and }
\left(  c^{\vee\vee}\left( \epsilon\right) =1\right)  \\
& \qquad\qquad\left(  \text{by \eqref{pf.prop.char-vs-gl-1.equiv1} and
\eqref{pf.prop.char-vs-gl-1.equiv2}}\right)  \\
& \Longleftrightarrow\ \left(  c^{\vee\vee}\text{ is a }
\kk\text{-algebra homomorphism from }C^{\vee}\text{ to }\kk\right)  \\
& \qquad\qquad\left(  \text{since the map }c^{\vee\vee}\text{ is }%
\kk\text{-linear, and }\epsilon\text{ is the unity of the }%
\kk\text{-algebra }C^{\vee}\right)  \\
& \Longleftrightarrow\ \left(  c^{\vee\vee}\text{ is a character of the
}\kk\text{-algebra }C^{\vee}\right)  .
\end{align*}
This proves Proposition \ref{prop.char-vs-gl-1}.
\end{proof}


Proposition~\ref{prop.char-vs-gl-1} can be used to characterize the
characters of some algebras as grouplikes.
To wit:
If $A$ is a $\kk$-algebra
that is finite free as a $\kk$-module, then the dual
$\kk$-module $A^\vee$ canonically receives the structure of
a $\kk$-coalgebra\footnote{This structure can be defined as
follows:
Its comultiplication $\Delta_{A^\vee}
: A^\vee \to A^\vee \otimes A^\vee$ is
the composition of the map
$\mu_A^\vee : A^\vee \to \left(A \otimes A\right)^\vee$
(which is
the dual of the multiplication $\mu_A : A \otimes A \to A$
of $A$)
with the canonical isomorphism
$\left(A \otimes A\right)^\vee \to A^\vee \otimes A^\vee$
(which exists because $A$ is finite free).
The counit $\epsilon_{A^\vee} : A^\vee \to \kk$ of $A^\vee$
is the dual of the unit map $\eta_A : \kk \to A$ of $A$.},
whose dual
$\left(A^\vee\right)^\vee$ is canonically isomorphic to the
$\kk$-algebra $A$ (via the standard $\kk$-module isomorphism
$A \to \left(A^\vee\right)^\vee$).
Thus, Proposition~\ref{prop.char-vs-gl-1} yields that
the characters of a $\kk$-algebra $A$ that is finite free
as a $\kk$-module are precisely the grouplike elements of
its dual coalgebra $A^\vee$.

There are some ways to extend this result to $\kk$-algebras
$A$ that are not finite free; the best-known case is when
$\kk$ is a field. In this case, every $\kk$-algebra $A$
has a \emph{Hopf dual} $A^o$ (also known as the
\emph{zero dual} or \emph{Sweedler dual}), which is a
$\kk$-coalgebra whose grouplike elements are precisely the
characters of $A$.
(See \cite[Section 6.0]{Sweedl69} for the definition and
fundamental properties of this Hopf dual and \cite{D21,DucTol09} 
for questions linked to rationality.)
When $\kk$ is not a field, this Hopf dual is not defined
any more. 
Indeed, the canonical map
$M^\vee \otimes N^\vee \to \left(M \otimes N\right)^\vee$
that is defined for any two $\kk$-modules $M$ and $N$
is known to be injective when $\kk$ is a field, but may
fail to be injective even when $\kk$ is an integral
domain\footnote{This will happen when
$M^\vee \otimes N^\vee$ has torsion
(see Subsection~\ref{subsect.dual-tensor} and Proposition \ref{DualInj} in particular).}.
Other concepts can be used to salvage the relation between
grouplikes and characters:
bases in duality, dual laws constructed directly \cite{Bui,Bui2,Duchamp-Luque},
pseudo-coproducts \cite[Section 2]{PatReu02}.
We shall not dwell on these things;
but the upshot for us is that while the concepts of
characters of an algebra and grouplike elements of a coalgebra
are closely connected, neither concept subsumes the other.


For this reason, our third main result\footnote{which
has been briefly announced at
\url{https://mathoverflow.net/questions/310354} } will be stated directly in the language of characters 
on a bialgebra (i.e., algebra morphisms from this
bialgebra to the base ring).

\subsection{Examples of characters}\label{CKS}

Before we state our result, let us see some examples
of characters of $\kk$-bialgebras.
The following notation will come rather useful:

\begin{definition}
\label{def.kleene-star}
Let $a$ be an element of a ring. Then, the
\emph{Kleene star} $a^*$ of $a$ is defined to be the
sum $\sum_{i=0}^\infty a^i = 1 + a + a^2 + a^3 + \cdots$,
assuming that this sum is well-defined (e.g., because $a$
is nilpotent or the sum converges for some other reason).
Note that $a^* = \dfrac{1}{1-a}$ whenever $a^*$ is
defined.
\end{definition}

The dual $\calB^\vee$ of a $\kk$-bialgebra $\calB$ is a
$\kk$-algebra, thus a ring; oftentimes, this ring has a
topology on it. (For example, if $\calB$ is graded, with
$\calB = \bigoplus_{n \geq 0} \calB_n$, then
its dual 
$\calB^\vee = \prod_{n \geq 0} \tup{\calB_n}^\vee$
has a product topology\footnote{In this case, $\kk$ is
usually considered as endowed with the discrete
topology. See \cite[Ch. II, \S 5.1]{Bourbaki-Lie1} for
an application to the combinatorics of the free Lie
algebra and the free group.}.) Oftentimes, this causes
the Kleene stars of some elements of $\calB^\vee$ to be
well-defined. As we shall soon see, characters of
$\calB$ are oftentimes found among such Kleene stars.

\subsubsection{Characters of polynomial rings}

We begin with polynomial rings: noncommutative and
commutative.

\begin{example}
\label{exa.char-as-kleene.ncp}
Consider the $\kk$-algebra $\ncp{\kk}{x,y}$ of
polynomials in two noncommuting variables $x, y$ over
$\kk$.
As a $\kk$-module, it has a basis consisting of words
in the alphabet $\set{x, y}$.
A character of this $\kk$-algebra is
uniquely determined by the images $\alpha$ and $\beta$
of $x$ and $y$ (indeed, this follows from the universal
property of $\ncp{\kk}{x,y}$). We can identify the dual
$\tup{\ncp{\kk}{x,y}}^\vee$ with the ring
$\ncs{\kk}{x,y}$ of noncommutative power series in $x, y$
(by means of the bilinear form that sends two equal
words to $1$ and sends two distinct words to $0$).
Thus, the character of $\ncp{\kk}{x,y}$ that sends
$x$ and $y$ to $\alpha$ and $\beta$ is explicitly given
as the Kleene star
\[
(\alpha.x+\beta.y)^*=\sum_{n\ge0}\,(\alpha.x+\beta.y)^n .
\]
This characterizes all characters of $\ncp{\kk}{x,y}$.
A similar characterization can be given for noncommutative
polynomial rings in multiple variables.
\end{example}

\begin{example}
\label{exa.char-as-kleene.cp}
Consider the $\kk$-algebra $\kk[x,y]$ of
polynomials in two commuting variables $x, y$ over
$\kk$.
As a $\kk$-module, it has a basis consisting of
monomials in $x, y$.
A character of this $\kk$-algebra is
uniquely determined by the images $\alpha$ and $\beta$
of $x$ and $y$ (indeed, this follows from the universal
property of $\kk[x,y]$). We can identify the dual
$\tup{\kk[x,y]}^\vee$ with the ring
$\kk[[x,y]]$ of power series in $x, y$
(by means of the bilinear form that sends two equal
monomials to $1$ and sends two distinct monomials to $0$).
Thus, the character of $\kk[x,y]$ that sends
$x$ and $y$ to $\alpha$ and $\beta$ is explicitly given
as the Kleene star
\[
(\alpha.x+\beta.y-\alpha\beta.xy)^*=\sum_{n\ge0}\,(\alpha.x+\beta.y-\alpha\beta.xy)^n .
\]
This characterizes all characters of $\kk[x,y]$.
A similar characterization can be given for
polynomial rings in multiple variables.
\end{example}

It is worth saying that the $\kk$-algebra $\Lambda$
of symmetric functions (\cite[Chapter 2]{GriRei})
is a polynomial ring in infinitely many variables over
$\kk$, and thus its characters can be described as in
Example~\ref{exa.char-as-kleene.cp}.
These characters have multiple names: they are known
as \emph{virtual alphabets}, or as \emph{big Witt
vectors}, or as \emph{specializations}.

\subsubsection{Characters of monoid rings}

Example~\ref{exa.char-as-kleene.ncp} and
Example~\ref{exa.char-as-kleene.cp} have a common
generalization. In fact, both noncommutative and
commutative polynomial rings are particular cases of
monoid rings of trace monoids. Let us thus briefly
discuss characters of monoid rings.

Let $\tup{M,\star,1_M}$ be a monoid, and let
$\calA=\kk[M]$ be its monoid algebra (see
Example~\ref{co-Hadamard}).
As in Example~\ref{co-Hadamard}, we define the
standard pairing $\scal{.}{.} :
\kk^M\otimes \kk[M]\to \kk$ by \eqref{main_pairing_M}.
The multiplication of $\kk[M]$ can thus be written as
\begin{equation}\label{convol_alg_mon}
P \star Q := \sum_{uv=w}\scal{P}{u}\scal{Q}{v}\,w
\end{equation}
(where $u, v, w$ range over $M$).

If the map $\star : M \times M\to M$ has finite fibers\footnote{Recall that a map $f:\ X\to Y$ between two sets $X$ and $Y$ \emph{has finite fibers} if and only if for each $y\in Y$, the preimage $f^{-1}(y)$ is finite. Thus, the map $\star : M \times M\to M$ has finite fibers if and only if for each $w \in M$, there are only finitely many pairs $\left(u,v\right)\in M \times M$ satisfying $uv=w$.} (this is condition (D) in \cite[Ch. III, \S 2.10]{Bourbaki-Alg1}), then we can extend
the formula \eqref{convol_alg_mon} to arbitrary
$P, Q \in \kk^M$ (as opposed to merely
$P, Q \in \kk[M]$).
In this case, the $\kk$-algebra $\tup{\kk^M,\star,1_M}$
is called the \emph{total algebra of $M$}
(see \cite[\S III.2.10]{Bourbaki-Alg1}),
and its product is the Cauchy product between series.
For every $S\in \kk^M$, the family $(\scal{S}{m}\,m)_{m\in M}$ is summable\footnote{We say that a family $\tup{a_s}_{s \in S}$ of elements of $\kk^M$ is \emph{summable} if for any given $n \in M$, all but finitely many $s \in S$ satisfy $\scal{ a_s }{n} = 0$. Such a summable family will always have a well-defined infinite sum $\sum_{s \in S} a_s \in \kk^M$, whence the name ``summable''.
},
and its sum is $S=\sum_{m\in M}\scal{S}{m}\,m$.
We set $M_+:=M\rsetminus \{1_M\}$ and, for all series 
$S\in \kk^M$, we likewise set $S_+=S-\scal{S}{1_M}.1_M=\sum_{m\in M_+}\scal{S}{m}\,m$.
In order for the family $((S_+)^n)_{n\ge0}$ to be summable,
it is sufficient that the iterated multiplication map 
$\mu^{*}:\ (M_+)^*\to M$ defined by 
\begin{equation}\label{iterated_mult}
\mu^{*}[m_1,\ldots ,m_n]=m_1\cdots m_n\mbox{ (product within $M$)}
\end{equation}
have finite fibers (where we have written the word $[m_1, \ldots, m_n] \in (M_+)^*$ as a list to avoid confusion).%
\footnote{Furthermore, this condition is also necessary (if $S_+$ is generic) if $\kk=\Z$. These monoids are called ``locally finite'' in \cite{Eil}.}

\smallskip
Now, we assume that this condition (i.e., that $\mu^*$ has finite fibers) is satisfied.
Then, $\sum_{n\ge0} (S_+)^n=(1-S_+)^{-1}$ is the Kleene star $(S_+)^*$ of $S_+$ (see Definition~\ref{def.kleene-star}).

Define the series $\underline{M}=\sum_{m\in M}\,m \in \kk^M$; this is called the \emph{characteristic series} of the 
monoid $M$.
This series $\underline{M}$ is invertible\footnote{Here we are using the condition that $\mu^*$ has finite fibers, these monoids are called \emph{locally finite} in \cite{Eil}.}, and its inverse is\footnote{Here we are using the fact that $\underline{M}_+ = \underline{M} - 1$, so that $1 + \underline{M}_+ =\underline{M}$.}
\begin{align}
\underline{M}^{-1}
&=(-\underline{M}_+)^*=\sum_{n\ge 0}(-\underline{M}_+)^n=1-\underline{M}_++(\underline{M}_+)^2 \pm \cdots  \nonumber \\
&=\sum_{m\in M}\mu(m)\,m ,
\label{Mobdef}
\end{align}
where $\mu(m)$ is defined to be the $m$-th coordinate of $\underline{M}^{-1}$.
This defines a function $\mu : M \to \kk$, which is called the \emph{M\"obius function} of $M$.
Note that it is easy to see that $\mu(1_M)=1_\kk$,
and thus the equality \eqref{Mobdef} is equivalent to 
\begin{equation}\label{charMon}
\underline{M}=\big(-\sum_{m\in M_+}\mu(m)\,m\big)^* .
\end{equation}
Every character $\chi$ of the $\kk$-algebra $\tup{\kk[M],\star,1_M}$ can now be written in the form\footnote{We regard $\chi$ as an element of $\kk^M$, since the standard pairing $\scal{.}{.}$ allows us to identify the dual of $\kk[M]$ with $\kk^M$.}
\begin{equation}\label{MobChar}
\chi=\sum_{m\in M}\chi(m)\,m=\big(-\sum_{m\in M_+} \chi(m)\mu(m)\,m\big)^* .
\end{equation}
(This follows immediately from \eqref{charMon} by applying the continuous $\kk$-algebra homomorphism $\kk^M \to \kk^M$ that sends each $m \in M$ to $\chi(m) m$. The word ``continuous'' here refers to the product topology on $\kk^M$.)

\begin{example}
\label{Cartier-Foata} The celebrated arithmetic M\"obius function $\mu:\ \N_+\to \Z$ is the M\"obius function of the multiplicative monoid 
$\tup{\N_+,\cdot,1}$.
It is a particular case of the M\"obius function of a \emph{trace monoid}, i.e., of a monoid defined by commutation relations.
In a nutshell, a \emph{trace monoid} is determined by a finite reflexive undirected graph $\vt$ with vertex set $X$ (called the \emph{alphabet}).
The monoid is generated by all vertices $x \in X$, subject to the relations $xy = yx$ for all edges $\set{x, y}$ of $\vt$.
This monoid is called $M(X,\vt)$ (see \cite{DuKr93}).
It always satisfies the condition that $\mu^*$ has finite fibers.
Its M\"obius function is given by
 \begin{equation}\label{PartMonMob}
 \sum_{m\in M}\mu(m)\,m = \sum_{\substack{C\subseteq X;\\ C\text{ is a clique of } \vt }} (-1)^{|C|} \prod_{x\in C}x
 \end{equation}
(see \cite[Th\'eor\`eme 2.4]{CaFo}).

In particular, if $X=\{x_1,x_2,\ldots ,x_n\}$ and $\vt$ is the complete graph $\vt_{\full}$ (so that $xy = yx$ for all $x, y \in X$), the monoid $M(X,\vt_{\full})$ is the free abelian monoid $\{X^{\alpha}\}_{\alpha\in \N^{(X)}}$, and then we have $\mu(X^{\alpha})=0$ if the monomial $X^{\alpha}$ contains a square and $\mu(X^{\alpha})=(-1)^{|\alpha|}$ if $X^{\alpha}$ is square-free\footnote{When $X$ is the set of all prime numbers, one recovers the classical M\"obius function.}. In this case, \eqref{MobChar} becomes
\begin{equation}
\chi=\Big( 1 - (1 - \chi(x_1) x_1)(1 - \chi(x_2) x_2)\cdots (1 - \chi(x_n) x_n) \Big)^* .
\end{equation}
For example, every character of $\kk[x,y]$ is of the form 
\[
\chi=\Big(1 - (1 - \chi(x) x)(1 - \chi(y) y)\Big)^* = \Big(\chi(x) x + \chi(y) y - \chi(x) \chi(y) xy\Big)^*.
\]
This recovers the $(\alpha.x+\beta.y-\alpha\beta.xy)^*$ formula from Example~\ref{exa.char-as-kleene.cp}.

At the opposite extreme, with no commutations  
($\vt=\{(x,x)\}_{x\in X}$), we have $M(X,\vt)=X^*$,
and the monoid $M(X, \vt)$ is the free monoid on $X$;
now, the M\"obius function is supported on
$X\cup\{1_{X^*}\}$ with 
$\mu(1)=1$ and $\mu(x)=-1$ for all $x\in X$.
Thus, every character on $\ncp{\kk}{X}$ is of the form 
\begin{equation}
\chi=\Big(\sum_{x\in X}\chi(x)\,x\Big)^*
\end{equation}
(Kleene star of the plane property, see \cite{KSOP}). 
For $n = 2$, this recovers the $(\alpha.x+\beta.y)^*$ formula from Example~\ref{exa.char-as-kleene.ncp}.
\end{example}
\subsection{Characters on bialgebras}\label{CLA}
We begin with a simple fact:

\begin{proposition}
\label{prop.gen3-mon}
Let $\calB$ 
be a $\kk$-bialgebra.
Then, the set $\Xi(\calB)$ of characters of $\calB$ is a monoid for the convolution product $\oast$.
\end{proposition}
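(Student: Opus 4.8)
The plan is to exhibit $\Xi(\calB)$ as a submonoid of the convolution algebra $\calB^\vee = \Hom(\calB,\kk)$. Recall from the background section that $\calB^\vee$, being the dual of the $\kk$-coalgebra $\calB$, is an associative $\kk$-algebra under the convolution product $\oast$, with unity equal to the counit $\epsilon = \epsilon_\calB$ of $\calB$. Hence associativity of $\oast$ on $\Xi(\calB)$ will come for free, and it will suffice to check two things: that $\epsilon \in \Xi(\calB)$, and that $\Xi(\calB)$ is closed under $\oast$.

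First I would note that $\epsilon$ is a character of $\calB$: indeed, by the very definition of a $\kk$-bialgebra, the counit $\epsilon : \calB \to \kk$ is a $\kk$-algebra homomorphism, i.e., a character. So the unity of $\calB^\vee$ lies in $\Xi(\calB)$.

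Next I would verify closure. Let $p, q \in \Xi(\calB)$; thus $p$ and $q$ are $\kk$-algebra homomorphisms $\calB \to \kk$. Since $\kk$ is a commutative $\kk$-algebra, Lemma~\ref{lem.BtoA.bimap*bimap} (applied with $C = \calB$ and $A = \kk$) shows that $p \oast q : \calB \to \kk$ is again a $\kk$-algebra homomorphism, i.e., $p \oast q \in \Xi(\calB)$. Therefore $\Xi(\calB)$ is a subset of $\calB^\vee$ that contains the unity $\epsilon$ and is closed under the associative operation $\oast$; hence $\tup{\Xi(\calB), \oast, \epsilon}$ is a monoid, proving Proposition~\ref{prop.gen3-mon}.

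There is essentially no obstacle here: the only nontrivial input is the closure of characters under convolution, and that is precisely the content of the already-established Lemma~\ref{lem.BtoA.bimap*bimap} (whose proof uses the commutativity of the target ring $\kk$, which holds by our standing assumption that $\kk$ is a commutative ring). One could even shortcut the argument slightly by observing that $\Xi(\calB)$ is exactly the set of grouplike elements of the dual coalgebra when $\calB$ is finite free (cf.\ Proposition~\ref{prop.char-vs-gl-1}), but the direct verification above is cleaner and avoids any finiteness hypothesis.
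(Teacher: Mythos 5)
Your proof is correct and follows essentially the same route as the paper's: both exhibit $\Xi(\calB)$ as a submonoid of the convolution algebra $\calB^\vee$ by checking that the unity $\epsilon$ is a character and that closure under $\oast$ follows from Lemma~\ref{lem.BtoA.bimap*bimap} applied with $C = \calB$ and $A = \kk$. No gaps.
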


\begin{proof}
We know that $\calB^\vee$ is a $\kk$-algebra under convolution, and thus a monoid.
Hence, we just need to prove that $\Xi(\calB)$ is a submonoid of this monoid
$\calB^\vee$.
But this follows from the following two observations:
\begin{itemize}
\item If $p, q \in \Xi(\calB)$, then $p \oast q \in \Xi(\calB)$.
(This is a consequence of Lemma~\ref{lem.BtoA.bimap*bimap}, applied to
$C = \calB$ and $A = \kk$.)
\item The neutral element
$\underbrace{\eta_\kk}_{=\id} \epsilon_\calB = \epsilon_\calB$
of $\oast$ belongs to $\Xi(\calB)$.
\end{itemize}
Thus, Proposition~\ref{prop.gen3-mon} is proved.
\end{proof}

The convolution monoid $\Xi(\calB)$ is not always a group.

\begin{example}
Fix $q \in \kk$.
Let $\calB$ be the univariate $q$-infiltration bialgebra
$\tup{\kx, \Delta_{\uparrow_q}, \epsilon}$
from Example~\ref{exa.grouplike.q-pol}.
It is easy to see (by induction on $m$) that
\begin{equation}
\Delta_{\uparrow_q}(x^m)
= \sum_{\substack{\tup{i,j,k} \in \NN^3;\\ i+j+k=m}}\dbinom{m}{i,j,k}q^jx^{i+j}\otimes x^{j+k}
\label{eq.infiltr.Deltaxm.1}
\end{equation}
holds in $\calB$ for every $m \in \NN$,
where $\dbinom{m}{i,j,k}$ denotes the multinomial coefficient
$\dfrac{m!}{i!j!k!}$.
(This is actually a particular case of a beautiful formula that holds
for any multivariate $q$-infiltration algebra\footnote{Namely,
\begin{equation}
\Delta_{\uparrow_q}(w)=\sum_{I\cup J=\set{1,2,\ldots,m}}q^{|I\cap J|}w[I]\otimes w[J]
\end{equation}
for any word $w$ of length $m$.
See ``Shuffles, stuffles and other dual laws'' in
\url{https://mathoverflow.net/questions/214927/}
for details.}.)

As a $\kk$-module, the dual $\calB^\vee$ of $\calB$ can be identified with
the $\kk$-module $\kk[[x]]$ of formal power series in $x$ over $\kk$
(by equating $x^j \in \kk[[x]]$ with the $\kk$-linear form on $\calB$
that sends $x^j \in \calB$ to $1$ and sends any other power of $x$ in
$\calB$ to $0$).
We denote the convolution product $\oast$ on the dual algebra $\calB^\vee$
by $\uparrow_q$.
Thus, $\uparrow_q$ is a binary operation on $\calB^\vee = \kk[[x]]$.

The monoid of characters of $\calB$ is $\Xi(\calB)=\{(\alpha x)^*\}_{\alpha\in \kk} $,
where we are using the Kleene star notation $s^* = \dfrac{1}{1-s} = 1 + s + s^2 + \cdots$
(the multiplication being that of $\kk[[x]]$).
For any $\alpha, \beta \in \kk$, we have
\begin{align*}
(\alpha x)^*\uparrow_q (\beta x)^*
&=
\sum_{m\geq 0}\scal{(\alpha x)^*\uparrow_q (\beta x)^*}{x^m}\,x^m \\
& = \sum_{m\geq 0}\scal{(\alpha x)^*\otimes (\beta x)^*}{\Delta_{\uparrow_q}(x^m)}\,x^m
\\
&=
\sum_{m\geq 0} \ \ \sum_{\substack{\tup{i,j,k} \in \NN^3;\\ i+j+k=m}} \dbinom{m}{i,j,k}q^j
\scal{(\alpha x)^*\otimes (\beta x)^*}{x^{i+j}\otimes x^{j+k}}x^m
\\
& \qquad \qquad \tup{\text{by \eqref{eq.infiltr.Deltaxm.1}}} \\
&= \sum_{m\geq 0}x^m \ \ \sum_{\substack{\tup{i,j,k} \in \NN^3;\\ i+j+k=m}} \dbinom{m}{i,j,k} q^j \alpha^{i+j}\beta^{j+k}
=\sum_{m\geq 0}x^m (q\alpha\beta+\alpha+\beta)^m
\\
&= \big((q\alpha\beta+\alpha+\beta)x\big)^* .
\end{align*}
In particular, when $q=0$, we recover the law of composition of characters for the shuffle product:
\[
(\alpha x)^*\shuffle (\beta x)^*=\big((\alpha+\beta) x\big)^* ;
\]
thus, the monoid of characters $\Xi(\calB)$ is (in this case) isomorphic with the additive group $(\kk,+,0)$.\\
When $q \in \kk^\times$, the monoid of characters $\Xi(\calB)$ is isomorphic with the multiplicative monoid $(\kk,\cdot,1)$ through $(\alpha x)^*\mapsto q\alpha+1$;
in particular, it has a zero element $(\alpha x)^*$ with $\alpha = -\dfrac{1}{q}$.
\end{example}

\begin{theorem}
\label{thm.gen3}
Let $\calB$ 
be a $\kk$-bialgebra.
As usual, let $\Delta = \Delta_\calB$ and $\epsilon = \epsilon_\calB$
be its comultiplication and its counit.

Let $\calB_+=\ker(\epsilon)$. For each $N \geq 0$, let $\calB_+^N = \underbrace{\calB_+ \cdot \calB_+ \cdot \cdots \cdot \calB_+}_{N\text{ times}}$, where $\calB_+^{0} = \calB$. Note that $\left(\calB_+^0, \calB_+^1, \calB_+^2, \ldots\right)$ is called the \emph{standard decreasing filtration} of $\calB$.

For each $N \geq -1$, we define a $\kk$-submodule $\calB_N^\vee$ of $\calB^\vee$ by
\begin{equation}
\calB_N^\vee = (\calB_+^{N+1})^\perp
= \left\{ f \in \calB^\vee \mid f\left(\calB_+^{N+1}\right) = 0 \right\} .
\end{equation}  
Thus, $\left(\calB_{-1}^\vee, \calB_0^\vee, \calB_1^\vee, \ldots\right)$
is an increasing filtration of $\calB_\infty^\vee := \bigcup_{N\geq -1}\calB_N^\vee$ with $\calB_{-1}^\vee = 0$.
Then:

\begin{enumerate}

\item[\textbf{(a)}] We have $\calB_p^\vee\oast \calB_q^\vee\subseteq \calB_{p+q}^\vee$ for any $p, q \geq -1$ (where we set $\calB_{-2}^\vee = 0$).
Hence, $\calB_\infty^\vee$ is a subalgebra of the convolution algebra $\calB^\vee$.

\item[\textbf{(b)}] Assume that $\kk$ is an integral domain.
Then, the set $\Xi(\calB)^\times$ of invertible characters (i.e., of invertible elements of the monoid $\Xi(\calB)$ from Proposition~\ref{prop.gen3-mon}) is left $\calB_\infty^\vee$-linearly independent.
\end{enumerate}
\end{theorem}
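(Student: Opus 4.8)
The plan is to prove \textbf{(a)} by a short filtration computation and \textbf{(b)} by a ``shortest counterexample'' argument in the style of the classical linear independence of characters, whose engine is a ``translation'' operation on functionals that strictly lowers the order along the filtration of part \textbf{(a)}. For \textbf{(a)}: since $\calB_+=\ker\epsilon$ is an ideal of $\calB$ and the component of $\Delta(b)$ in $\kk 1_\calB\otimes\kk 1_\calB$ equals $\epsilon(b)(1_\calB\otimes 1_\calB)$, we get $\Delta(\calB_+)\subseteq\calB_+\otimes\calB+\calB\otimes\calB_+$; multiplying $N$ such inclusions and using that each $\calB_+^a$ is an ideal gives $\Delta\bigl(\calB_+^N\bigr)\subseteq\sum_{a+c=N}\calB_+^a\otimes\calB_+^c$ for all $N\ge 0$. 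Hence, for $f\in\calB_p^\vee$, $g\in\calB_q^\vee$ and $w\in\calB_+^{p+q+1}$, writing $\Delta(w)$ as a sum of tensors in $\calB_+^a\otimes\calB_+^c$ with $a+c=p+q+1$, each summand has $a\ge p+1$ (so $f$ kills the left leg) or $c\ge q+1$ (so $g$ kills the right leg); thus $(f\oast g)(w)=0$, proving $\calB_p^\vee\oast\calB_q^\vee\subseteq\calB_{p+q}^\vee$. Since $\epsilon\in\calB_0^\vee$ is the convolution unit and $\calB_\infty^\vee$ is the union of the increasing chain of submodules $\calB_N^\vee$, it follows that $\calB_\infty^\vee$ is a subalgebra of $\calB^\vee$.

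For \textbf{(b)}, assume the claim fails and fix a counterexample with $n$ minimal: distinct invertible characters $\chi_1,\dots,\chi_n$ and $f_1,\dots,f_n\in\calB_\infty^\vee$, not all zero, with $\sum_{i=1}^n f_i\oast\chi_i=0$. Minimality forces every $f_i\ne 0$ (else delete the vanishing ones). If $n=1$ then $f_1=f_1\oast\chi_1\oast\chi_1^{\oast(-1)}=0$, a contradiction; so $n\ge 2$. For $f\in\calB_\infty^\vee$ let $\deg f$ be the least $N$ with $f\in\calB_N^\vee$ (so $\deg 0=-1$ and each $\deg f_i\ge 0$). The key operation: for $b\in\calB$, evaluate the relation at $bc$ for all $c\in\calB$, expand $\Delta(bc)=\Delta(b)\Delta(c)$ in Sweedler notation, and use multiplicativity of each $\chi_i$; this yields a new relation $\sum_{i=1}^n f_i^{(b)}\oast\chi_i=0$, where $f_i^{(b)}$ is the functional $c\mapsto\sum_{(b)}\chi_i(b_{(2)})\,f_i(b_{(1)}c)$. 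Because $\calB_+^{N+1}$ is an ideal, $c\mapsto f(xc)$ lies in $\calB_N^\vee$ whenever $f\in\calB_N^\vee$ and $x\in\calB$, so $f_i^{(b)}\in\calB_\infty^\vee$. If moreover $b\in\calB_+$, write $\Delta(b)=b\otimes 1+1\otimes b+r$ with $r\in\calB_+\otimes\calB_+$; then $f_i^{(b)}=\chi_i(b)\,f_i+(\text{correction terms})$, and since multiplying $\calB_+^{j}$ into $\calB_+$ lands in $\calB_+^{j+1}$, each correction term lies in $\calB_{\deg f_i-1}^\vee$. Thus for $b\in\calB_+$ we have $f_i^{(b)}\equiv\chi_i(b)\,f_i\pmod{\calB_{\deg f_i-1}^\vee}$.

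Now fix $b\in\calB_+$ with $\chi_1(b)\ne\chi_n(b)$ -- possible, since one may pick $c$ with $\chi_1(c)\ne\chi_n(c)$ and set $b=c-\epsilon(c)1_\calB$ -- and iterate the step $(f_1,\dots,f_n)\mapsto(g_1,\dots,g_n)$ with $g_i:=f_i^{(b)}-\chi_n(b)f_i$; each $(g_i)$ again satisfies $\sum_i g_i\oast\chi_i=0$ (subtract $\chi_n(b)$ times the previous relation) with all entries in $\calB_\infty^\vee$. The congruence gives $g_n\in\calB_{\deg f_n-1}^\vee$, so the $n$-th entry strictly drops in degree, while $g_1\equiv(\chi_1(b)-\chi_n(b))f_1\pmod{\calB_{\deg f_1-1}^\vee}$. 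Here the integral-domain hypothesis enters: the natural map $\calB_N^\vee/\calB_{N-1}^\vee\to(\calB_+^N/\calB_+^{N+1})^\vee$ is injective, so $\calB_N^\vee/\calB_{N-1}^\vee$ is a submodule of a dual module and hence torsion-free over $\kk$; therefore multiplication by the nonzero scalar $\chi_1(b)-\chi_n(b)$ does not kill the nonzero class of $f_1$ in $\calB_{\deg f_1}^\vee/\calB_{\deg f_1-1}^\vee$, so $g_1\ne 0$ with $\deg g_1=\deg f_1$. Iterating at most $\deg f_n+1$ times makes the $n$-th entry $0$ while the first entry stays nonzero, producing a relation $\sum_{i=1}^{n-1}(\cdots)\oast\chi_i=0$ among distinct invertible characters with a nonzero coefficient -- contradicting minimality of $n$.

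The identification of $f_i^{(b)}$ and the verification of the displayed congruence are routine Sweedler-notation bookkeeping. The substance is in two coupled points: $b$ must be taken inside the augmentation ideal $\calB_+$, so that translating $f_i$ by $b$ strictly lowers its filtration order (a finite-difference phenomenon dual to id-unipotence); and $\kk$ being an integral domain makes the associated graded modules $\calB_N^\vee/\calB_{N-1}^\vee$ torsion-free, which is exactly what prevents the leading term of $f_1$ from being annihilated when we pass to a shorter relation. I expect this torsion-freeness step to be the one place where the hypothesis on $\kk$ is genuinely needed.
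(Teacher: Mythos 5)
Your proof is correct. The engine is the same as the paper's --- a shift by elements of $\calB_+$ that multiplies characters by scalars and strictly lowers the filtration degree of elements of $\calB_\infty^\vee$, plus the observation that over an integral domain the relevant dual modules are torsion-free --- but the organization differs at two points. In \textbf{(a)} you establish $\Delta\tup{\calB_+^N}\subseteq\sum_{a+c=N}\calB_+^a\otimes\calB_+^c$ directly and read off the claim in one line, whereas the paper inducts on $p+q$ via the derivation-type identity of Lemma~\ref{lem.der_with_heart}; your route is shorter, while the paper's reuses the same two lemmas verbatim in part \textbf{(b)}. In \textbf{(b)} the paper normalizes so that $\epsilon$ occurs among the characters, minimizes first the number of characters and then $\sum_g \deg(p_g)$, and derives a contradiction from a single shift; you keep a single minimality (on $n$), form $g_i=f_i^{(b)}-\chi_n(b)f_i$ in the classical Dedekind--Artin style, and iterate until the $n$-th coefficient vanishes while the torsion-freeness of $\calB_N^\vee/\calB_{N-1}^\vee\hookrightarrow\tup{\calB_+^N/\calB_+^{N+1}}^\vee$ keeps the first coefficient nonzero --- which is exactly where the paper also invokes the integral-domain hypothesis. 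Shifting on the left ($c\mapsto f(bc)$) rather than on the right, as the paper does, is immaterial. Your argument also correctly confines the use of invertibility to the base case $n=1$ and to keeping the shortened list inside $\Xi(\calB)^\times$, consistent with the paper's remark that the independence fails for non-invertible characters.
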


\begin{proof}[Proof of Theorem \ref{thm.gen3}.]
The map $\epsilon : \calB \to \kk$ is a $\kk$-algebra homomorphism;
hence, its kernel $\calB_+$ is an ideal of $\calB$.
Thus, $\calB_+ = \calB \calB_+$, so that
\begin{equation}
\calB_+^N = \calB_+^{N-1} \calB_+
\qquad \text{for each $N \geq 1$}.
\label{pf.thm.gen3.idealpowers}
\end{equation}
Let us define a left action $\triangleright$ of the
$\kk$-algebra $\calB$ on $\calB^\vee$ 
by setting
\[
\scal{u\triangleright f}{v}
=\scal{f}{vu}
\qquad \text{ for all $f\in \calB^\vee$ and $u,v\in \calB$.}
\]
Here, $\scal{g}{b}$ means $g\left(b\right)$ whenever $g \in \calB^\vee$ and $b \in \calB$.
Thus, $\calB^\vee$ is a left $\calB$-module.
For a given $u \in \calB$, we shall refer to the operator
$\calB^\vee \to \calB^\vee,\ f \mapsto u \triangleright f$ as
\emph{shifting by $u$} or the \emph{$u$-left shift operator};
it generalizes Sch\"utzenberger's right $u^{-1}$ in automata theory \cite{Berstel,DFA}.

In the following, we shall use a variant of Sweedler
notation: Given any $u \in \calB$,
instead of writing $\sum_{(u)}u_1\otimes u_2$ for
$\Delta(u)$, we will write
$\sum_{(u)}u^{(1)}\otimes u^{(2)}$ for
$\Delta(u) - u \otimes 1 - 1 \otimes u$.
Thus,
\begin{equation}
\Delta(u)=u\otimes 1+1\otimes u+\sum_{(u)}u^{(1)}\otimes u^{(2)}
\label{pf.thm.gen3.sweedler}
\end{equation}
for each $u \in \calB$.
Moreover, if $u \in \calB_+$, then all of the
$u^{(1)}$ and $u^{(2)}$ can be chosen to belong to
$\calB_+$ themselves (because it is easy to check
that $\Delta\left(u\right) - u \otimes 1 - 1 \otimes u
=
\left( \left(\id - \eta\epsilon\right) \otimes \left(\id - \eta\epsilon\right) \right)
\left( \Delta\left(u\right) \right)
\in
\left( \left(\id - \eta\epsilon\right) \otimes \left(\id - \eta\epsilon\right) \right)
\left( \calB \otimes \calB \right)
= \calB_+ \otimes \calB_+$,
since $\left(\id - \eta\epsilon\right)\left(\calB\right) = \calB_+$).
We shall understand, in the following, that we choose
$u^{(1)}$ and $u^{(2)}$ from $\calB_+$ when $u \in \calB_+$.

The following two lemmas give simple properties of
the left action $\triangleright$:

\begin{lemma} \label{lem.der_with_heart}
Let $f_1,f_2\in \calB^\vee$ and $u\in \calB_+=\ker(\epsilon)$.
Then,
\begin{equation}\label{der_with_heart}
u\triangleright(f_1\oast f_2)
=(u\triangleright f_1)\oast f_2+f_1\oast (u\triangleright f_2)+
\sum_{(u)} (u^{(1)}\triangleright f_1)\oast (u^{(2)}\triangleright f_2) .
\end{equation}
\end{lemma} 
\begin{proof}
Immediate by direct computation. 
\end{proof}

\begin{lemma}
\label{lem.left-act.lower}
Let $k \geq 0$, and let $f \in \calB^\vee_k$ and $u \in \calB_+$.
Then,
$u \triangleright f \in \calB^\vee_{k-1}$.
\end{lemma}
\begin{proof}
Easy using \eqref{pf.thm.gen3.idealpowers}.
\end{proof}

Let us now proceed with the proof of Theorem~\ref{thm.gen3}.

\textbf{(a)}
We shall first show that
$\calB_p^\vee \oast \calB_q^\vee \subseteq \calB_{p+q}^\vee$
for any $p, q \geq -1$.

Indeed, we proceed by strong induction on $p+q$.
Let $f_1\in\calB_p^\vee$ and $f_2\in\calB_q^\vee$.
We intend to show that $f_1\oast f_2\in \calB_{p+q}^\vee$.
This is trivial if $p$ or $q$ is $-1$, since $\calB_{-1}^\vee = 0$.
Thus, we WLOG assume $p,q\geq 0$.

Hence, $\calB_+^{p+q+1} = \calB_+^{p+q} \calB_+$ by \eqref{pf.thm.gen3.idealpowers}.
Thus, the $\kk$-module $\calB_+^{p+q+1}$ is spanned by the products $uv$ for $u\in \calB_+^{p+q}$ and $v\in \calB_+$. Thus, in order to prove that $f_1\oast f_2 \in \calB_{p+q}^\vee$, it suffices to show that $f_1\oast f_2$ is orthogonal to all these products.

So let $u\in \calB_+^{p+q}$ and $v\in \calB_+$. Then, we must prove that $\scal{f_1\oast f_2}{uv} = 0$. But we have
\begin{align}
\scal{f_1\oast f_2}{uv}
&=\scal{v\triangleright(f_1\oast f_2)}{u} .
\label{test1}
\end{align}  
Applying \eqref{der_with_heart} to $v$ instead of $u$, we get
\begin{align}
v \triangleright(f_1\oast f_2)
=(v\triangleright f_1)\oast f_2+f_1\oast (v\triangleright f_2)+
\sum_{(v)} (v^{(1)}\triangleright f_1)\oast (v^{(2)}\triangleright f_2)
\label{test2}
\end{align}
with all $v^{(1)}$ and $v^{(2)}$ lying in $\calB_+$.
Thus, Lemma~\ref{lem.left-act.lower} yields
$v\triangleright f_1 \in \calB_{p-1}^\vee$ and
$v\triangleright f_2 \in \calB_{q-1}^\vee$ and
$v^{(1)}\triangleright f_1 \in \calB_{p-1}^\vee$ and
$v^{(2)}\triangleright f_2 \in \calB_{q-1}^\vee$.
Thus, \eqref{test2} becomes
\begin{align*}
v \triangleright(f_1\oast f_2)
&=
\underbrace{(v\triangleright f_1)}_{\in \calB_{p-1}^\vee}
\oast \underbrace{f_2}_{\in \calB_q^\vee}
+
\underbrace{f_1}_{\in \calB_p^\vee}
\oast \underbrace{(v\triangleright f_2)}_{\in \calB_{q-1}^\vee}
+
\sum_{(v)}
\underbrace{(v^{(1)}\triangleright f_1)}_{\in \calB_{p-1}^\vee}
\oast \underbrace{(v^{(2)}\triangleright f_2)}_{\in \calB_{q-1}^\vee}
\\
&\in \calB_{p-1}^\vee \oast \calB_q^\vee
+ \calB_p^\vee \oast \calB_{q-1}^\vee
+ \calB_{p-1}^\vee \oast \calB_{q-1}^\vee
\subseteq \calB_{p+q-1}^\vee
\end{align*}
(by the induction hypothesis, applied three times).
This entails
$\scal{v\triangleright(f_1\oast f_2)}{u} = 0$
by the definition of $\calB_{p+q-1}^\vee$.
Thus, \eqref{test1} yields $\scal{f_1\oast f_2}{uv}=0$.
Thus, we have proved that $f_1\oast f_2 \in \calB_{p+q}^\vee$.
This completes the proof of $\calB_p^\vee \oast \calB_q^\vee \subseteq \calB_{p+q}^\vee$.

The fact that $\calB_\infty^\vee$ is a subalgebra of the convolution algebra $\calB^\vee$ follows from the preceding, since we also know that $\epsilon \in \calB_0^\vee \subseteq \calB_\infty^\vee$.

\smallskip 
\textbf{(b)} We define the \emph{degree}
of an element $b\in \calB_\infty^\vee$ to be the least index $d\geq -1$ such that $b\in \calB^\vee_{d}$. We denote this index by $\deg(b)$. (Note that $\deg(0)=-1$.)

Recall that for $b\in \calB_d^\vee$  and $u\in \calB_+$, we have $u\triangleright b\in \calB_{d-1}^\vee$ (by Lemma~\ref{lem.left-act.lower}).
Thus, for $b\in \calB_\infty^\vee$  and $u\in \calB_+$, we have $u\triangleright b\in \calB_\infty^\vee$.
In other words, $\calB_+ \triangleright\calB_\infty^\vee \subseteq \calB_\infty^\vee$.

Let us consider non-trivial relations of the form
\begin{equation}\label{non_triv_rel1}
\sum_{g\in F} p_g\oast g=0
\end{equation}
with a finite subset $F$ of $\Xi(\calB)^\times$ and with nonzero coefficients $p_g \in \calB_\infty^\vee$.
If there are no such relations with $F\neq \emptyset$, then we are done.
Otherwise, we pick one such relation of type \eqref{non_triv_rel1} with $|F|\neq 0$ minimal;
among all such minimum-size relations, we pick one in which $\sum_{g \in F} \deg(p_g)$ is minimal.
WLOG, we assume that $\epsilon\in F$ (otherwise, pick $g_0\in F$ and multiply both sides of the equation (\ref{non_triv_rel1}) by $g_0^{\oast-1}$).

It is impossible that $F=\{\epsilon\}$ (as $p_\epsilon\neq 0$); thus, we can choose $g_1\in F\setminus \{\epsilon\}$. Having chosen this $g_1$, we observe that $g_1(\calB_+) \neq 0$ (since $g_1(\calB_+)=0$ would imply $g_1 = \epsilon$ because of $g_1\in \Xi(\calB)$); in other words, there exists some $u\in \calB_+$ such that $\scal{g_1}{u}\neq 0$. Choose such a $u$.

It is easy to see (from the definition) that each character $g \in \Xi(\calB)$ and each $v\in \calB_+$ satisfy
\begin{equation}
v\triangleright g=\scal{g}{v}\,g .
\label{pf.thm.gen3.vong}
\end{equation}

Our plan is now to shift
both sides of (\ref{non_triv_rel1}) by $u$, and rewrite the
resulting equality again as an equality of the form
\eqref{non_triv_rel1} (with new values of $p_g$).
To do so, we introduce a few notations.

Write $\Delta\left(u\right)$ as in \eqref{pf.thm.gen3.sweedler},
with $u^{(1)}, u^{(2)} \in \calB_+$.
For each $g \in F$, let us set
\[
p'_g :=
u\triangleright p_g+ \scal{g}{u}p_g+
\sum_{(u)} \scal{g}{u^{(2)}}(u^{(1)}\triangleright p_g )
\in \calB^\vee .
\]
Then, we have
\begin{align}
u \triangleright (p_g \oast g)
= p'_g \oast g
\label{pf.thm3.gen.pg1}
\end{align}
for each $g \in F$, because
Lemma~\ref{lem.der_with_heart} yields
\begin{align*}
& u \triangleright (p_g \oast g) \\
&= (u\triangleright p_g)\oast g+
p_g\oast (u\triangleright g)+\sum_{(u)} (u^{(1)}\triangleright p_g)\oast (u^{(2)}\triangleright g)  \\
&= (u\triangleright p_g)\oast g+
p_g\oast \left(\scal{g}{u} g\right) +
\sum_{(u)} (u^{(1)}\triangleright p_g)\oast \left( \scal{g}{u^{(2)}} g \right)
\qquad \left(\text{by \eqref{pf.thm.gen3.vong}}\right) \\
&= \underbrace{\Big(u\triangleright p_g+ \scal{g}{u}p_g+
\sum_{(u)} \scal{g}{u^{(2)}}(u^{(1)}\triangleright p_g)\Big)}_{= p'_g} \oast\, g
\\
&= p'_g \oast g .
\end{align*}

Thus, if we shift both sides of (\ref{non_triv_rel1}) by $u$, we find
\[
u \triangleright \left(\sum_{g \in F} p_g\oast g \right) = 0.
\]
Hence,
\begin{align}
0
= u \triangleright \left( \sum_{g \in F} p_g\oast g \right)
= \sum_{g \in F} u\triangleright (p_g\oast g)
= \sum_{g \in F} p'_g \oast g
\label{pf.thm3.gen.pg-0}
\end{align}
(by \eqref{pf.thm3.gen.pg1}).

For each $g \in F$, we have $p'_g \in \calB^\vee_\infty$
(by the definition of $p'_g$, since $p_g \in \calB^\vee_\infty$
and $\calB_+ \triangleright\calB_\infty^\vee \subseteq \calB_\infty^\vee$).

It is easy to see that each $g \in F$ satisfies
\begin{align}
\deg\left(p'_g\right) \leq \deg\left(p_g\right) .
\label{pf.thm3.gen.pg-deg}
\end{align}
(Indeed, Lemma~\ref{lem.left-act.lower} shows that
any nonzero $f \in \calB^\vee_\infty$ satisfies
$\deg\left(u \triangleright f\right) < \deg f$;
for the same reason,
$\deg\left(u^{(1)} \triangleright f\right) < \deg f$.
Thus, the definition of $p'_g$ represents $p'_g$ as
a $\kk$-linear combination of elements of $\calB^\vee_\infty$
having the same degree as $p_g$ or smaller degree.
This proves \eqref{pf.thm3.gen.pg-deg}.)

But the definition of $p'_\epsilon$ easily yields
\begin{align}
p'_\epsilon = u \triangleright p_\epsilon
\label{pf.thm3.gen.pgeps}
\end{align}
(since $u, u^{(2)} \in \calB_+$ entail
$\scal{\epsilon}{u}=0$ and $\scal{\epsilon}{u^{(2)}}=0$).
Therefore,
$\deg\left(p'_\epsilon\right)
= \deg\left(u \triangleright p_\epsilon\right)
< \deg\left(p_\epsilon\right)$
(since Lemma~\ref{lem.left-act.lower} shows that
any nonzero $f \in \calB^\vee_\infty$ satisfies
$\deg\left(u \triangleright f\right) < \deg f$).

Now, recall the equality \eqref{pf.thm3.gen.pg-0}.
This equality (once rewritten as
$\sum_{g \in F} p'_g \oast g = 0$) has the same
structure as \eqref{non_triv_rel1} (since
$p'_g \in \calB^\vee_\infty$ for each $g \in F$),
and uses the same set $F$, but it satisfies
$\sum_{g \in F} \deg\left(p'_g\right)
< \sum_{g \in F} \deg\left(p_g\right)$
(because each $g \in F$ satisfies
\eqref{pf.thm3.gen.pg-deg}, but the specific
character $g = \epsilon$ satisfies
$\deg\left(p'_\epsilon\right) < \deg\left(p_\epsilon\right)$).
Thus, if the coefficients $p'_g$ in
\eqref{pf.thm3.gen.pg-0} are not all $0$, then we
obtain a contradiction to our choice of relation (which was
to minimize $\sum_{g \in F} \deg\left(p_g\right)$).
Hence, all coefficients $p'_g$ must be $0$.
In particular, we must have $p'_{g_1} = 0$.
Therefore,
\[
0 = p'_{g_1}
= u\triangleright p_{g_1}+ \scal{g_1}{u}p_{g_1} +
\sum_{(u)} \scal{g_1}{u^{(2)}}(u^{(1)}\triangleright p_{g_1} )
\]
(by the definition of $p'_{g_1}$), so that
\[
\scal{g_1}{u}p_{g_1}
= - u\triangleright p_{g_1}
- \sum_{(u)} \scal{g_1}{u^{(2)}}(u^{(1)}\triangleright p_{g_1} )
\in \calB^\vee_{\deg\left(p_{g_1}\right) - 1}
\]
(by Lemma~\ref{lem.left-act.lower}, since
$u, u^{(2)} \in \calB_+$ and
$p_{g_1} \in \calB^\vee_{\deg\left(p_{g_1}\right)}$).
In other words, $\scal{g_1}{u}p_{g_1}$ is orthogonal
to $\calB_+^{\deg\left(p_{g_1}\right)}$.
Since\footnote{We recall that
we have chosen
$u\in \calB_+$ such that $\scal{g_1}{u} \neq 0$.}
$\scal{g_1}{u} \neq 0$, this entails that
$p_{g_1}$ is orthogonal
to $\calB_+^{\deg\left(p_{g_1}\right)}$ as well
(since its target $\kk$ is an integral domain).
This means that $p_{g_1} \in \calB^\vee_{\deg\left(p_{g_1}\right) - 1}$,
or, equivalently, $\deg\left(p_{g_1}\right)
\leq \deg\left(p_{g_1}\right) - 1$.
But this is absurd.
This is the contradiction we were looking for.
Thus, Theorem~\ref{thm.gen3} \textbf{(b)} is proved.
\end{proof}

\begin{remark}
\begin{enumerate}
\item[i)] The invertible characters in $\Xi(\calB)^{\times}$ are also right $\calB_\infty^\vee$-linearly independent. This can be proven similarly, using right shifts in the proof.

\item[ii)] The reader should beware of supposing that the standard decreasing filtration is necessarily Hausdorff\footnote{``Separated'' in \cite{Bourbaki-CA}.} (i.e., satisfies
$\bigcap_{n\ge0}\,\calB_+^{n} = \{0\}$).
A counterexample can be obtained by taking the universal enveloping bialgebra of any simple Lie algebra (or, more generally, of any perfect Lie algebra); it will satisfy
$\bigcap_{n\ge0}\,\calB_+^{n} = \calB_+$.

\item[iii)] The property (i.e., the linear independence) does not hold if we consider the set of all characters (that is, $\Xi(\calB)$) instead of $\Xi(\calB)^{\times}$. 
For example, let $\calB$ be the
univariate $q$-infiltration bialgebra
$\tup{\kx, \Delta_{\uparrow_q}, \epsilon}$
from Example~\ref{exa.grouplike.q-pol}, and
assume that $q \in \kk$ is invertible.
Define two $\kk$-linear maps
$f : \calB \to \kk$ and $g : \calB \to \kk$ by
\[
f\tup{x^n} = \delta_{n,1}
\qquad \text{ and } \qquad
g\tup{x^n} = \left(-\dfrac{1}{q}\right)^n
\qquad \text{ for all } n \in \NN
\]
(where the $\delta_{n,1}$ is a Kronecker delta).
Note that $g$ is a character of $\calB$, while
$f \in \calB_1^\vee \subseteq \calB_\infty^\vee$.

We claim that $f \oast g = 0$.
In fact,
using \eqref{eq.infiltr.Deltaxm.1}, it is straightforward to see
that $\tup{f \oast g}\tup{x^m} = 0$ for each $m \in \NN$;
thus, $f \oast g = 0$.
Since $f$ is nonzero, this shows that
the set $\Xi(\calB)$ is not left $\calB_\infty^\vee$-linearly independent.
\end{enumerate}
\end{remark}

%

Note that our above proof of Theorem~\ref{thm.gen3} is somewhat
similar to the standard (Artin) proof of the linear independence
of characters in Galois theory (\cite[proof of Theorem 12]{Artin71}).
Shifting by $u$ in the former proof corresponds to replacing $x$
by $\alpha x$ in the latter.

\begin{corollary}\label{MonMapCor}
We suppose that $\calB$ is cocommutative, and $\kk$ is an integral domain.\\ 
Let $(g_x)_{x\in X}$ be a family of elements of $\Xi(\calB)^\times$ (the set of invertible characters of $\calB$), and let $\varphi_X: \kk[X]\to (\calB^\vee,\oast,\epsilon)$ be the $\kk$-algebra morphism that sends each $x \in X$ to $g_x$. In order for the family $(g_x)_{x\in X}$ (of elements of the commutative ring $(\calB^\vee,\oast,\epsilon)$) to be algebraically independent over the subring $(\calB_\infty^\vee,\oast,\epsilon)$, it is necessary and sufficient that the monomial map 
\begin{align}
m: \N^{(X)} &\to (\calB^\vee,\oast,\epsilon) , \nonumber\\
\alpha &\mapsto \varphi_X(X^\alpha) = \prod_{x \in X} g_x^{\alpha_x}
\label{MonMap}
\end{align}
(where $\alpha_x$ means the $x$-th entry of $\alpha$)
be injective.
\end{corollary}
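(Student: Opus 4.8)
The plan is to deduce this from Theorem~\ref{thm.gen3}~\textbf{(b)} after observing that every monomial $m(\alpha)$ is again an invertible character. First some bookkeeping. Since $\calB$ is cocommutative, the convolution algebra $\calB^\vee$ is commutative, so $\varphi_X$ is indeed well-defined on the commutative polynomial ring $\kk[X]$, and left, right and plain $\calB_\infty^\vee$-linear independence all coincide; moreover $\calB_\infty^\vee$ is a $\kk$-subalgebra of $\calB^\vee$ by Theorem~\ref{thm.gen3}~\textbf{(a)}. Since the monomials $X^\alpha$ (for $\alpha\in\N^{(X)}$) form a $\kk$-basis of $\kk[X]$ and satisfy $\varphi_X(X^\alpha)=m(\alpha)$, the family $(g_x)_{x\in X}$ is algebraically independent over $\calB_\infty^\vee$ if and only if the family $\bigl(m(\alpha)\bigr)_{\alpha\in\N^{(X)}}$ is $\calB_\infty^\vee$-linearly independent. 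The necessity of injectivity of $m$ is then immediate by contraposition: if $m(\alpha)=m(\beta)$ for two distinct $\alpha,\beta\in\N^{(X)}$, then $\epsilon\oast m(\alpha)-\epsilon\oast m(\beta)=0$ is a nontrivial $\calB_\infty^\vee$-linear relation among the $m(\gamma)$ (here $\epsilon\in\calB_0^\vee\subseteq\calB_\infty^\vee$ is the unity of $\calB^\vee$, and $-\epsilon\neq 0$ since the integral domain $\kk$ is nonzero), so the family is not $\calB_\infty^\vee$-linearly independent.

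For sufficiency -- the substantive direction -- assume that $m$ is injective. Each $m(\alpha)=\prod_{x\in X}g_x^{\oast\alpha_x}$ is a finite convolution product of the invertible characters $g_x$. By Proposition~\ref{prop.gen3-mon} the characters of $\calB$ form a monoid $\Xi(\calB)$ under $\oast$, whose unit group $\Xi(\calB)^\times$ contains each $g_x$ and is closed under products; hence $m(\alpha)\in\Xi(\calB)^\times$ for every $\alpha$. Injectivity of $m$ then makes $\bigl(m(\alpha)\bigr)_{\alpha\in\N^{(X)}}$ a family of pairwise distinct invertible characters. Now Theorem~\ref{thm.gen3}~\textbf{(b)} (which applies since $\kk$ is an integral domain) asserts precisely that every finite subfamily of $\Xi(\calB)^\times$ is $\calB_\infty^\vee$-linearly independent; applying this to every finite subfamily of $\bigl(m(\alpha)\bigr)_{\alpha}$ shows that the whole family is $\calB_\infty^\vee$-linearly independent. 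By the reduction of the previous paragraph, this is exactly the algebraic independence of $(g_x)_{x\in X}$ over $\calB_\infty^\vee$, and the corollary follows.

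There is no real obstacle here: the one observation that carries the argument is that a monomial in invertible characters is itself an invertible character, after which the corollary is a repackaging of Theorem~\ref{thm.gen3}~\textbf{(b)}. The only points requiring a modicum of care are the standard translation between algebraic independence over a subring and linear independence of the monomial images, and the use of cocommutativity of $\calB$ to pass freely between the commutative polynomial ring $\kk[X]$ and the commutative convolution algebra $\calB^\vee$ (so that ``left $\calB_\infty^\vee$-linearly independent'' in Theorem~\ref{thm.gen3}~\textbf{(b)} really is the notion we need).
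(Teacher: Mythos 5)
Your proof is correct and follows essentially the same route as the paper's: reduce algebraic independence over $(\calB_\infty^\vee,\oast,\epsilon)$ to $\calB_\infty^\vee$-linear independence of the monomial family $(m(\alpha))_{\alpha\in\N^{(X)}}$, observe via Proposition~\ref{prop.gen3-mon} that each $m(\alpha)$ is again an invertible character, and invoke Theorem~\ref{thm.gen3}~\textbf{(b)}. The only difference is cosmetic — you spell out the necessity direction (a repeated monomial yields the relation $\epsilon\oast m(\alpha)-\epsilon\oast m(\beta)=0$) which the paper compresses into ``this amounts to saying that $m$ is injective.''
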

\begin{proof}
Indeed, as $\calB$ is cocommutative (and $\kk$ commutative), $(\calB^\vee,\oast,\epsilon)$ is a commutative algebra (see, e.g., \cite[Exercise 1.5.5]{GriRei} or \cite[Chapter III, \S 11.2, Proposition 2]{Bourbaki-Alg1}).
Thus, the algebraic independence of the family $(g_x)_{x\in X}$ is equivalent to the statement that the family $\left(\prod_{x \in X} g_x^{\alpha_x}\right)_{\alpha\in \N^{(X)}}$ be $\calB_\infty^\vee$-linearly independent.
In other words, it is equivalent to the claim that the family 
$(m(\alpha))_{\alpha\in \N^{(X)}}$ (with $m$ as in \eqref{MonMap}) be $\calB_\infty^\vee$-linearly independent
(since $m(\alpha) = \prod_{x \in X} g_x^{\alpha_x}$ for each $\alpha \in \NN^{(X)}$).
It suffices to remark that the elements $m(\alpha)$ are invertible characters (since they are products of invertible characters).
Therefore, in view of Theorem~\ref{thm.gen3} \textbf{(b)}, the $(m(\alpha))_{\alpha\in \N^{(X)}}$ are $\calB_\infty^\vee$-linearly independent if they are distinct; but this amounts to saying that $m$ is injective.
\end{proof}

\begin{example}
\begin{enumerate}
\item[i)]
Let $\kk$ be an integral domain, and let us consider the bialgebra $\calB=\tup{\kk[x], \Delta, \epsilon}$ from Example~\ref{exa.grouplike.pol} (the standard univariate polynomial bialgebra).
As it is a particular case of the situation of the free algebra\footnote{See \cite[Proposition 1.6.7]{GriRei} and \cite[Section 1.4]{FLA}.}, we will let $\shuffle$ denote the convolution $\oast$ on its dual $\kk$-module $\calB^\vee = \kk[x]^\vee \cong \kk[[x]]$; thus, $\tup{\kk[[x]],\shuffle,1}$ becomes a commutative $\kk$-algebra.

For every $\alpha\in \kk$, there exists only one character of $\kk[x]$ sending 
$x$ to $\alpha$; we will denote this character by $(\alpha.x)^*\in \kk[[x]]$ (see \cite{Eil,KSOP,DucTol09,DGM2} for motivations about this notation).
Thus, $\Xi\tup{\calB} = \set{(\alpha.x)^* \mid \alpha \in \kk}$.
It is easy to check that $(\alpha.x)^*\shuffle (\beta.x)^*=\bigl((\alpha+\beta).x\bigr)^*$ for any $\alpha, \beta \in \kk$.
Thus, any $c_1, c_2, \ldots, c_k \in \kk$ and any $\alpha_1, \alpha_2, \ldots, \alpha_k \in \NN$ satisfy
\begin{align}
&\tup{\tup{c_1.x}^*}^{\shuffle \alpha_1}
\shuffle
\tup{\tup{c_2.x}^*}^{\shuffle \alpha_2}
\shuffle
\cdots
\shuffle
\tup{\tup{c_k.x}^*}^{\shuffle \alpha_k}
\nonumber \\
&= \bigl((\alpha_1 c_1 + \alpha_2 c_2 + \cdots + \alpha_k c_k).x\bigr)^*\ .
\label{shuffle_multiindex1}
\end{align}
The monoid $\Xi\tup{\calB}$ is thus isomorphic to the abelian group $\tup{\kk,+,0}$; in particular, it is a group, so that $\Xi\tup{\calB}^\times = \Xi\tup{\calB}$.
 
The decreasing filtration of $\calB$ is given by $\calB_+^n=\kk[x]_{\geq n}$ (the ideal of polynomials of degree $\geq n$).
Hence, the reader may check easily that $\calB_n^\vee=\kk[x]_{\leq n}$ (the module of polynomials of degree $\leq n$), whence 
$\calB_\infty^\vee=\kk[x]$.

Now, let $\left((c_i.x)^*\right)_{i \in I}$ be a family of elements of $\Xi\left(\calB\right)^\times$.
Taking $X = I$ and $g_i = (c_i.x)^*$ for each $i \in I$,
we can then apply Corollary~\ref{MonMapCor}, and we conclude that
the family $\left((c_i.x)^*\right)_{i \in I}$ of elements of the power series ring $\kk[[x]]$
is algebraically independent over the subring $(\calB_\infty^\vee,\oast,\epsilon) = \kk[x]$
if and only if the monomial map \eqref{MonMap} is injective.

But \eqref{shuffle_multiindex1} shows that the monomial map \eqref{MonMap} is injective if and only if the family $(c_i)_{i\in I}$ is $\Z$-linearly independent in $\kk$.

To illustrate this, take $\kk=\overline{\Q}$ (the algebraic closure of $\Q$) and $c_n=\sqrt{p_n}\in \kk$, where $p_n$ is the 
$n$-th prime number.  
 Our above observations show that the family of series $\big((\sqrt{p_n}.x)^*\big)_{n\ge1}$ is algebraically independent over the polynomials (i.e., over $\overline{\Q}[x]$) within the commutative $\overline{\Q}$-algebra 
 $\tup{\overline{\Q}[[x]],\shuffle,1}$ (since the family $(\sqrt{p_n})_{n\ge1}$ is $\Z$-linearly independent\footnote{This holds even more generally if the $p_n$ are squarefree positive integers rather than primes. See \url{https://math.stackexchange.com/questions/30687/} for proofs and references.}).
 This example can be double-checked using partial fractions decompositions as, in fact, $(\sqrt{p_n}.x)^*=\dfrac{1}{1-\sqrt{p_n}x}$ (this time, the inverse is taken within the ordinary product in $\kk[[x]]$) and 
\[
\Bigl(\dfrac{1}{1-\sqrt{p_n}x}\Bigr)^{\shuffle n}=\dfrac{1}{1-n\sqrt{p_n}x} .
\]

\item[ii)] The preceding example can be generalized as follows:
Let $\kk$ still be an integral domain; let $V$ be a $\kk$-module, and let $\calB=\tup{T(V),\conc,1_{T(V)},\Delta_{\boxtimes},\epsilon}$ be the standard tensor conc-bialgebra\footnote{The one defined by 
\[
\Delta_{\boxtimes}(1)=1\otimes 1 \mbox{ and } \Delta_{\boxtimes}(u)=u\otimes 1 + 1\otimes u\ ;\ 
\epsilon(u)=0\mbox{ for all }u\in V.
\]
}. 
For every linear form $\varphi\in V^\vee$, there is a unique character $\varphi^*$ of 
$\tup{T(V),\conc,1_{T(V)}}$ such that all $u\in V$ satisfy
\begin{equation}
\scal{\varphi^*}{u}=\scal{\varphi}{u} .
\end{equation} 
Again, it is easy to check\footnote{For this bialgebra $\shuffle$ stands for $\oast$ on the space $\Hom(\calB, \kk)$.} that 
$(\varphi_1)^*\shuffle (\varphi_2)^*=\bigl(\varphi_1+\varphi_2\bigr)^*$ for any $\varphi_1, \varphi_2 \in V^\vee$, because, from Lemma~\ref{lem.BtoA.bimap*bimap}, both sides are characters of $\tup{T(V),\conc,1_{T(V)}}$ so that the equality has only to be checked on $V$. Again, from this, any 
$\varphi_1, \varphi_2, \ldots, \varphi_k \in V^\vee$ and any $\alpha_1, \alpha_2, \ldots, \alpha_k \in \NN$ satisfy
\begin{align}
& \tup{\tup{\varphi_1}^*}^{\shuffle \alpha_1}
  \shuffle
  \tup{\tup{\varphi_2}^*}^{\shuffle \alpha_2}
  \shuffle
  \cdots
  \shuffle
  \tup{\tup{\varphi_k}^*}^{\shuffle \alpha_k}
\nonumber\\
&= \bigl(\alpha_1 \varphi_1 + \alpha_2 \varphi_2 + \cdots + \alpha_k \varphi_k \bigr)^*\ .
\label{shuffle_multiindex2}
\end{align}
The decreasing filtration of $\calB$ is given by $\calB_+^n=\bigoplus_{k\geq n}T_k(V)$ (the ideal of tensors of degree $\geq n$) and the reader may check easily that, in this case, $\calB_\infty^\vee$ is the shuffle algebra of finitely supported linear forms -- i.e., for each $\Phi \in \calB^\vee$, we have the equivalence
\[
\Phi\in \calB_\infty^\vee \Longleftrightarrow (\exists N\in \N)(\forall k\geq N)(\Phi(T_k(V))=\{0\}) .
\] 
Then, Corollary \ref{MonMapCor} shows that $(\varphi_i^*)_{i\in I}$ are $\calB_\infty^\vee$-algebraically independent within  $\tup{T(V)^\vee,\shuffle,\epsilon}$ if and only if the corresponding monomial map is injective, and \eqref{shuffle_multiindex2} shows that it is so if and only if the family $(\varphi_i)_{i\in I}$ of 
linear forms is $\Z$-linearly independent in $V^\vee$.
\end{enumerate}

\end{example}
\subsection{\label{subsect.dual-tensor}Appendix: Remarks on the dual of a tensor product}

In Subsection~\ref{subsect.chars-grouplikes}, we have mentioned the difficulties of defining the dual coalgebra of a $\kk$-algebra in the general case when $\kk$ is not necessarily a field.
As we said, these difficulties stem from the fact that the canonical map $M^\vee \otimes N^\vee \to \left(M \otimes N\right)^\vee$ (for two $\kk$-modules $M$ and $N$) is not generally an isomorphism, and may fail to be injective even if $\kk$ is an integral domain.
Even worse, the $\kk$-module $M^\vee \otimes N^\vee$ may fail to be torsionfree (which automatically precludes any $\kk$-linear map $M^\vee \otimes N^\vee \to \left(M \otimes N\right)^\vee$, not just the canonical one, from being injective).
We shall soon give an example where this happens (Example~\ref{exa.dual-tensor.not-tf}).
First, let us prove a positive result:

\begin{proposition}\label{DualInj}
Let $\kk$ be an integral domain.
Let $M$ and $N$ be two $\kk$-modules.
Consider the canonical $\kk$-linear map
\begin{equation}\label{DualInjArr}
\Phi:M^\vee\otimes_{\kk} N^\vee\longrightarrow (M\otimes_{\kk} N)^\vee 
\end{equation}
defined by 
\[
\left(\Phi(f\otimes g)\right)
\left(u\otimes v\right) = f(u) g(v)
\qquad \text{for all $f \in M^\vee$, $g \in N^\vee$, $u \in M$ and $v \in N$}.
\]
Then, the following are equivalent:
\begin{enumerate}
\item The $\kk$-module $M^\vee\otimes_{\kk} N^\vee$ is torsionfree.
\item The map $\Phi$ is injective.
\end{enumerate} 
\end{proposition}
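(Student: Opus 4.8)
The plan is to prove the equivalence by establishing both implications, noting first that the implication "2 $\Longrightarrow$ 1" is essentially formal. Indeed, if $\Phi$ is injective, then $M^\vee \otimes_\kk N^\vee$ embeds into $(M \otimes_\kk N)^\vee$; but the dual $\kk$-module $(M \otimes_\kk N)^\vee = \Hom_\kk(M \otimes_\kk N, \kk)$ is torsionfree whenever $\kk$ is an integral domain (if $a \in \kk \setminus \{0\}$ and $a\varphi = 0$ for some $\varphi \in (M \otimes_\kk N)^\vee$, then $a\varphi(z) = 0$ in $\kk$ for all $z$, hence $\varphi(z) = 0$ since $\kk$ is a domain, so $\varphi = 0$). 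A submodule of a torsionfree module is torsionfree, so $M^\vee \otimes_\kk N^\vee$ is torsionfree. This takes only a few lines.

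The substantive direction is "1 $\Longrightarrow$ 2": assuming $M^\vee \otimes_\kk N^\vee$ is torsionfree, show $\Phi$ is injective. The key idea is to pass to the fraction field $\K = \operatorname{Frac}(\kk)$ and use that a torsionfree $\kk$-module $P$ embeds into its localization $P \otimes_\kk \K$ (torsionfreeness is exactly the statement that $P \to P \otimes_\kk \K$ is injective). So I would form the commutative square relating $\Phi$ to its base-changed version $\Phi_\K$, and show that the composite $M^\vee \otimes_\kk N^\vee \to (M \otimes_\kk N)^\vee \to \big((M \otimes_\kk N)^\vee\big) \otimes_\kk \K$ factors through an injection. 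More precisely: since $M^\vee \otimes_\kk N^\vee$ is torsionfree, the natural map $\lambda : M^\vee \otimes_\kk N^\vee \to (M^\vee \otimes_\kk N^\vee) \otimes_\kk \K$ is injective; so it suffices to show $\Phi$ becomes injective after composing with $\lambda$, equivalently that the induced map $(M^\vee \otimes_\kk N^\vee) \otimes_\kk \K \to (M \otimes_\kk N)^\vee \otimes_\kk \K$ is injective on the image of $\lambda$, for which it is enough that this induced map itself be injective.

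The remaining point is thus to identify this induced map with a known injective map over the field $\K$. There are canonical $\K$-linear isomorphisms $(M^\vee \otimes_\kk N^\vee) \otimes_\kk \K \cong (M^\vee \otimes_\kk \K) \otimes_\K (N^\vee \otimes_\kk \K)$ and, since $\K$ is flat over $\kk$, also $M^\vee \otimes_\kk \K \cong (M \otimes_\kk \K)^{\vee_\K}$ when $M$ is — wait, this last identification requires care for infinitely generated $M$. The cleanest route avoids it: use instead that $(M \otimes_\kk N)^\vee \otimes_\kk \K$ maps to $\big((M \otimes_\kk N) \otimes_\kk \K\big)^{\vee_\K} = (M_\K \otimes_\K N_\K)^{\vee_\K}$, and that over the field $\K$ the canonical map $M_\K^{\vee_\K} \otimes_\K N_\K^{\vee_\K} \to (M_\K \otimes_\K N_\K)^{\vee_\K}$ is injective (pure tensors of dual vectors separate — this is exactly "Fact 1" used in the proof of Proposition~\ref{prop.char-vs-gl-1}, proved by choosing bases). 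I would then chase the diagram: an element $\xi \in M^\vee \otimes_\kk N^\vee$ with $\Phi(\xi) = 0$ has trivial image all the way along the composite to $(M_\K \otimes_\K N_\K)^{\vee_\K}$; tracing back through the field-level injectivity and the isomorphism $(M^\vee \otimes_\kk N^\vee) \otimes_\kk \K \cong M_\K^{\vee_\K} \otimes_\K N_\K^{\vee_\K}$ — here I do need the natural map $M^\vee \otimes_\kk \K \to M_\K^{\vee_\K}$, which is injective since $M^\vee$ is torsionfree, and that suffices for a diagram chase even without surjectivity — one finds that $\lambda(\xi) = 0$, hence $\xi = 0$ by torsionfreeness.

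The main obstacle I anticipate is bookkeeping: getting the commutative diagram right and being scrupulous about which canonical maps are isomorphisms versus merely injections when $M$ and $N$ are not finitely generated. The map $M^\vee \otimes_\kk \K \to (M \otimes_\kk \K)^{\vee_\K}$ need not be surjective, so I must phrase the argument so that only its injectivity (a consequence of $M^\vee$ being torsionfree over the domain $\kk$) is used, and likewise for $N$. Once the diagram is assembled with all arrows labelled and their injectivity/isomorphism status recorded, the chase itself is short. I would also remark that this gives a cleaner conceptual statement: $\Phi$ is injective precisely because the obstruction lives entirely in the torsion of $M^\vee \otimes_\kk N^\vee$.
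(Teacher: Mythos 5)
Your proposal is correct and coincides with the paper's own second proof of Proposition~\ref{DualInj}: both directions are handled the same way, including the reduction to the fraction field, the use of the injectivity (not surjectivity) of the canonical map from the localized dual to the dual of the localization, and the separation of pure tensors over a field. The only quibble is your stated reason for the injectivity of $M^\vee\otimes_{\kk}\K\to \left(M_{\K}\right)^{\vee_{\K}}$ --- this follows from the injectivity of $\kk\hookrightarrow\K$ (every element of the localization has the form $f\otimes\frac{1}{s}$, and its image vanishes only if $f=0$), not from the torsionfreeness of $M^\vee$, which instead justifies the injectivity of $M^\vee\otimes_{\kk} N^\vee\to\left(M^\vee\otimes_{\kk} N^\vee\right)\otimes_{\kk}\K$; the paper also records a more elementary first proof (clearing denominators in a minimal-length tensor representation) that avoids localization entirely.
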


\begin{proof}[First proof of Proposition~\ref{DualInj}.]
1. $\Longrightarrow$ 2.)
Assume that statement 1. holds. Thus,
the $\kk$-module $M^\vee\otimes_{\kk} N^\vee$ is torsionfree.

We must prove that the map $\Phi$ is injective.
Assume the contrary. Thus, $\ker\Phi \neq 0$.
Therefore, there exists some nonzero
$t \in M^\vee \otimes N^\vee$ such that $\Phi(t)=0$.
Consider this $t$.
Consider all choices of nonzero $m \in \kk$ and
of elements $u_1, u_2, \ldots, u_r \in M^\vee$ and
$v_1, v_2, \ldots, v_r \in N^\vee$ such that
\begin{equation}
mt = \sum_{i=1}^r u_i \otimes v_i .
\label{pf.DualInj.mt=}
\end{equation}
Among all such choices, choose one for which $r$ is minimum.

From the fact that $r$ is minimum, we conclude that the
elements $v_1, v_2, \ldots, v_r$ of $N^\vee$
are $\kk$-linearly independent\footnote{%
Here is the \textit{proof} in detail:
Assume the contrary. Thus,
$\lambda_1 v_1 + \lambda_2 v_2 + \cdots + \lambda_r v_r = 0$
for some scalars
$\lambda_1, \lambda_2, \ldots, \lambda_r \in \kk$,
not all of which are zero.
Consider these scalars, and assume WLOG that
$\lambda_r \neq 0$.
Hence, $\lambda_r m \neq 0$ (since $\kk$ is an
integral domain) and
$\lambda_r v_r = -\sum_{i=1}^{r-1} \lambda_i v_i$
(since
$\lambda_1 v_1 + \lambda_2 v_2 + \cdots + \lambda_r v_r = 0$).
Now, multiplying the equality \eqref{pf.DualInj.mt=}
with $\lambda_r$, we obtain
\begin{align*}
\lambda_r m t
&= \lambda_r \sum_{i=1}^r u_i \otimes v_i
= \sum_{i=1}^r u_i \otimes \lambda_r v_i
= \sum_{i=1}^{r-1} \underbrace{u_i \otimes \lambda_r v_i}_{= \lambda_r u_i \otimes v_i}
+ u_r \otimes \underbrace{\lambda_r v_r}_{= -\sum_{i=1}^{r-1} \lambda_i v_i} \\
&= \sum_{i=1}^{r-1} \lambda_r u_i \otimes v_i
+ u_r \otimes \left(-\sum_{i=1}^{r-1} \lambda_i v_i\right)
= \sum_{i=1}^{r-1} \lambda_r u_i \otimes v_i
- \sum_{i=1}^{r-1} \lambda_i u_r \otimes v_i \\
&= \sum_{i=1}^{r-1} \left(\lambda_r u_i - \lambda_i u_r\right) \otimes v_i .
\end{align*}
This is an equality of the same shape as
\eqref{pf.DualInj.mt=}, but with $r-1$ instead of
$r$ (since $\lambda_r m \neq 0$).
Hence, it contradicts the minimality of $r$.
This contradiction completes our proof.}.
But the map $\Phi$ is $\kk$-linear; hence,
$\Phi(mt) = m \Phi(t) = 0$ (since $\Phi(t) = 0$).
Thus,
$0 = \Phi(mt) = \sum_{i=1}^r \Phi(u_i \otimes v_i)$
(by \eqref{pf.DualInj.mt=}).
Hence, for all $x \in M$ and $y \in N$, we have 
\begin{equation}
0 = \left(\sum_{i=1}^r \Phi(u_i \otimes v_i)\right)(x\otimes y)=\sum_{i=1}^r u_i(x) v_i(y)
\end{equation}
(by the definition of $\Phi$).
This can be rewritten as
\begin{equation}
0=\sum_{i=1}^r u_i(x) v_i \text{ in $N^\vee$} \qquad \text{for all $x \in M$}.
\end{equation}
But this entails that $u_i(x) = 0$ for all $1\leq i\leq r$ (since
$v_1, v_2, \ldots, v_r$
are $\kk$-linearly independent).
Since this holds for all $x\in M$, we thus find that
$u_i = 0$ for all $1\leq i\leq r$.
Hence, \eqref{pf.DualInj.mt=} shows that
$mt = 0$, so that $t = 0$ (since
$M^\vee \otimes N^\vee$ is torsionfree).
This contradicts $t \neq 0$. This contradiction completes
the proof of 1. $\Longrightarrow$ 2.

2. $\Longrightarrow$ 1.)
The $\kk$-module $\left(M \otimes_{\kk} N\right)^\vee$
is torsionfree,
since the dual of any $\kk$-module is torsionfree.
Hence,
if $\Phi$ is injective, then $M^\vee\otimes_{\kk} N^\vee$ is torsionfree
(since a submodule of a torsionfree $\kk$-module is
always torsionfree).
\end{proof}

\begin{proof}[Second proof of Proposition~\ref{DualInj} (sketched).]
1. $\Longrightarrow$ 2.)
Assume that statement 1. holds. Thus,
the $\kk$-module $M^\vee\otimes_{\kk} N^\vee$ is torsionfree.

We must prove that the map $\Phi$ is injective.

Let $\FF$ be the fraction field of $\kk$. For any $\kk$-module $P$,
we let $P_{\FF}$ denote the $\FF$-vector space $\FF \otimes P$
(which can also be defined as the localization of $P$ with
respect to the multiplicatively closed subset $\kk \setminus \{0\}$
of $\kk$),
and we let $\iota_P$ denote the canonical $\kk$-linear map
$P \to P_{\FF}$. We note that $\iota_P$ is injective when $P$
is torsionfree.

If $V$ is an $\FF$-vector space, then we shall write $V^*$ for
the dual space $\Hom_{\FF}\left(V, \FF\right)$ of $V$.
This should be distinguished from the $\kk$-module dual
of $V$, which we denote by $V^\vee$.

For any $\kk$-module $P$, the canonical $\FF$-linear
map $\rho_P : (P^\vee)_{\FF} \to (P_{\FF})^*$
(which sends each $f \in P^\vee$ to the unique
$\FF$-linear map $P_{\FF} \to \FF$ that extends $f$)
is injective.
(This is easily checked by hand, using the fact that
the map $\kk \to \FF$ is injective.)
Hence, we obtain two injective $\FF$-linear maps
$\rho_M : (M^\vee)_{\FF} \to (M_{\FF})^*$ and
$\rho_N : (N^\vee)_{\FF} \to (N_{\FF})^*$.
Their tensor product is an injective $\FF$-linear map
$\rho_M \otimes_{\FF} \rho_N
: (M^\vee)_{\FF} \otimes_{\FF} (N^\vee)_{\FF} \to (M_{\FF})^* \otimes_{\FF} (N_{\FF})^*$
(since the
tensor product of two injective $\FF$-linear maps over
$\FF$ is injective, because $\FF$ is a field).

For any $\kk$-modules $M$ and $N$,
there is a canonical isomorphism
$\nu_{M, N} : \left(M \otimes N\right)_{\FF} \to M_{\FF}
\otimes_{\FF} N_{\FF}$.
(This is a general property of base change.)
Hence, $M_{\FF} \otimes_{\FF} N_{\FF} \cong
\left(M \otimes N\right)_{\FF}$, so that
$\left(M_{\FF} \otimes_{\FF} N_{\FF}\right)^*
\cong \left(\left(M \otimes N\right)_{\FF}\right)^*$.

Also, it is known from linear algebra
that the canonical map
$\kappa_{V, W} : V^* \otimes_{\FF} W^* \to \left(V \otimes_{\FF} W\right)^*$
is injective whenever $V$ and $W$ are two $\FF$-vector spaces.

We assumed that $M^\vee \otimes N^\vee$ is torsionfree.
Thus, the map $\iota_{M^\vee \otimes N^\vee}$ is injective.
We have the following commutative diagram of $\kk$-modules:
\begin{align}
\xymatrix{
& M^\vee \otimes N^\vee \ar[ddr]^\Phi \arinjrev[dl]_{\iota_{M^\vee \otimes N^\vee}} \\
\left(M^\vee \otimes N^\vee\right)_{\FF} \ar[d]_{\nu_{M^\vee, N^\vee}}^\cong \\
\left(M^\vee\right)_{\FF} \otimes_{\FF} \left(N^\vee\right)_{\FF} \arinj[d]_{\rho_M \otimes_{\FF} \rho_N} & & \left(M\otimes N\right)^\vee \ar[d]_{\iota_{\left(M \otimes N\right)^\vee}} \\
\left(M_{\FF}\right)^* \otimes_{\FF} \left(N_{\FF}\right)^* \arinj[d]_{\kappa_{M_{\FF}, N_{\FF}}} & & \left(\left(M \otimes N\right)^\vee\right)_{\FF} \ar[ddl]_{\rho_{M \otimes N}} \\
\left(M_{\FF} \otimes_{\FF} N_{\FF}\right)^* \ar[dr]^\cong \\
& \left(\left(M \otimes N\right)_{\FF}\right)^*
}
\end{align}

All maps along the left boundary of this diagram
are injective; thus, their composition is injective as well.
But this composition equals the composition of the maps
along the right boundary of the diagram.
Hence, the latter composition is injective.
Thus, $\Phi$ (being the initial map in this composition) must
be injective.
This proves 1. $\Longrightarrow$ 2.

2. $\Longrightarrow$ 1.)
As in the First proof above.
\end{proof}

We can now give an example (communicated to us by Jeremy Rickard) where $M^\vee \otimes N^\vee$ fails to be torsionfree:

\begin{example} \label{exa.dual-tensor.not-tf}
Let $\kk$ be a field, $I$ and $J$ infinite sets,
and $A$ the $\kk$-subalgebra of
\[
\kk(t)[x_i,y_j: i\in I,j\in J]
\]
generated by 
\[
\{x_i,y_j,tx_i,t^{-1}y_j: i\in I \text{ and } j\in J\}.
\]
Then, $A$ is an integral domain.
In this example, all duals are taken with respect to $A$ (not with respect to $\kk$); that is, $M^\vee$ means $\Hom_A\left(M, A\right)$.

The free $A$-modules $A^{(I)}$, $A^{(J)}$ and $A^{(I \times J)}$ satisfy
$\left(A^{(I)}\right)^\vee \cong A^I$,
$\left(A^{(J)}\right)^\vee \cong A^J$,
$\left(A^{(I \times J)}\right)^\vee \cong A^{I \times J}$, and
$A^{(I \times J)} \cong A^{(I)} \otimes_A A^{(J)}$.
Moreover, there is a natural $A$-linear map
\begin{align*}
\Psi : A^I \otimes_A A^J &\to A^{I\times J}, \\
\left(a_i\right)_{i \in I} \otimes_A \left(b_j\right)_{j \in J} &\mapsto
\left(a_i b_j\right)_{\left(i, j\right) \in I \times J}
\end{align*}
that forms a commutative diagram
\begin{equation}
\xymatrix@C=5pc{
A^I \otimes_A A^J \ar[r]^\Psi \ar[d]_\cong & A^{I \times J} \ar[d]_\cong \\
\left(A^{(I)}\right)^\vee \otimes_A \left(A^{(J)}\right)^\vee \ar[r]_-\Phi & \left(A^{(I\times J)}\right)^\vee
}
\label{eq.exa.dual-tensor.not-tf.cd}
\end{equation}
with the canonical map $\Phi : \left(A^{(I)}\right)^\vee \otimes_A \left(A^{(J)}\right)^\vee
\to \left(A^{(I\times J)}\right)^\vee$ defined as
in Proposition~\ref{DualInj}.

Now, we define an element
\[
\xi
= (tx_i)_{i\in I} \otimes_A (t^{-1}y_j)_{j\in J} - (x_i)_{i\in I} \otimes_A (y_j)_{j\in J}
\in A^I \otimes_A A^J .
\]
It is clear by computation that $\Psi\tup{\xi} = 0$.
We shall now show that $\xi \neq 0$.

Indeed, assume the contrary. Thus, $\xi = 0$.
Hence, $\xi$ can be ``shown to be zero using only finitely many elements of $A$''
-- i.e., there exists a finitely generated $\kk$-subalgebra $B$ of $A$ such that
\begin{equation}
(tx_i)_{i\in I}\otimes_B (t^{-1}y_j)_{j\in J}
- (x_i)_{i\in I}\otimes_B (y_j)_{j\in J}
=0
\qquad \text{ in } A^I \otimes_B A^J .
\label{eq.exa.dual-tensor.not-tf.overB}
\end{equation}
(Indeed, $A^I \otimes_A A^J$ can be viewed as a quotient of $A^I \otimes_{\mathbb{Z}} A^J$ modulo the $\mathbb{Z}$-submodule spanned by all the differences $ua \otimes_{\mathbb{Z}} v - u \otimes_{\mathbb{Z}} av$ for $u \in A^I$, $v \in A^J$ and $a \in A$. Thus, $\xi = 0$ means that the element
$(tx_i)_{i\in I}\otimes_{\mathbb{Z}} (t^{-1}y_j)_{j\in J}
- (x_i)_{i\in I}\otimes_{\mathbb{Z}} (y_j)_{j\in J}$
of $A^I \otimes_{\mathbb{Z}} A^J$ is a $\mathbb{Z}$-linear combination of finitely many such differences.
Now take the (finitely many) $a$'s involved in these differences, and define $B$ to be the $\kk$-subalgebra of $A$ generated by these finitely many $a$'s.
Then, \eqref{eq.exa.dual-tensor.not-tf.overB} holds, as desired.)

Consider this $B$. Choose $r\in I$ and $s\in J$ so that
\[
B\subseteq \kk(t)[x_i,y_j:i\neq r \text{ and } j\neq s].
\]
(Such $r$ and $s$ exist, since $B$ is finitely generated.)

If $m$ is a monomial in the variables $\{x_i,y_j:i\in I,j\in J\}$, then $\deg_x m$ shall denote the total degree of $m$ in the variables $x_i$, while $\deg_y$ shall denote the total degree of $m$ in the variables $y_j$.
A \emph{good monomial} shall mean a product of the form $t^lm$, where $m$ is a monomial in the variables $\{x_i,y_j:i\in I,j\in J\}$ and $l \in \mathbb{Z}$ satisfies $-\deg_y m \leq l \leq \deg_x m$.
As a $\kk$-vector space, $A$ has a basis consisting of all good monomials.

A good monomial $t^lm$ will be called \emph{strict} if $m$ is just a power of $x_r$ or just a power of $y_s$. (In particular, the monomial $1$ is strict.)
The non-strict good monomials span a proper ideal of $A$.
Let $\bar{A}$ be the corresponding quotient algebra of $A$; then the image $\bar{B}$ of $B$ in $\bar{A}$ is just (a copy of) $\kk$ (since every element of $B$ is a $\kk$-linear combination of non-strict good monomials and of the monomial $1$).
For any $f \in A$, we let $\overline{f}$ denote the canonical projection of $f$ on the quotient ring $\bar{A}$.

Now, we have a $\kk$-linear map obtained by composing
\[
A^I \otimes_B A^J
\to \bar{A}^I\otimes_{\bar{B}}\bar{A}^J
\overset{\cong}{\to} \bar{A}^I\otimes_{\kk}\bar{A}^J
\to \bar{A} \otimes_{\kk} \bar{A} ,
\]
where
\begin{itemize}
\item the first arrow sends each tensor $u \otimes_B v \in A^I \otimes_B A^J$ to the tensor $\overline{u} \otimes_{\bar{B}} \overline{v} \in \bar{A}^I\otimes_{\bar{B}}\bar{A}^J$ (with $\overline{u}$ denoting the projection of $u \in A^I$ to $\bar{A}^I$, and with $\overline{v}$ defined similarly);
\item the second arrow is due to $\bar{B}$ being (a copy of) $\kk$;
\item the third arrow is obtained by tensoring
\begin{align*}
& \text{ the canonical projection } \bar{A}^I \to \bar{A},\ \tup{a_i}_{i\in I} \mapsto a_r \\
\text{with } & \text{ the canonical projection } \bar{A}^J \to \bar{A},\ \tup{b_j}_{j\in J} \mapsto b_s .
\end{align*}
\end{itemize}
Applying this $\kk$-linear map
to both sides of the equation \eqref{eq.exa.dual-tensor.not-tf.overB}, we obtain
\[
\overline{tx_r} \otimes_{\kk} \overline{t^{-1}y_s} - \overline{x_r} \otimes_{\kk} \overline{y_s}
=0 \qquad \text{ in } \bar{A} \otimes_{\kk} \bar{A} .
\]
But this contradicts the fact that the four basis elements
$\overline{tx_r}, \overline{t^{-1}y_s}, \overline{x_r}, \overline{y_s}$
of $\bar{A}$ are $\kk$-linearly independent.

Hence, our assumption ($\xi=0$) was false.
Thus, $\xi \neq 0$. In view of $\Psi\tup{\xi} = 0$,
this shows that $\Psi$ is not injective.
Due to the commutative diagram \eqref{eq.exa.dual-tensor.not-tf.cd},
this means that $\Phi$ is not injective.
Hence, Proposition~\ref{DualInj} shows that the $A$-module
$\left(A^{(I)}\right)^\vee \otimes_A \left(A^{(J)}\right)^\vee$ is
not torsionfree.
In view of \eqref{eq.exa.dual-tensor.not-tf.cd}, this means in turn
that the $A$-module $A^I \otimes_A A^J$ is not torsionfree
(despite being the tensor product of the two torsionfree, and even
torsionless, $A$-modules $A^I$ and $A^J$).

The presence of torsion in $A^I \otimes_A A^J$ can also be seen directly
using the element $\xi$ from the above argument:
For any $s\in I$, we have
\[
x_s \xi
= x_s\left((tx_i)_{i\in I} \otimes_A (t^{-1}y_j)_{j\in J}-(x_i)_{i\in I} \otimes_A (y_j)_{j\in J}\right)=0,
\]
since
\begin{align*}
x_s\left((tx_i)_{i\in I} \otimes_A (t^{-1}y_j)_{j\in J}\right)
&=(tx_sx_i)_{i\in I} \otimes_A (t^{-1}y_j)_{j\in J}\\
&=(x_i)_{i\in I} \otimes_A (x_sy_j)_{j\in J}\\
&=x_s\left((x_i)_{i\in I} \otimes_A (y_j)_{j\in J}\right).
\end{align*}
\end{example}

Another example of a non-injective canonical map
$A^I \otimes_A A^J \to A^{I \times J}$
(and thus, of a tensor product of the form
$M^\vee \otimes_A N^\vee$ having torsion)
can be found in the last two paragraphs of
\cite{Goodea72}.

Note that such examples can only exist when $A$
is not Noetherian.
Indeed, it has been shown in \cite[Proposition 1.2]{AbGoWi99}
that if $A$ is a Noetherian ring, then the
canonical map
$A^I \otimes_A A^J \to A^{I \times J}$ for any
two sets $I$ and $J$ is injective.


\begin{thebibliography}{9999999999999}                                                                                   %
\bibitem[Abe80]{Abe}
\textsc{Eiichi Abe},
\textit{Hopf algebras},
Cambridge Tracts in Mathematics \textbf{74}.
Cambridge University Press, Cambridge-New York, 1980.

\bibitem[AbGoWi99]{AbGoWi99}
\textsc{J. Y. Abuhlail, J. G\'omez-Torrecillas, R. Wisbauer},
\textit{Dual coalgebras of algebras over commutative rings},
Journal of Pure and Applied Algebra \textbf{153} (2000),
pp. 107--120.

\bibitem[Artin71]{Artin71}
\textsc{Emil Artin},
\textit{Galois Theory},
Notre Dame Mathematical Lectures \textbf{2},
University of Notre Dame Press, 1971.
\url{https://projecteuclid.org/euclid.ndml/1175197041}

\bibitem[BeRe88]{Berstel}
\textsc{J. Berstel, C. Reutenauer},
\textit{Rational series and their languages}, Springer-Verlag, 1988.

\bibitem[GoF04]{GoF04}\textsc{G. Duchamp, K.A. Penson, A.I. Solomon, A. Horzela and P. Blasiak},
\textit{One-Parameter 
Groups and Combinatorial Physics}, World Scientific 2004.\\
\arxiv{quant-ph/0401126}.

\bibitem[Bourba89]{Bourbaki-Alg1}\textsc{N. Bourbaki}, \textit{Algebra I (Chapters 1-3)}, Springer 1989.

\bibitem[Bourba98]{Bourbaki-Lie1}\textsc{N. Bourbaki}, \textit{Lie groups and Lie algebras (Chapters 1-3)}, Springer 1998.

\bibitem[Bourba72]{Bourbaki-CA}\textsc{N. Bourbaki}, \textit{Commutative Algebra}, Hermann 1972.

\bibitem[Bui12]{Bui} \textsc{V. C. Bui}, \textit{Hopf algebras of shuffles and quasi-shuffles. Constructions of dual bases}, Master dissertation (2012).

\bibitem[BDMKT16]{Bui2}\textsc{V. C. Bui, G. H. E. Duchamp, Hoang Ngoc Minh, L. Kane, C. Tollu}, \textit{Dual bases for noncommutative symmetric and quasi-symmetric functions via monoidal factorization}, Journal of Symbolic Computation \textbf{75} (2016), pp. 56--73.

\bibitem[Car07]{Cartier}\textsc{Pierre Cartier},
\textit{A primer of Hopf algebras}, in:
Cartier P., Moussa P., Julia B., Vanhove P. (eds),
Frontiers in Number Theory, Physics, and Geometry II, (2007).

\bibitem[CaFo69]{CaFo}\textsc{Pierre Cartier, Dominique Foata}, \textit{Commutation and Rearrangements}, An electronic reedition of the monograph 
\textit{Probl\`emes combinatoires de commutation et r\'earrangements}, Lect. Notes in Math. \textbf{85} Springer-Verlag (1969), with three new appendices by D. Foata, B. Lass and Ch. Krattenthaler (2006).

\bibitem[ChFoLy58]{CFL}\textsc{K.T.~Chen, R.H.~Fox, R.C.~Lyndon}, \textit{Free differential calculus, IV: The quotient groups of the lower central series}, Ann. of Math. (2) \textbf{68} (1958), pp. 163--178.

\bibitem[DGM2-20]{DGM2}\textsc{G\'erard H. E. Duchamp, Darij Grinberg, Hoang Ngoc Minh},
\textit{Kleene stars in shuffle bialgebras}, forthcoming.

\bibitem[Duc97]{D21} \textsc{G. Duchamp, C. Reutenauer}, \textit{Un crit\`ere de rationalit\'e 
provenant de la g\'eom\'etrie noncommutative}, Inventiones Mathematicae, \textbf{128} (1997), pp. 613--622.

\bibitem[DuKr93]{DuKr93}\textsc{G. Duchamp, D. Krob}, \textit{Free partially commutative structures}, Journal of Algebra \textbf{156} (1993), pp. 318--361.

\bibitem[DucTol09]{DucTol09}
\textsc{G\'erard H. E. Duchamp}, \textsc{Christophe Tollu},
\textit{Sweedler's duals and Sch\"utzenberger's calculus}, In K. Ebrahimi-Fard, M. Marcolli and W. van Suijlekom (eds), Combinatorics and Physics, pp. 67--78, Amer. Math. Soc. (Contemporary Mathematics, vol.~539), 2011.\\
\arxiv{0712.0125v3}.

\bibitem[DMTCN14]{DMTCN14}\textsc{G\'erard H. E. Duchamp, Vincel Hoang Ngoc Minh, Christophe Tollu, B\`ui van Chi\^en, Nguyen Hoang Nghia}, \textit{Combinatorics of $\phi$-deformed stuffle Hopf algebras}, \arxiv{1302.5391v7}.

\bibitem[DuMiNg19]{KSOP}\textsc{G.H.E. Duchamp, V. Hoang Ngoc Minh, Q.H. Ngo}, 
\textit{Kleene stars of the plane, polylogarithms and symmetries},
Theoretical Computer Science \textbf{800} (2019), pp. 52--72, \arxiv{1811.09091v2}.

\bibitem[Ducham15]{Duchamp-MO}
\textsc{G\'erard H. E. Duchamp},
\textit{MathOverflow answer \#216201: Important formulas in combinatorics},
\url{https://mathoverflow.net/a/216201}

\bibitem[Ducham01]{Duchamp-Luque}\textsc{G. Duchamp, M. Flouret, \'E. Laugerotte, J.-G. Luque}, \textit{Direct and dual laws for automata with multiplicities}, Theoretical Computer Science \textbf{267} (2001), pp. 105--120, \arxiv{math/0607412v1}.

\bibitem[Eil74]{Eil} \textsc{S. Eilenberg}, \textit{Automata, languages and machines, vol A}. Acad. Press, New-York, 1974.

\bibitem[Goodea72]{Goodea72}
\textsc{K. R. Goodearl},
\textit{Distributing Tensor Product over Direct Product},
Pacific Journal of Mathematics \textbf{43}, no. 1 (1972),
pp. 107--110.

\bibitem[Grinbe17]{detnotes}\textsc{Darij Grinberg},
\textit{Notes on the combinatorial
fundamentals of algebra}, 15 September 2022,
\arxiv{2008.09862v3}.
\newline A corrected version is
available at \url{https://github.com/darijgr/detnotes/releases/tag/2026-07-02}
.

\bibitem[GriRei20]{GriRei} \textsc{Darij Grinberg, Victor Reiner}, \textit{Hopf algebras
in Combinatorics}, version of 27 July 2020,
\arxiv{1409.8356v7}.
\newline See also
\url{http://www.cip.ifi.lmu.de/~grinberg/algebra/HopfComb-sols.pdf} for a
version that gets updated.

\bibitem[MM65]{MM65}\textsc{J.W. Milnor and J.C. Moore},
\textit{On the structure of Hopf algebras}, Ann. of
Math. (2) \textbf{81} (1965), pp. 211--264.

\bibitem[Mrcun02]{Mrcun}\textsc{Janez Mr\v cun},
\textit{On spectral representation of coalgebras and Hopf algebroids},
\arxiv{math/0208199v1}.

\bibitem[PatReu02]{PatReu02}
\textsc{Fr\'ed\'eric Patras, Christophe Reutenauer},
\textit{On Dynkin and Klyachko Idempotents in Graded Bialgebras},
Advances in Applied Mathematics \textbf{28} (2002), pp. 560--579.

\bibitem[Radfor12]{Radford-Hopf}
\textsc{David E. Radford},
\textit{Hopf algebras},
Series on Knots and Everything \textbf{49}.
World Scientific, 2012.

\bibitem[Reuten93]{FLA}\textsc{Christophe Reutenauer},
\textit{Free Lie Algebras}, London Math. Soc. Monographs, 1993.

\bibitem[RodTaf05]{RodTaf05}
\textsc{Suemi Rodr\'iguez-Romo, Earl J. Taft},
\textit{A left quantum group},
Journal of Algebra \textbf{286} (2005),
pp. 154--160.

\bibitem[Sapir14]{Sapir}
\textsc{Mark V. Sapir},
\textit{Combinatorial Algebra: Syntax and Semantics},
Springer 2014.

\bibitem[Sch\"utz61]{DFA}\textsc{Marcel-Paul Sch\"utzenberger},
\textit{On the definition of a family of automata},
Information and Control, \textbf{4} (1961), pp. 245--270.

\bibitem[Sweedl69]{Sweedl69}
\textsc{Moss E. Sweedler},
\textit{Hopf algebras},
W.A. Benjamin, New York, 1969.

\end{thebibliography}
\end{document}